\theoremstyle{plain}
\newtheorem{theorem}{Theorem}[section]
\newtheorem{corollary}[theorem]{Corollary}
\newtheorem{conjecture}[theorem]{Conjecture}
\newtheorem{lemma}[theorem]{Lemma}
\newtheorem{proposition}[theorem]{Proposition}
\theoremstyle{definition}
\newtheorem{definition}[theorem]{Definition}
\newtheorem{example}[theorem]{Example}
\newtheorem{remark}[theorem]{Remark}
\DeclareMathOperator{\GL}{GL}
\DeclareMathOperator{\SL}{SL}
\DeclareMathOperator{\SU}{SU}
\DeclareMathOperator{\proj}{Proj}
\DeclareMathOperator{\spec}{Spec}
\DeclareMathOperator{\Hom}{Hom}
\DeclareMathOperator{\sym}{Sym}
\DeclareMathOperator{\stab}{Stab}
\DeclareMathOperator{\Aut}{Aut}
\DeclareMathOperator{\Lie}{Lie}
\newcommand{\nocontentsline}[3]{}
\newcommand{\tocless}[2]{\bgroup\let\addcontentsline=\nocontentsline#1{#2}\egroup}
\newcommand{\act}{\curvearrowright}
\newcommand{\CC}{\mathbb{C}}
\newcommand{\PP}{\mathbb{P}}
\newcommand{\A}{\mathbb{A}}
\newcommand{\GG}{\mathbb{G}}
\newcommand{\OO}{\mathcal{O}}
\newcommand{\env}{/\!/}
\newcommand{\ten}{\otimes}
\newcommand{\mc}{\mathcal}
\newcommand{\mf}{\mathfrak}
\newcommand{\mb}{\mathbb}
\newcommand{\kk}{\Bbbk}
\newcommand{\nss}{\mathrm{nss}}
\newcommand{\ssfg}{\mathrm{ss,fg}}
\newcommand{\rms}{s}
\newcommand{\newmmxu}{\mathfrak{m}_{\hat{U},X,\Omega}}
\newcommand{\newmmxh}{\mathfrak{m}_{H,X,\Omega}}
\newcommand{\newmmxg}{\mathfrak{m}_{G,X,\Omega}}
\newcommand{\newmm}{\mathfrak{m}_{G,Y,\Omega}}
\newcommand{\Spec}{{\rm Spec}}
\newcommand{\nc}{\newcommand}
\nc{\bla}{\phantom{bbbbb}}
\newcommand{\beq}{\begin{equation}}
\newcommand{\eeq}{\end{equation}}
\newcommand{\barr}{\begin{array}}
\newcommand{\earr}{\end{array}}
\newcommand{\beqar}{\begin{eqnarray}}
\newcommand{\eeqar}{\end{eqnarray}}
\newcommand{\RR}{{\mathbb R }}
\nc{\FF}{ {\mathbb F} }
\nc{\HH}{ {\mathbb H} }
\newcommand{\QQ}{{\mathbb Q }}
\newcommand{\calf}{\mathcal{F}}
\newcommand{\calr}{\mathcal{R}}
\DeclareMathOperator{\Stab}{Stab}
\nc{\umax}{{U_{\max}}}
\newcommand{\Hilb}{\mathrm{Hilb}}
\newcommand{\CHilb}{\mathrm{CHilb}}
\newcommand{\weight}{\omega}
\newcommand{\reg}{\mathrm{reg}}
\newcommand{\grass}{\mathrm{Grass}}
\newcommand{\bz}{\mathbf{z}}
\newcommand{\flag}{\mathrm{Flag}}
\newcommand{\Diff}{\mathrm{Diff}}
\newcommand{\liek}{{\mathfrak k}}
\newcommand{\lieu}{{\mathfrak u}}
\newcommand{\hU}{\hat{U}}
\newcommand{\lieks}{{\liek}^*}
\newcommand{\liet}{{\mathfrak t}}
\newcommand{\lier}{{\mathfrak r}}
\nc{\lieq}{{\mathfrak q}}
\nc{\lieh}{{\mathfrak h}}
\nc{\liez}{{\mathfrak z}}
\nc{\lieqs}{{\lieq}^*}
\nc{\lieg}{{\mathfrak g}}
\nc{\liev}{{\mathfrak v}}
\nc{\lieb}{{\mathfrak b}}
\nc{\liegs}{{\lieg}^*}
\nc{\liep}{{\mathfrak p}}
\nc{\liew}{{\mathfrak w}}
\nc{\lieps}{{\liep}^*}
\newcommand{\GHilb}{\mathrm{GHilb}}
\newcommand{\Tp}{\mathrm{Tp}}
\def\a{\alpha}
\def\b{\beta}
\def\g{\gamma}
\def\l{\lambda}
\def\x{\xi}
\title{Moment maps and cohomology of non-reductive quotients}
\author{Gergely B\'erczi}
\address{Department of Mathematics, Aarhus University} 
\email{gergely.berczi@math.au.dk}
\author{Frances Kirwan}
\address{Mathematical Institute, University of Oxford} 
\email{frances.kirwan@maths.ox.ac.uk}
\begin{document}

\vspace{.5in}
\maketitle
\begin{center}
{\em In memoriam Michael Atiyah}
\end{center}

\begin{abstract} Let $H$ be a complex linear algebraic group with internally graded unipotent radical acting on a complex projective variety $X$. Given an ample linearisation of the action and an associated Fubini--Study K\"ahler form which is invariant for a maximal compact subgroup $Q$ of $H$, we define a notion of moment map for the action of $H$, and under suitable conditions (that the linearisation is well-adapted and semistability coincides with stability) we describe the (non-reductive) GIT quotient $X/\!/H$ introduced in \cite{BDHK2} in terms of this moment map. Using this description we derive formulas for the Betti numbers of $X/\!/H$ and express the rational cohomology ring of $X/\!/H$ in terms of the rational cohomology ring of the GIT quotient $X/\!/T^H$, where $T^H$ is a maximal torus in $H$. We relate intersection pairings on $X/\!/H$ to intersection pairings on $X/\!/T^H$, obtaining a residue formula for these pairings on $X/\!/H$ analogous to the residue formula of \cite{jeffreykirwan}. As an application, we announce a proof of the Green--Griffiths--Lang and Kobayashi conjectures for projective hypersurfaces with polynomial degree. 
\end{abstract}

\section{Introduction}

The aim of this paper is to extend the results of  \cite{jeffreykirwan,francesthesis,SM} on computing the rational cohomology of GIT quotients of complex projective varieties by reductive group actions to cases when the group which acts is not necessarily reductive. The methods developed in this paper are used in \cite{bkGGL} to prove the Green--Griffiths--Lang conjecture and Kobayashi conjecture on hyperbolicity for generic projective hypersurfaces with polynomial degree, see \S \ref{sec:applications}.

In \cite{BDHK,BDHK2}  projective GIT quotients of complex projective varieties by suitable linear actions of linear algebraic groups with internally graded unipotent radicals are studied. Here we say that a linear algebraic group $H=U\rtimes R$ over $\CC$, with unipotent radical $U$ and Levi subgroup $R$, has internally graded unipotent radical if $R$ has  a central one-parameter subgroup $\lambda:  \GG_m \to Z(R)$ whose weights for the adjoint action on the Lie algebra of $U$ are all strictly positive; here $\GG_m = \CC^*$ is the multiplicative group. When $H$ acts linearly on a projective variety such that semistability coincides with stability in a suitable sense, 
it is shown in \cite{BDHK2} that --- after appropriate adjustment of the linearisation --- many of the key features and computational strengths of reductive GIT  extend to this non-reductive situation. The present paper studies the rational cohomology of these non-reductive GIT quotients. 

When $X$ is a nonsingular complex projective variety acted on by a complex reductive group $G$ with respect to an ample linearisation, we have a quotient $X/\!/G$ in the sense of Mumford's geometric invariant theory (GIT) \cite{GIT}. In this situation we can choose a maximal compact subgroup $K$ of $G$ and a $K$-invariant Fubini--Study K\"ahler form on $X$ with corresponding moment map $\mu:X\to \mathfrak{k}^*$, where $\mathfrak{k}$ is the Lie algebra of $K$ and $\mathfrak{k}^* = \Hom_{\RR}(\mathfrak{k},\RR)$ is its dual. Here $\mathfrak{k}^*$ embeds naturally in the complex dual $\mathfrak{g}^* = \Hom_{\CC}(\mathfrak{k},\CC)$ of the 
Lie algebra $\mathfrak{g} = \mathfrak{k} \otimes \CC$ of $G=K_\CC$, as $\mathfrak{k}^* = \{\xi \in \mathfrak{g}^*: \xi(\mathfrak{k}) \subseteq \RR\}$. Using this identification we can regard $\mu: X \to \mathfrak{g}^*$, or rather the $G$-orbit of $\mu$ parametrised by  the maximal compact subgroups $\{g^{-1}Kg:g \in G\}$ of $G$, as a \lq moment map'  for the action of $G$ (with respect to the $G$-equivariant K\"{a}hler structure on $X$ given by the family of K\"{a}hler forms $\{g^*\omega:g\in G\}$); of course it is not a moment map for $G$ in the traditional sense of symplectic geometry. This point of view does not offer anything very new in the case of reductive group actions, but it allows a useful generalisation of the concept of a moment map to non-reductive complex linear algebraic group actions.

Moment maps have been used since the 1970s to study the geometry and topology of GIT quotients $X/\!/G$ by reductive group actions. The second author in \cite{francesthesis} used the norm-square $f=||\mu||^2$ of the moment map $\mu:X\to \mathfrak{k}^*$ as a  (\lq minimally degenerate') Morse function and showed that the open stratum of the corresponding Morse stratification, which retracts $K$-equivariantly onto the zero level set $\mu^{-1}(0)$ of the moment map, coincides with the GIT semistable locus $X^{ss,G}$ for the linear action of $G$ on $X$. Moreover, $\mu^{-1}(0)$ is $K$-invariant and its inclusion in $X^{ss,G}$ induces a homeomorphism 
\begin{equation}\label{fundamental}
\mu^{-1}(0)/K  \cong X/\!/G.
\end{equation} 
The equivariant Morse inequalities for $||\mu||^2$ relate the equivariant Betti numbers of $X$ to those of $X^{ss,G}$ and the other (unstable) strata. In fact it is shown in  \cite{francesthesis}  that this Morse stratification is equivariantly perfect, so that the equivariant Morse inequalities are equalities, and they provide explicit formulas for the $G$-equivariant Betti numbers of $X^{ss,G}$ in terms of the $G$-equivariant Betti numbers of $X$ and those of the other Morse strata, which can in turn be described inductively in terms of semistable loci for actions of reductive subgroups of $G$ on nonsingular projective subvarieties of $X$. 
 If the stable locus $X^{s,G}$ coincides with $X^{ss,G}$ then the $G$-equivariant rational cohomology of $X^{ss,G}$ is isomorphic to the ordinary rational cohomology of its geometric quotient $X^{ss,G}/G$, which is the GIT quotient $X/\!/G$, and therefore we get expressions for the Betti numbers of $X/\!/G$ in terms of the equivariant Betti numbers of the unstable GIT strata and of $X$.

Martin \cite{SM} used \eqref{fundamental} to relate the topology of the quotient $X/\!/G$  and the associated quotient $X/\!/T$, where $T$ is a maximal (complex) torus of $G$. He proved a formula expressing the rational cohomology ring of $X/\!/G$ in terms of the rational cohomology ring of $X/\!/T$, and an integration formula which expresses intersection pairings on $X/\!/G$ in terms of intersection pairings on $X/\!/T$. This integration formula, combined with methods from abelian localisation, leads to 
residue formulas for cohomology pairings on $X/\!/G$ which are closely related to those of \cite{jeffreykirwan}.

In this paper we follow a similar path for suitable non-reductive actions.  Let $X$ be a nonsingular complex projective variety endowed with a linear action of a complex linear algebraic group $H=U \rtimes R$ with respect to an ample line bundle $L$. Here $U$ is the unipotent radical of $H$, and $R$ is the complexification of a maximal compact subgroup $Q$ of $H$ (so that $R$ and $Q$ are determined up to conjugation in $H$). 
 Assume that the unipotent radical $U$ of $H$ is internally graded by a central 1-parameter subgroup
$\l: \CC^* \to Z(R)$  of the Levi subgroup $R$, and let $\hU=U \rtimes \l(\CC^*) \leqslant H$; then $\l(S^1) \leqslant \l(\CC^*) \leqslant \hU$ is a maximal compact subgroup of $\hU$. Assume also that semistability coincides with stability for the $\hU$-action, in the sense of \cite{BDHK2} (see Definition \ref{def:s=ss} below). In \cite{BDHK2} it is shown that then, after a suitable shift of the linearisation by a (rational) character of $H$ --- we call the resulting linearisation  well-adapted --- the algebra of invariants $\oplus_{k\ge 0}H^0(X,L^{\otimes k})^H$ is finitely generated, and the corresponding projective variety $X/\!/H$ satisfies some of the key properties of Mumford's GIT: (i) it is the set of $S$-equivalence classes for the $H$ action on a semistable locus $X^{ss,H}_{\min+}$, (ii) it contains as an open subvariety a geometric quotient of a stable locus $X^{s,H}_{\min+} \subset X^{ss,H}_{\min+}$, and (iii) the semistable and stable loci are described by Hilbert--Mumford type numerical criteria. In fact it suffices to prove these properties in the case that $H=\hU$, since they then follow in general by applying reductive GIT to the induced linear action of the reductive group $R/\l(\CC^*)$ on $X\env \hU$ and using the isomorphism
$X\env H \cong (X \env \hU)\env(R/\l(\CC^*))$ arising from the identity 
$ \oplus_{k\ge 0}H^0(X,L^{\otimes k})^H = (\oplus_{k\ge 0}H^0(X,L^{\otimes k})^{\hU})^{R/\l(\CC^*}.$

Using the embeddings $X \subseteq \PP^n$ defined by very ample tensor powers of $L$, and corresponding Fubini--Study K\"ahler forms $\omega$ invariant under maximal compact subgroups $Q$ of $H$, with associated $H$-orbits 
$$\Omega = \{ (h^{-1}Qh,h^*\omega):h \in H\}, $$ 
    in this paper we define \lq $\Omega$-moment maps' $$\mathfrak{m}_{H,X,\Omega}:\Omega \times X\to \mathfrak{h}^* = (\mathrm{Lie}H)^* = \Hom_\CC(\mathrm{Lie}H,\CC)$$
 for the action of $H$ on $X$ with respect to $\Omega$,   
     by composing  the restrictions to $X$ of traditional moment maps for unitary groups $K=U(n+1)$ acting on $\PP^n$ such that $Q = K \cap H$ with the inclusions
$\mathfrak{k}^* \to \mathfrak{k}^*_\CC$ and the maps of complex duals $\mathfrak{k}^*_\CC \to \mathfrak{h}^*$ coming from the representations $H \to K_\CC = \GL(n+1)$; cf. 
Remark \ref{GrebMiebach} for links to \cite{grebmiebach}. 

 Let $\weight_{\min} = \weight_0 < \weight_1 <\ldots < \weight_r$ be the weights of the linear action of $\l(\CC^*)$ on $X$, in the sense that $\l(\CC^*)$ acts on the fibres of the line bundle $L^*$ over the fixed point set $X^{\lambda(\CC^*)}$ with these weights. Set 
\[Z_{\min}(X)=\left\{
\begin{array}{c|c}
\multirow{2}{*}{$x \in X$} & \text{$x$ is a $\l(\CC^*)$-fixed point and} \\ 
 & \text{$\l(\CC^*)$ acts on $L^*|_x$ with weight $\weight_{0}$} 
\end{array}
\right\}\]
and
\[X_{\min}^{0}=\left\{
\begin{array}{c|c}
\multirow{2}{*}{$x \in X$} & \text{$\lim_{t\to 0} \l(t)x$ lies } \\ 
 & \text{in $Z_{min}$} 
\end{array}
\right\}.\]
\noindent If $\weight_0 < 0 < \weight_1$ then $X^{s,\l(\CC^*)} = X^{ss,\l(\CC^*)} = X^0_{\min} \setminus Z_{\min}(X)$, and we set $X^{s,\hU}_{\min+} = \bigcap_{u \in U} uX^{s,\l(\CC^*)} = X^0_{\min} \setminus UZ_{\min}(X).$ Provided that the action of $ \l(\CC^*)$ is nontrivial so that $r \geq 1$, the condition that $\weight_0 < 0 < \weight_1$ can be achieved (without changing the action of $H$ on $X$) by multiplying the linearisation by a suitable (rational) character; this corresponds to adding a suitable constant to the $\Omega$-moment map restricted to $\{(K,\omega)\}\times X$ for $(K,\omega) \in \Omega$.
Similarly we make a linearisation  \emph{well-adapted} to the action of  $H$ on $X$ by multiplying it by a rational character so that $0 \in (\weight_0,\weight_1)$ is sufficiently close to $\weight_0$ (see Definition \ref{def:welladaptedaction}). 
Our first theorem is the following non-reductive analogue of \eqref{fundamental}.
\begin{theorem}[\textbf{Moment map description of the GIT quotient}] \label{thm:mainA} 
Let $X\subseteq \PP^n$ be a smooth complex projective variety endowed with a well-adapted action of a complex linear algebraic group $H=U \rtimes R \leqslant \GL(n+1,\CC)$ with internally graded unipotent radical $U$ and grading one-parameter subgroup $\lambda:\CC^* \to Z(R)$. 
Let $\Omega$ be an $H$-equivariant K\"ahler structure on $X$ consisting of Fubini--Study K\"ahler forms $\omega$ invariant under unitary groups $K$ containing maximal compact subgroups $Q = K\cap H$ of $H$, with $\Omega$-moment map $\mathfrak{m}_{H,X,\Omega}:\Omega \times X\to \mathfrak{h}^*$ 
(see Definition \ref{defn:omegaH}).
If $H$-stability coincides with $H$-semistability in the strong sense of Definition \ref{def:s=ss}, then the stable and semistable loci for the $H$-action satisfy
$$ X^{s,H}_{\min+} = X^{ss,H}_{\min+} = \{ x \in X:0 \in \mathfrak{m}_{H,X,\Omega}(\Omega \times \{x\})\} 
= H(\mu_{(K,\omega)}^{H})^{-1}(0) $$
 for any $(K,\omega) \in \Omega$, where $\mu_{(K,\omega)}^H:X \to \mathfrak{h}^*$ sends $x \in X$ to $\mathfrak{m}_{H,X,\Omega}(K,\omega,x)$. Moreover 0 is a regular value of $\mu_{(K,\omega)}^{H}$ and
 the embedding  of $(\mu_{(K,\omega)}^{H})^{-1}(0) $ in $X^{s,H}_{\min+}$ induces a diffeomorphism of orbifolds
\[(\mu_{(K,\omega)}^{H})^{-1}(0)/(K \cap H) \to \{ x \in X:0 \in \mathfrak{m}_{H,X,\Omega}(\Omega \times \{x\})\} /H = X^{s,H}_{\min+}/H = X/\!/H .\] 
\end{theorem}

In the special case when $H=\hU=U \rtimes \lambda(\CC^*)$, this tells us that the embedding $(\mu_{(K,\omega)}^{\hU})^{-1}(0) \hookrightarrow X^{s,\hU}_{\min+}$ induces a 
diffeomorphism of orbifolds  $(\mu_{(K,\omega)}^{\hU})^{-1}(0)/\lambda(S^1) \to X/\!/\hU$. 

Using Theorem \ref{thm:mainA} we prove three theorems about the non-reductive GIT quotient $X\env H$.

\begin{theorem}[\textbf{Betti numbers}]\label{thm:mainB} Let $X$ be a smooth complex projective variety endowed with a well-adapted action of a linear algebraic group $H=U \rtimes R$ with internally graded unipotent radical $U$ and grading one-parameter subgroup $\l:\CC^* \to Z(R)$.  Assume  that semistability coincides with stability for $H$ in the strong sense of  Definition \ref{def:s=ss}. 
Then the stratification $X_{\min}^{0}=X^{s,\hU} \sqcup UZ_{\min}(X)$ is equivariantly perfect for the actions of $\hU = U \rtimes \l(\CC^*)$ and $H$, and the Poincar\'e series of the GIT quotients  $X/\!/\hU$ and $X\env H$ satisfy 
\[P_t(X/\!/\hU)=P_t(Z_{\min}(X))\frac{1-t^{2(\dim(X)-\dim(Z_{\min}(X))-\dim(U))}}{1-t^2}\]
and
\[P_t(X/\!/H)=P_t(Z_{\min}(X)/\!/(R/\l(\CC^*)))\frac{1-t^{2(\dim(X)-\dim(Z_{\min}(X)) - \dim(U))}}{1-t^2}.\]
\end{theorem}
Note that 
 for well-adapted actions on nonsingular varieties we have a distinguished subset $Z_{\min}(X)$ with an action of a reductive group $R/\l(\CC^*)$ which -- as the formulas make clear -- carries most of the cohomological information about $X/\!/\hU$ and $X/\!/H$. 

\begin{remark} 
Generalisations of this theorem and its consequences to situations when semistability does not coincide with stability in the strong sense of Definition \ref{def:s=ss} will be studied in \cite{toappear}.
\end{remark}

In the second half of the paper we adapt the argument of Martin \cite{SM} described above to actions of complex linear algebraic groups with internally graded unipotent radicals, and extend his formulas to express the rational cohomology ring of $X/\!/H$ in terms of the rational cohomology ring of the torus quotient $X/\!/T^H$ where $T^H$ is a maximal (complex) torus of $R$ (and hence of $H$). We study the following key diagram, where $(K,\omega) \in \Omega$ and $T^Q$ is a maximal (compact) torus of  a maximal compact subgroup $Q=K\cap H$ of $R$, with 
complexification $T^H = (T^Q)_{\CC}$:
\begin{equation}\label{diagrammartinintro}
\xymatrix{(\mu_{(K,\omega)}^{H})^{-1}(0)/T^Q \,\,\, \ar@{^{(}->}[r]^-{i} \ar[d]^{\pi} \,\,\, & (\mu_{(K,\omega)}^{T^H})^{-1}(0)/T^Q=X/\!/T^H  \\ 
X/\!/H}.
\end{equation}
Here $\pi$ is the composition of the  
 diffeomorphism 
$(\mu_{(K,\omega)}^{H})^{-1}(0)/Q \to X/\!/H$ induced by the 
 embedding  of $(\mu_{(K,\omega)}^{H})^{-1}(0)$ in $X^{ss,H}_{\min+}$ with the natural map $(\mu_{(K,\omega)}^{H})^{-1}(0)/T^Q \to (\mu_{(K,\omega)}^{H})^{-1}(0)/Q$.

\begin{theorem}[\textbf{Cohomology rings}]\label{thm:maincohomology} Let $X$ be a smooth complex projective variety endowed with a well-adapted action of $H=U \rtimes R$ such that $H$-stability=$H$-semistability holds in the strong sense of Definition \ref{def:s=ss}. Then there is a natural ring isomorphism
\[H^*(X/\!/H,\QQ)\simeq \frac{H^*(X/\!/T^H,\QQ)^W}{ann(e)}\]
where $W$ denotes the Weyl group of $R$, which acts naturally on $X/\!/T^H$, while $e=\mathrm{Euler}(V^*) \in H^*(X/\!/T^H)^W$ is the Euler class of the dual $V^*$ of the bundle associated to the roots of $R$ and weights of the adjoint action of $T^H$ on $\Lie(U)$ (see Definition  \ref{def:bundles} for details), and
\[\mathrm{ann}(e)=\{c \in H^*(X/\!/T,\QQ)\, |\, c \cup e=0\} \subset H^*(X/\!/T^H,\QQ).\]
is the annihilator ideal. 
\end{theorem}

Diagram \eqref{diagrammartinintro} provides a natural way to define a lift of a cohomology class on $X/\!/H$ to a class on $X/\!/T^H$: we say that $\tilde{a}\in H^*(X/\!/T^H)$ is a lift of $a\in H^*(X/\!/H)$ if $\pi^*a=i^*\tilde{a}$.

\begin{theorem}[\textbf{Integration formula}]\label{thm:mainintegration} Let $X$ be a smooth projective variety endowed with a well-adapted action of $H=U \rtimes R$ such that $H$-stability=$H$-semistability holds  in the strong sense of Definition \ref{def:s=ss}. 
 Given a cohomology class $a \in H^*(X/\!/H)$ with a lift $\tilde{a}\in H^*(X/\!/T^H)$, then 
\[\int_{X/\!/H}a=\frac{n_0}{|W|}\int_{X/\!/T^H} \tilde{a} \cup e\]
where we use the notation of Theorem \ref{thm:maincohomology}, and $n_0>0$ is determined by  the sizes of the stabilisers in $H$ and $T^H$ of a generic $x \in X$. 
\end{theorem}

Using these results, we prove a residue formula for intersection  pairings on $X/\!/H$; see Theorem \ref{jeffreykirwannonred}.

The layout of the paper is as follows. We start in \S \ref{sec:applications} with an outline of some applications of our results, followed in \S \ref{sec:redGIT} and \S \ref{sec:nrgit} by  brief overviews of the relationship between classical GIT and moment maps and then   of 
 non-reductive GIT and results of \cite{BDHK,BDHK2} which will be needed later, proving a modified version (Theorem \ref{mainthm}) and a generalisation (Theorem \ref{mainthmextended}) of the main result  of \cite{BDHK2}. In \S \ref{sec:momentmaps} we define moment maps for linear actions of complex linear algebraic groups with internally graded unipotent radicals and prove  Theorem \ref{thm:mainA}. 
  In \S \ref{sec:betti} we 
  prove Theorem \ref{thm:mainB}, while  \S \ref{sec:martin} uses the topological argument of \cite{SM} in combination with Theorem \ref{thm:mainA} to prove Theorems \ref{thm:maincohomology} and \ref{thm:mainintegration}.  
  
The authors would like to thank the anonymous referees for very helpful comments on earlier versions of this paper.

\section{Applications}\label{sec:applications}

The results of this paper, combined with ideas discussed in \cite{BDHK0},
provide machinery for studying cohomological intersection theory on moduli spaces constructed as quotients of suitable quasi-projective varieties by non-reductive linear algebraic group actions, even without assumptions that semistability coincides with stability. When a moduli space can be described as an open subset of a projective variety $X$ up to a symmetry group $H=U \rtimes R$ with internally graded unipotent radical $U$, it is often possible as in \cite{BDHK0} to construct a non-reductive GIT quotient $\tilde{X}/\!/H$ of an $H$-equivariant blow-up $\tilde{X}$ of $X$ for which semistability does coincide with stability, giving a projective completion of the moduli space which is canonically associated to the linear action of $H$ on $X$. When $X$ is nonsingular $\tilde{X}$ can be chosen to be nonsingular, with the induced action of $H$ on $\tilde{X}$ well-adapted and $H$-semistability coinciding with $H$-stability. Then Theorems \ref{thm:mainA}--\ref{thm:mainintegration} tell us that 
this projective completion
$\tilde{X}/\!/H$ has the best cohomological intersection theory over $\QQ$ we can hope for: as in the case of quotients by reductive groups \cite{francesthesis,SM} its rational cohomology and cohomological intersection numbers can be computed from the $T$-action on the normal bundles to the connected components of the $T$-fixed point set, where $T \subset R$ is a maximal torus in $H$. 

As a first important application of this theory, in this section we outline the proof of the polynomial Green--Griffiths--Lang and polynomial Kobayashi conjectures, which is given in detail  in the companion paper \cite{bkGGL}.

A projective variety $X$ is called Brody hyperbolic if there is no non-constant entire holomorphic curve in $X$; i.e. any holomorphic map $f: \CC \to X$ must be constant. Hyperbolic algebraic varieties have attracted considerable attention, in part because of their conjectured diophantine properties. For instance, Lang \cite{lang} has conjectured that any hyperbolic complex projective variety over a number field $K$ can contain only finitely many rational points over $K$. In 1970 Kobayashi \cite{kob2} formulated the following conjecture.
\begin{conjecture}[\textbf{Kobayashi conjecture, 1970}]
A generic hypersurface $X\subseteq \PP^{n+1}$ of degree $d_n$ is Brody hyperbolic if $d_n$ is sufficiently large.
\end{conjecture}

\begin{remark}
This is a slightly stronger version of Kobayashi's original conjecture in which \lq very general' replaced \lq generic'.
\end{remark}

This conjecture has a vast literature, and for more details on recent results we recommend the survey papers \cite{demsurvey,dr}. The conjectured optimal degree bound is $d_1=4, d_n=2n+1$ for $n=2,3,4$ and $d_n=2n$ for $n\ge 5$, see \cite{demsurvey}. Siu \cite{siu4} and Brotbek \cite{brotbek} proved  Kobayashi hyperbolicity for projective hypersurfaces of sufficiently high (but not effective) degree. Based on the work of Brotbek effective degree bounds were worked out by Deng \cite{deng}, Demailly \cite{demsurvey}, Merker and The-Anh Ta \cite{merker2}. The best known bound based on these existing techniques is $(n \log n)^n$.     

A related conjecture is the Green--Griffiths--Lang (GGL) conjecture formulated in 1979 by Green and Griffiths \cite{gg} and in 1986 by Lang \cite{lang}. 
\begin{conjecture}[\textbf{Green-Griffiths-Lang conjecture, 1979}]
Any projective algebraic variety $X$ of general type contains a proper algebraic subvariety $Y\subsetneqq X$ such that every
nonconstant entire holomorphic curve $f:\CC \to X$ satisfies $f(\CC) \subseteq Y$. 
\end{conjecture}

In particular, a generic projective hypersurface $X\subseteq \PP^{n+1}$ is of general type if $\deg(X)\ge n+3$.    
The GGL conjecture has been proved for surfaces by McQuillan \cite{mcquillan} under the assumption that the second Segre number $c^2_1-c_2$ is positive. Following the strategy developed by Demailly \cite{dem} and Siu \cite{siu1,siu2,siu3,siu4}  for generic hypersurfaces $X\subseteq \PP^{n+1}$ of high degree, and using techniques of Demailly \cite{dem}, the first effective lower bound for the degree of a generic hypersurface in the GGL conjecture was given by Diverio, Merker and Rousseau \cite{dmr}, where the conjecture for generic projective hypersurfaces $X\subseteq \PP^{n+1}$ of degree $\deg(X)>2^{n^5}$ was confirmed. 
The current best bound for the Green--Griffiths--Lang Conjecture is $\deg(X)>(\sqrt{n}\log n)^n$ due to Merker and The-Anh Ta \cite{merker2}. 

The two hyperbolicity conjectures are strongly related: Riedl and Yang \cite{riedl} recently showed that if there are integers $d_n$ for all positive $n$ such that the GGL conjecture for hypersurfaces of dimension $n$ holds for degree at least $d_n$ then the Kobayashi conjecture is true for hypersurfaces with degree at least $d_{2n-1}$.

The key object in Demailly's strategy is the jet bundle $\pi_k:J_kX \to X$ of order $k$, whose elements are holomorphic $k$-jets of maps $f: (\CC,0) \to X$ with $\pi_k(f)=f(0)$, and its open subset $(J_kX)^{\circ}$ consisting of $f: (\CC,0) \to X$ with $f'(0) \neq 0$. The fibre of $J_kX$ at $x \in X$ is the vector space formed by $k$-tuples $(f'(0),f''(0),\ldots, f^{(k)}(0))$ of derivatives for holomorphic $f: \CC \to X$ with $f(0)=x$. Polynomial reparametrisations $(\CC,0)\to (\CC,0)$ of order $k$ with nonzero first derivative form a group $\Diff_k$, and this group acts on $J_kX$ by reparametrisation of jets: $\alpha \in \Diff_k$ sends $f$ to $f \circ \alpha$.  The quotient stack $[J_kX/\Diff_k]$ is the moduli stack of $k$-jets in $X$, and it plays a central role in several classical problems in geometry, and in particular in hyperbolicity. 

The main technical difficulty which arises in the study of $[J_kX/\Diff_k]$ is that the reparametrisation group $\Diff_k$ is not reductive. 
In \cite{bkGGL} we use the non-reductive GIT quotient $\tilde{J_kX}/\!/\Diff_k(1)$  of a suitable blow-up of a compactification of $J_kX$, together with sections of the tautological bundle over $\tilde{J_kX}/\!/\Diff_k(1)$, which give (semi-)invariant jet differentials. We use the cohomological intersection theory developed in this paper to prove the hyperbolicity conjectures with polynomial degree bound.

\begin{theorem}[\textbf{Polynomial Green-Griffiths-Lang theorem for projective hypersurfaces \cite{bkGGL}}] \label{mainthmtwo}
Let $X\subseteq \PP^{n+1}$ be a generic smooth projective hypersurface
of degree $\deg(X)\ge 16n^3(5n+4)$. Then there is a proper algebraic subvariety $Y\subsetneqq X$ containing all nonconstant entire holomorphic curves in $X$. 
\end{theorem}  

Using the recent results of Riedl and Yang \cite{riedl} this implies 

\begin{theorem}[\textbf{Polynomial Kobayashi theorem \cite{bkGGL}}] \label{mainthmtwob}
A generic smooth projective hypersurface $X\subseteq \PP^{n+1}$  
of degree $\deg(X)\ge 16(2n-1)^3(10n-1)$ is Brody hyperbolic. 
\end{theorem}

\begin{remark}
Polynomial reparametrisation groups and jet moduli spaces play a central role not just in hyperbolicity questions, but in several other classical enumerative geometry problems. The reason for this is that the curvilinear component of the Hilbert scheme of points on a smooth variety naturally forms a compactification of a  jet moduli space constructed as a non-reductive quotient as explained below. Here we collect the main directions of ongoing work which use the results of this paper. 

\noindent (i) \textsl{Thom polynomials.} 
Let $f: N \to M$ be a holomorphic map between two complex manifolds, $N$ and $M$, of dimensions $n\le m$, and let $A$ be a complex nilpotent algebra, finite dimensional over $\CC$.  A fundamental problem in global singularity theory is the characterisation of the locus $\Sigma_A$ in $N$ where $f$ has local algebra $A$ (see \cite{arnold}). Assuming $N$
is compact and $f$ is sufficiently generic, the closure $\overline{\Sigma_A}$ is an analytic subvariety representing a homology cycle. Thom's pioneering work \cite{thom} in the 50's showed that the Poincar\'e dual class $[\overline{\Sigma_A}]\in H^*(N,\mathbb{Z})$ is a polynomial $\Tp_A^{n,m}(c_1,c_2,\ldots)$ in the Chern classes of the difference bundle $TN-f^*TM$. This is now called \textit{the Thom polynomial of the singularity}; its full description has been a major open problem ever since then (see \cite{rimanyi,kazarian2}). 
In \cite{bsz} the Morin case is studied; this is the case  when $A=t\CC[t]/t^{k+1}$ for some $k$. 

Let $J_k(1,n)$ denote the space of $k$-jets of holomorphic map germs $(\mathbb{C},0) \to (\mathbb{C}^n,0)$. This space is acted on by the polynomial reparametrisation group $\Diff_k=J_k^{reg}(1,1)$ formed by jets with nonzero first derivative. The main message of \cite{bsz} is that the {Thom polynomial can be regarded as an equivariant intersection number on a projective compactification of the quasi-projective non-reductive quotient $J_k(1,n)/\Diff_k$}. 

In \cite{b0} the first author showed that the projective completion of $J_k(1,n)/\Diff_k$ used in \cite{bsz} for Thom polynomials is (a partial blow-up of) the curvilinear component $\CHilb^{k+1}(\CC^n)$ of the punctual Hilbert scheme of points on $\CC^n$. In \cite{berczithom} the relationship is explored between the two birational compactifications of $J_k(1,n)^\circ /\Diff_k$ given by $\CHilb^{k+1}(\CC^n)$ and the non-reductive GIT quotient $J_k(1,n)/\!/\Diff_k$. In particular, a blow-up $\widehat{J_k(1,n)/\!/\Diff_k}$ of the GIT quotient is constructed with a birational morphism $\widehat{J_k(1,n)/\!/\Diff_k} \to \CHilb^{k+1}(\CC^n)$ so that the integral formulas of the present paper lead to new Thom-polynomial formulas (\cite{berczithom}).

\noindent (ii) \textsl{Multisingularity classes.} These are multipoint versions of Thom polynomials, but our knowledge about them is much more limited, because the study of their geometry naturally involves some understanding of the full Hilbert scheme of $M$, not just its curvilinear component. To every holomorphic map $f:N\to M$ and tuple of nilpotent algebras $A=(A_1,\ldots, A_s)$ one can assign the multisingularity cycle $\Sigma_A$, sitting in the Cartesian product $N^s=N\times \ldots \times N$ as the closure of the set of tuples $(x_1,\ldots, x_s) \in  N^s$ of pairwise distinct points such that $f(x_1)=\ldots =f(x_s)$ and $f$ has local algebra isomorphic to $A_i$ at $x_i$ for $i=1,\ldots, s$. The multisingularity class $n_A \in H^*(N)$ is the dual of the image of $\Sigma_A$ under the projection of $N^s$ to the first factor. Multipoint formulas (or $s$-fold point formulas) represent the simplest situation when $A_i$ is trivial for $i=1, \ldots, s$ and $n_A=n_s$ is the $s$-fold locus of the map. 
Computation of multisingularity classes is a classical problem (see e.g \cite{kleiman1,kleiman2,katz,kazarian,rimanyi2}). Kazarian \cite{kazarian} and Rim\'anyi \cite{rimanyi2} formulate several important structure theorems and conjectures.

 In \cite{bsz3}  multipoint classes are related to certain tautological integrals over the main component $\GHilb^k(N)$ (which is the closure of the open locus where the $k$ points are different), and we reduce integration to the curvilinear component $\CHilb^k(N)$. As a result,  iterated residue formulas for multisingularity classes are developed, and conjectures of Kazarian and Rim\'anyi on the structure of these polynomials are proved.

\noindent(iii)  \textsl{Tautological integrals on Hilbert scheme of points in higher dimension.}
The Hilbert scheme of $k$ points $\Hilb^k(X)$ on a smooth variety $X$ plays a central role in several classical problems in enumerative geometry. When $X=S$ is a surface, the Hilbert scheme is a nonsingular $2k$ dimensional variety, which has been extensively studied over the last 50 years; it occupies a central position in many important branches of mathematics and physics (see e.g \cite{nakajima}). While Hilbert schemes on surfaces are well-understood and broadly studied, the Hilbert schemes of points on manifolds of dimension three or higher are extremely wild objects with several unknown irreducible components and bad singularities \cite{vakil}, and our understanding of them is very limited. 

However, for most enumerative geometry applications it is enough to study the main component $\GHilb^k(X)$. 
 For example, according to G\"ottsche \cite{gottsche}, tautological integrals on certain geometric subsets of $\GHilb^k(X)$ give counts of hypersurfaces in generic linear systems over $X$ with given sets of singularities. A special case of these integrals gives the famous nodal curve count formula of G\"ottsche on surfaces. 

Unfortunately, the punctual part of $\GHilb^k(X)$ is not irreducible, but the curvilinear component $\CHilb^k(X)$ turns out to be a distinguished component. The theory developed in  \cite{berczitau2,berczitau3,berczitau4} reduces integration over $\GHilb^k(X)$ to integration over $\CHilb^{k}(X)$. As we have seen, this latter is a natural compactification of the non-reductive quotient $J_{k-1}X/\Diff_k$, and the results of the present paper are used in \cite{berczithom} to derive tautological integral formulas over $\CHilb^{k}(X)$.
 
 \noindent (iv) \textsl{Moduli and cohomology of unstable Higgs bundles.} Ongoing work studies the cohomology of moduli spaces of unstable Higgs bundles of low rank constructed as non-reductive GIT quotients \cite{hamilton}. 
\end{remark}

\section{Reductive GIT and moment maps}\label{sec:redGIT}

Mumford's geometric invariant theory (GIT) \cite{GIT} for reductive group actions in (complex) algebraic geometry is closely related to the construction of quotients for compact group actions in symplectic geometry via symplectic reduction \cite{MarsdenWeinstein}. In this section we will recall how a GIT quotient of a complex projective variety $X$ by a complex reductive group action which is linear with respect to an ample line bundle $L$ (in the sense that the action on $X$ lifts to a linear action on $L$) can be identified with a symplectic reduction of $X$ by a maximal compact subgroup.

Suppose that a compact Lie group $K$ with Lie algebra $\mathfrak{k}$ acts smoothly on a symplectic manifold $X$ preserving its symplectic form $\omega$. For simplicity assume that $K$ is connected. If the vector field determining the infinitesimal action of $a \in \mathfrak{k}$ on $X$ is denoted by $x \mapsto a_x$, then a moment map for the action of $K$ on $X$ is a smooth $K$-equivariant map $\mu: X \to \liek^*$ satisfying
$$ d\mu(x)(\xi).a = \omega_x(\xi,a_x)$$
for all $x \in X, \xi \in T_xX$ and $a \in \liek$; that is, the component $\mu_a:X \to \RR$ given by $x \mapsto \mu(x)\cdot a$ of $\mu$ along any $a\in \liek$ is a Hamiltonian function of the vector field $x \mapsto a_x$ on $X$ induced by $a$. This means that if $X$ is K\"ahler then the gradient flow of $\mu_a$ is given by
\begin{equation} \label{doubledagger} \mathrm{grad} \mu_a (x) = i a_x \end{equation}
for all $a \in \liek$.

\begin{remark} \label{morsebott} If $a\in \liek$ then the component $\mu_a:X \to \RR$ of $\mu$ along $a$ is a Morse--Bott function on $X$ (see \cite{atiyah}); that is, the connected components of the set of its critical points (which is just the fixed point set of the action on $X$ of the subtorus  $T_a = \overline{\exp(\RR a)}$ of $K$ generated by $\exp(\RR a)$) are submanifolds of $X$, and the restriction of the Hessian of $\mu_a$ at any point $p$ of one of these connected components $C$ to the normal to $C$ is nondegenerate. Indeed we can find Darboux coordinates $x=(x_1, y_1, \ldots ,x_{m},y_m)$ in a neighbourhood of $p$ with $x(p)=0$, such that $C$ is given locally by 
$\{x: x_j = y_j = 0 \mbox{ when } j>k   
 \}$,
the torus $T_a$ acts linearly with weights $\chi_1,\ldots,\chi_m$ where $\chi_j = 0$ if and only if  $j>k$, and locally $\mu_a$ is given in these coordinates by
$$ \mu_a(x) = \mu_a(0) + (\chi_1\cdot a) (x_1^2 + y_1^2) + \cdots + (\chi_k \cdot a) (x_k^2 + y_k^2).$$
In particular note that if $\mu_a$ takes its minimum value on $C$,  then $ \mu_a(x) = \mu_a(0) + (\chi_1\cdot a) (x_1^2 + y_1^2) + \cdots + (\chi_m \cdot a) (x_k^2 + y_k^2)$ where $ \chi_j \cdot a > 0$ for $j=1, \ldots , k$.
\end{remark} 

If the stabiliser $K_\zeta$ of $\zeta \in \liek^*$ with respect to the coadjoint action of $K$ acts freely on $\mu^{-1}(\zeta)$, then $\mu^{-1}(\zeta)$ is a submanifold of $X$ and the restriction to $\mu^{-1}(\zeta)$ of the symplectic form $\omega$ becomes degenerate along the orbit directions and induces a symplectic structure on $\mu^{-1}(\zeta)/K$ \cite{MarsdenWeinstein}. When the action of $K_\zeta$ on $\mu^{-1}(\zeta)$ is not assumed to be free then the symplectic reduction $\mu^{-1}(\zeta)/K_{\zeta}$ at $\zeta$ of the $K$-action on $X$ may have singularities but still inherits a stratified symplectic structure \cite{Sjamaar}. The reduction $\mu^{-1}(0)/K_{\zeta}$ at 0 is sometimes called the symplectic quotient of $X$ by the action of $K$; when $K$ acts on $\mu^{-1}(0)$ with at worst finite stabilisers (or equivalently 0 is a regular value of $\mu$) then the symplectic quotient has only orbifold singularities.

Now suppose that $X$ is a complex projective variety (which for simplicity can be assumed to be nonsingular and connected) embedded in a complex projective space $\PP^n$, and let $G$ be a complex connected Lie group acting on $X$ via a complex linear representation $\rho:G \to \GL(n+1)$. Assume that $G$ is reductive; equivalently $G=K_\CC$ is the complexification of  a maximal compact subgroup $K$. Examples are given by the complexification $C^*$ of the circle $S^1$, and more generally by the complexifications $\GL(m)$ and $\SL(m)$ of the unitary and special unitary groups $\mathrm{U}(m)$ and $\SU(m)$. By choosing coordinates on $\PP^n$ appropriately we can assume that K acts unitarily, so that $\rho(K) \subseteq \mathrm{U}(n+1)$ and the standard Fubini--Study K\"ahler form $\omega$ on $\PP^n$ restricts to a $K$-invariant K\"ahler form on $X$. There is a moment map $\mu:X \to \lieks$ given (up to multiplication by a constant scalar depending on conventions) for $a \in \lieks$ by
\begin{equation} \label{mommap} \mu([x_0:\cdots :x_n]).a = \frac{(\bar{x}_0,\ldots , \bar{x}_n) \rho_*(a) (x_0,\ldots ,x_n)^T   }{ 2 \pi i || (x_0,\ldots ,x_n)||^2  },\end{equation}
and thus a symplectic quotient $\mu^{-1}(0)/K$.

Mumford's GIT \cite{GIT} was developed to construct quotients of algebraic varieties by reductive group actions in the presence of ample linearisations. Here an ample linearisation for an action of a reductive group $G$ on a projective variety $X$ is a lift of the action of $G$ on $X$ to an ample line bundle $L$. Taking sections of a sufficiently large power of $L$ gives us an embedding of $X$ in a  projective space $\PP^n$ such that $G$ acts on $X$ via a linear representation $\rho:G \to \GL(n+1)$ as above. There is an induced action of $G$ on the finitely generated graded algebra
$$\mathcal{O}_L(X) = \bigoplus_{k \geqslant 0} H^0(X,L^{\otimes k}),$$
and the subalgebra $\mathcal{O}_L(X)^G$ of $G$-invariants is itself a finitely generated graded algebra since $G$ is reductive. So we can define the GIT quotient $X/\!/G$ to be the projective variety $$ X/\!/G = \mathrm{Proj}(\mathcal{O}_L(X)^G)$$ associated to this algebra of invariants.

A categorical quotient of a variety $Y$ for an action of $G$ is a $G$-invariant morphism $\phi:Y \to Z$ such that any other $G$-invariant  morphism $\psi:Y \to W$ factors uniquely through $\phi$ (see e.g. \cite{Newstead} 2 \S 4). An orbit space for the action is a categorical quotient $\phi:Y \to Z$ such that every fibre of $\phi$ is a single $G$-orbit, and a geometric quotient is an affine morphism $\phi:Y \to Z$ which is an orbit space and satisfies

(i) if $U$ is open in $Z$ then $\phi^*:\mathcal{O}(U) \to \mathcal{O}(\phi^{-1}(U))$ induces an isomorphism of the algebra of regular functions $\mathcal{O}(U)$ on $U$ onto $\mathcal{O}(\phi^{-1}(U))^G$, and

(ii) if $W_1$ and $W_2$ are disjoint closed $G$-invariant subvarieties of $Y$ then their images under $\phi$ are disjoint closed subvarieties of $Z$.

When $G$ acts linearly on a projective variety $X$ as above, then the inclusion of the algebra of invariants $\mathcal{O}_L(X)^G$ in $\mathcal{O}_L(X)$ determines a rational map $\phi $ from $X$ to $X/\!/G$, but usually there will be points of $X$ where every $G$-invariant polynomial vanishes and so this rational map will not be well-defined everywhere on $X$. So we define the set $X^{ss}$ of \emph{semistable} points in $X$ to be the set of those $x \in X$ for which there exists some $f \in \mathcal{O}_L(X)^G$ which does not vanish at $x$, and then the rational map $\phi $ restricts to a surjective $G$-invariant morphism
$$\phi :X^{ss} \to X/\!/G$$
which is a categorical quotient for the action of $G$ on $X^{ss}$. It is not necessarily an orbit space; in fact when $x$ and $y$ lie in $X^{ss}$ then $\phi (x) = \phi (y)$ if and only if the closures of their $G$-orbits meet in $X^{ss}$. Every point of $X/\!/G$ is represented by a unique semistable $G$-orbit which is closed in $X^{ss}$.
A \emph{stable} point of $X$ (called \lq properly stable' in \cite{GIT}) is $x \in X^{ss}$ with a $G$-invariant open neighbourhood $U$ in $X^{ss}$ such that every $G$-orbit in $U$ is closed in $X^{ss}$ and has dimension $\dim G$. If $U$ is any $G$-invariant open subset of the set $X^s$ of stable points of $X$, then $\phi (U)$ is open in $X/\!/G$ and $\phi |_U:U \to \phi (U)$ is a geometric quotient for the action of $G$ on $U$. Thus $\phi (X^s) = X^s/G$ is a geometric quotient for the action of $G$ on $X^s$, and (provided that $X^s \neq \emptyset$ and $X$ is irreducible) the projective variety $X/\!/G$ is a compactification of $X^s/G$.

\begin{remark} \label{rational} It is useful to note that $X^s$, $X^{ss}$ and $X/\!/G$ are unchanged if the line bundle $L$ is replaced by a positive tensor power of itself, so we can formally work with rational linearisations $L^{\otimes \ell/m}$ for positive integers $\ell$ and $m$. 
\end{remark}

The stable and semistable loci 
$X^s$ and $X^{ss}$ are determined by the \emph{Hilbert--Mumford criteria} \cite{GIT,Newstead}, which can be expressed in the following form.

\begin{theorem} \label{HilbMum} Let $T$ be a maximal torus of $G$. Then 
\begin{enumerate}[(i)]
\item $x \in X$ is semistable (respectively stable) for the action of $G$ on $X$ if and only if every $gx$  in the $G$-orbit of $x$ is semistable (respectively stable) for the action of $T$ on $X$;

\item $x = [x_0: \ldots : x_n] \in X \subseteq \PP^n$ is semistable (respectively stable) for the action of $T$ acting diagonally on $\PP^n$ with weights $\alpha_0,\ldots,\alpha_n$ if and only if the convex hull
$\mathrm{Conv} \{\alpha_i: x_i \neq 0\}$
contains 0 (respectively contains 0 in its interior).
\end{enumerate}
\end{theorem}
Recall also that the categorical quotient map $\phi :X^{ss} \to X/\!/G$ is surjective with the property that if $x, y \in X^{ss}$ then $\phi (x) = \phi (y)$ if and only if the closures of the $G$-orbits of $x$ and $y$ meet in $X^{ss}$, and that every point of $X/\!/G$ is represented by a unique semistable $G$-orbit which is closed in $X^{ss}$; moreover $X/\!/G$ is a projective variety containing the geometric quotient $X^s/G$ as an open subset. 

The GIT quotient $X/\!/G$ and the stable and semistable loci 
$X^s$ and $X^{ss}$ can also be described in terms of the moment map $\mu:X \to \lieks$ given at (\ref{mommap}) 
\cite{francesthesis}. Any $x \in X$ is semistable if and only if the closure of its $G$-orbit meets $\mu^{-1}(0)$, and $x$ is stable if and only if its $G$-orbit meets the regular part $\mu^{-1}(0)_{\mathrm{reg}}$ of $\mu^{-1}(0)$ where the derivative of $\mu$ is surjective. The inclusions of $\mu^{-1}(0)$ into $X^{ss}$ and of $\mu^{-1}(0)_{\mathrm{reg}}$ into $X^s$ induce homeomorphisms 
\begin{equation} \label{reductivemmap} \mu^{-1}(0)/K \to X/\!/G \mbox{ and  $\mu^{-1}(0)_{\mathrm{reg}}/K \to X^s/G$.} \end{equation}

\begin{remark} \label{VGIT} The GIT quotient $X/\!/G$ depends on the choice of an ample linearisation of the $G$-action on $X$. The theory of variation of GIT (VGIT) \cite{Dolg,Thaddeus} describes the way in which the quotient changes when the linearisation is varied. The space of possible (rational) linearisations can be identified with a convex subset of a 
 vector space over $\QQ$, which is divided into convex \lq chambers' by walls in such a way that the GIT quotient is constant up to isomorphism in the interior of any chamber, and when a wall between chambers is crossed then the GIT quotient undergoes a particular kind of  birational transformation (sometimes called a Thaddeus flip). 
Understanding this picture reduces to the simple case when $G$ is the multiplicative group $\CC^*$ and we change the linearisation by multiplying it by a (rational) character $\chi: G \to \CC^*$ of $G$ \cite{Thaddeus}; it follows from (\ref{mommap}) that this corresponds to adding a constant to the moment map for the action of $K = S^1$.
Suppose that $\CC^*$ acts diagonally on $X \subseteq \PP^n$ with weights $r_0, \ldots, r_n$.
Then by the Hilbert--Mumford criteria (Theorem \ref{HilbMum}) the semistable locus is
$$\{[x_0:\ldots:x_n] \in X : \exists i,j \mbox{ such that } x_i \neq 0 \neq x_j \mbox{ and } r_i \leq 0 \leq r_j \}$$
while the stable locus is
$$\{[x_0:\ldots:x_n] \in X : \exists i,j \mbox{ such that } x_i \neq 0 \neq x_j \mbox{ and } r_i < 0 < r_j \},$$
and multiplying the linearisation by a rational character $s \in \QQ$ corresponds to replacing each $r_j$ with $r_j + s$. Thus the space of linearisations obtained in this way can be identified with $\QQ$, and it is decomposed by finitely many walls (which are just points) into finitely many intervals such that in the interior of each interval $X^{ss} = X^s$ and $X/\!/G = X^s/G$ are constant (and empty on the unbounded intervals). The semistable locus on a wall is the union of the semistable (or equivalently stable) loci on each side of the wall, whereas the stable locus on a wall is the intersection of the stable loci on each side. The inclusion of the stable locus on one side of a wall into the semistable locus on the wall induces a surjection morphism on the GIT quotients; this fails to be an isomorphism because $G$-orbits which were stable away from the wall become identified with $G$-orbits in their closures which are semistable on the wall.
\end{remark}

\section{Non-reductive GIT}\label{sec:nrgit}
This section is a brief summary of results in \cite{BDHK0,BDHK,BDHK2}, with the main focus on \cite{BDHK2} which is most relevant to this paper. We also extend the main result  of \cite{BDHK2} (Theorem \ref{mainthm} below) to a more general version (Theorem \ref{mainthmextended} below).

Let $H=U \rtimes R$ be a linear algebraic group acting linearly with respect to an ample line bundle $L$ on a projective variety  $X$ over an algebraically closed field $\kk$ of characteristic 0, which for the purposes of this paper we can take to be $\CC$. Here $U$ is the unipotent radical of $H$ and $R\cong H/U$ is a Levi subgroup of $H$. 
When $H=R$ is reductive, using classical geometric invariant theory (GIT) developed by Mumford in the 1960s \cite{GIT} as in $\S$\ref{sec:redGIT}, we can find $H$-invariant open subsets $X^s \subseteq X^{ss}$ of $X$ with a geometric quotient $X^s/H$ and  projective completion $X/\!/H \supseteq X^s/H$ which is the projective variety associated to the algebra  of invariants
 $\bigoplus_{k \geq 0} H^{0}(X,L^{\otimes k})^H$. 
The variety $X/\!/H$ is the image
of a surjective morphism $\phi$ from the open subset $X^{ss}$ of $X$ (consisting of the
semistable points for the action, while $X^s$ consists of the stable points)  such that if $x,y \in X^{ss}$ then $\phi(x) = \phi(y)$ if and only if the closures of the $H$-orbits of $x$ and $y$ meet in $X^{ss}$ (that is, $x$ and $y$ are \lq S-equivalent').  Moreover the subsets $X^s$ and $X^{ss}$ can be described using the Hilbert--Mumford criteria for (semi)stability (see Theorem \ref{HilbMum} in $\S$\ref{sec:redGIT}). 

Some aspects of  Mumford's GIT can be made to work when $H$ is not reductive (cf. for example 
\cite{BDHK,dorankirwan,F2,F1,GP1,GP2,KPEN,W}), although it cannot be extended directly to non-reductive linear algebraic group actions since the algebra of invariants $\bigoplus_{k \geq 0} H^{0}(X,L^{\otimes k})^H$ is not necessarily finitely generated as a graded algebra when $H$ is not reductive. 
We can still define (semi)stable subsets $X^{ss}$ and $X^s$, the latter having a geometric quotient $X^s/H$ which is an open subset of an \lq enveloping quotient' $X\env H$ with an $H$-invariant morphism $\phi: X^{ss} \to X\env H$, and if the algebra of invariants 
 $\bigoplus_{k \geq 0} H^{0}(X,L^{\otimes k})^H$ is finitely generated then $X\env H$ is the associated projective variety
 \cite{BDHK,dorankirwan}. However in general the enveloping quotient $X\env H$ is not necessarily projective, the morphism $\phi$ is not necessarily surjective (indeed its image may not be a subvariety of $X\env H$, but only a constructible subset) and we have no obvious analogues of the Hilbert--Mumford criteria for (semi)stability. Moreover there are subtleties in the definitions of the semistable and stable loci, with different candidates for these defined in \cite{BDHK,dorankirwan}
  (some of which are described in $\S$\ref{sec:4.1} below), although in good situations they coincide. These good situations include that under consideration in this paper, which is studied in \cite{BDHK2} and described in $\S$\ref{sec:4.2}.

\subsection{Enveloping quotients}\label{sec:4.1} 
Let $H$ be a linear algebraic group acting on a variety $X$ over $\kk$ equipped with a
linearisation with respect to an ample line bundle $L \to X$ (that is, a lift of the action of $H$ to $L$). 
Note that (as in Remark \ref{VGIT}) we can multiply any such linearisation by a character $\chi:H \to \GG_m$ of $H$ (where $\GG_m = \kk^*$ is the multiplicative group) to obtain a new linearisation of the same action of $H$ on $X$ with respect to the same line bundle $L$.

Elsewhere in this paper we will assume that $X$ is projective, but we will sometimes want to restrict to affine open subsets of $X$. So in this subsection  $X$ can be projective or affine, or more generally projective-over-affine; this is the most general situation in which classical GIT for reductive group actions works well. When $X$ is projective-over-affine and the line bundle $L \to X$ is ample, then the graded algebra $\bigoplus_{r \geq 0} H^0(X,L^{\ten r})$ is finitely generated; this is not necessarily the case when $X$ is only assumed to be quasi-projective.

In \cite{BDHK} a framework is described for constructing \lq enveloping quotients' $X\env H$ for such actions, based on \cite{dorankirwan} which covers the case when $H=U$ is unipotent, using a non-reductive version of GIT. We let
$S=\bigoplus_{r \geq 0} H^0(X,L^{\ten r})$ 
be the graded $\kk$-algebra of global sections of non-negative tensor powers of
$L$,  and let $S^H$ be the subalgebra consisting of $H$-invariant sections. 
There is an associated scheme $\proj(S^H)$; in general $S^H$ is not finitely generated as an algebra over $\kk$, and so $\proj(S^H)$ is not a variety, but if $S^H$ is finitely generated 
 (which will be the case in the situation considered in this paper) then $\proj(S^H)$ is indeed a projective variety.

The inclusion $S^H
\hookrightarrow S$ defines an $H$-invariant rational 
map of schemes
\begin{equation} \label{eq:GiFi1}
\phi:X \dashrightarrow \proj(S^H),
\end{equation}
which is well defined on the open subset of $X$ consisting of points where some
invariant section of a positive tensor power of $L$ does not vanish. 
\begin{definition} \label{def:GiFi1} (\cite{BDHK} Definition 3.1.1)
Let $H$ be a linear algebraic group acting on a variety $X$ (which we will assume to be projective-over-affine) and $L \to X$ an ample
linearisation of the action. The \emph{naively semistable locus} is the open subset   \index{naively semistable}
\[
X^{\nss,H}:=\bigcup_{f \in I^{\nss,H}} X_f
\]
of $X$, where $I^{\nss,H}:=\bigcup_{r>0}H^0(X,L^{\ten r})^H$ is the set of invariant
sections of positive tensor powers of $L$ and $X_f$ is $\{x \in X: f(x) \neq 0\}$. The
\emph{finitely generated semistable locus} is the open subset  \index{finitely generated semistable}
\[
X^{\ssfg,H}:= \bigcup_{f \in I^{\ssfg,H}} X_f
\]
of $X^{\nss,H}$, where 
\[
I^{\ssfg,H}:=\left\{f \in \textstyle{\bigcup_{r>0}} H^0(X,L^{\ten
r})^H \mid (S^H)_{(f)} \text{ is a finitely generated $\kk$-algebra}\right\},
\]  
and $(S^H)_{(f)} = \{ s/f^r: s \in S^H, r \in \mathbb{N} \}$ is the localisation of the $\kk$-algebra $S^H$.  We will also write $X^{ss,H}$ for $X^{\ssfg,H}$.
\end{definition}

\begin{remark} \label{rem:GiFi1.1}
When $S^H$ is finitely generated as a $\kk$-algebra then so is $(S^H)_{(f)}$ for any $f \in \textstyle{\bigcup_{r>0}} H^0(X,L^{\ten
r})^H$, and so
$$X^{\ssfg,H} = X^{\nss,H}.$$
This will be the case in the situation considered in this paper. However
the finitely generated semistable locus $X^{\ssfg}$ is in general strictly
contained in $X^{\nss}$, since the subalgebra of invariant sections
can be non-noetherian even if $S$ is a finitely generated
$\kk$-algebra. When $H$ acts on an irreducible affine variety $X=\spec A$ and $L=\OO_X$
is equipped with the canonical $H$-linearisation, 
 then 
$X^{\nss}=X$ while $X^{\ssfg}$ is the union of all $X_f$ where $(A^H)_f$ is
finitely generated over $\kk$. It follows from  \cite[Proposition
2.10]{dk08} and  \cite{gro76}  that if $A^H$ is not 
finitely
generated (as is the case for the famous Nagata example) then   $\emptyset \neq
X^{\ssfg} \neq X^{\nss}$.
\end{remark} 


\begin{definition}
\label{def:GiFi3} 
(\cite{BDHK} Definition 3.1.6)
Let $H$ be a linear algebraic group acting on a variety $X$ (assumed to be projective-over-affine), and $L \to X$ an ample
linearisation of the action. 
The \emph{enveloping quotient} is the scheme   
\[ \label{deg-0-localisation}
X \env H:= \bigcup_{f \in I^{\ssfg}} \spec \left((S^H)_{(f)}\right)
\subseteq \proj(S^H) 
\]
where $S = \bigoplus_{r \geq 0} H^0(X,L^{\otimes r})$,
equipped with the canonical map $\phi:X^{\ssfg} \to X \env H$. The image
$\phi(X^{\ssfg})$ of this map is the \emph{enveloped quotient}.   
\end{definition}

\begin{remark} \label{envquot} The enveloping quotient $X\env H$ is a canonically defined reduced, separated 
scheme
locally of finite type over $\kk$. 
Suppose now that $X$ is projective. By \cite{BDHK} Corollary 3.1.21 the enveloping quotient is a projective variety if and only if the algebra of invariants $S^H$ is finitely generated where $S = \bigoplus_{r \geq 0} H^0(X,L^{\otimes r})$, and if so then the enveloping quotient is the projective variety $\proj(S^H)$ associated to the algebra of invariants. 
If $N$ is a normal subgroup of $H$ containing $U$ so that $H/N$ is reductive, and if $S^N$ is finitely generated, then since $S^H = (S^N)^{H/N}$, both $X\env N$ and $X\env H$ are projective and
$$X\env H = (X\env N) \env (H/N).$$
\end{remark}

\begin{definition} (\cite{BDHK} Definition 3.3.2)
Let $H=U\rtimes R$ be a linear algebraic group with unipotent radical $U$ acting linearly on a 
 variety $X$ (assumed to be projective-over-affine) with respect to an ample line bundle $L \to X$. The \emph{stable locus} is the open subset  \index{stable}
\[
X^{\rms,H}:= \bigcup_{f \in I^{\rms,H}} X_f
\]
of $X^{\ssfg,H}$, where $I^{\rms,H} \subseteq \bigcup_{r>0} H^0(X,L^{\ten r})^H$ is the 
subset
of $H$-invariant sections satisfying the following conditions:

(i) the open set $X_f$ is affine,

(ii)  the action of $H$ on $X_f$ is closed with all
  stabilisers finite groups, and

(iii) the restriction of the $U$-enveloping quotient map
 \[
\phi_U:X_f \to \spec((S^{U})_{(f)})
\]
is a principal $U$-bundle for the action of $U$ on $X_f$. 

Note that condition (i) is equivalent to $f \neq 0$ if $X$ is projective.
\end{definition}

The enveloping quotient 
$\phi:X^{ss,H} = X^{\ssfg,H} \to X \env H$ restricts to define a geometric quotient 
\[
X^{\rms,H} \to X^{\rms,H}/H = \phi(X^{\rms,H}) 
\]
which factorises through the restriction of the enveloping quotient map for $U$
in a natural way, giving the following commutative diagram, with
all inclusions open (see \cite{BDHK} $\S$3.3):
\[
\begin{tikzcd}[column sep=3.5em]
X^{\ssfg,H} \arrow{dd}{\phi} \arrow[r,phantom,"\supseteq"] & X^{\rms,H}
\arrow{d}{\phi_U}[swap]{\text{geometric quotient}} \arrow[r,phantom,"\subseteq"] & X^{\ssfg,U} 
\arrow{d}{\phi_U} \\
 & X^{\rms,H}/U \arrow{d}{\phi_R}[swap]{\text{geometric quotient}} \arrow[r,phantom,"\subseteq"] & X 
\env U \\
X \env H \arrow[r,phantom,"\supseteq"] & X^{\rms,H}/H &  
\end{tikzcd}
\] 
where $\phi_R$ is the restriction of the enveloping quotient map for the induced linear action of $R \cong H/U$ on $X\env U$.

\begin{remark} \label{remark8+}
Suppose that, as in Remark \ref{envquot}, that $X$ is projective, that $U \trianglelefteq N \trianglelefteq  H$ (so that $H/N$ is reductive) and that $S^N$ is finitely generated where $S = \bigoplus_{r \geq 0} H^0(X,L^{\otimes r})$. Suppose also that $X^{ss,N} = X^{s,N}$ so that $X\env N = X^{s,N}/N$. If $x \in X^{s,N}$ then $Nx$ is semistable for the induced action of $H/N$ on $X\env N = X^{s,N}/N$ if and only if there is some $r>0$ and $f \in H^0(X,L^{\otimes r})^H$ such that $f(x) \neq 0$ and $\phi_U:X_f \to \Spec((S^U)_{(f)})$ is a principal $U$-bundle, which happens if and only if $x \in X^{ss,H}$. In particular this implies that $x \in X^{ss,N} = X^{s,N}$, so $X_f \subseteq X^{s,N}$ and hence the action of N on $X_f$ is closed with all stabilisers finite. Then $Nx$ is stable for the induced action of $H/N$ on $X\env N$ if and only if there is such an $f$ such that the action of $H/N$ on $X_f/N$ is closed with all stabilisers finite, or equivalently the action of $H$ on $X_f$ is closed with all stabilisers finite. Thus
   $$ (X\env N)^{ss,H/N} = X^{ss,H}/N \mbox{ and } (X\env N)^{s,H/N} = X^{s,H}/N. $$
\end{remark}

We finish this subsection with a few technical statements needed later. 
Lacking a suitable reference, we include a proof of the first of these.

\begin{proposition} \label{lem:Co2Qu0} 
Let $U = N \rtimes U_1$ be a unipotent linear algebraic group, 
 and let $X$ be an affine $U$-variety (that is, an affine variety with an action of $U$). Suppose $X$
has the structure of a principal $N$-bundle, and the
quotient $X/N$ is a principal $U/N$-bundle. 
Then $X$ is a principal $U$-bundle.  
\end{proposition}

\begin{proof}
Let $\pi_N:X \to X/N$ be the quotient map for the $N$-action on $X$ and
$\pi_{U/N}:X/N \to (X/N)/(U/N)$ the quotient map for the $U/N$-action on
$X/N$. 
The composition $\pi_{U/N} \circ \pi_N:X \to (X/N)/(U/N)=X/U$  is a geometric quotient for the $U$-action on $X$. Because $U$ and $N$ are
unipotent the quotients $\pi_{U/N}$ and $\pi_U$ are locally trivial in the
Zariski topology \cite[Proposition 14]{ser58}, so by choosing sufficiently fine open covers it suffices to treat the
case where $X$ and $X/N$ are trivial bundles for $U$ and $U/N \cong U_1$, respectively. 
So
suppose that $X = N \times (X/N)$ and $(X/N)=U_1 \times (X/U)$. In this case we obtain a $U$-equivariant isomorphism
$$ X \cong U \times (X/U)$$ using the identification of $U$ with the semi-direct product $N \rtimes U_1$.\end{proof} 


\begin{proposition} \label{prop:TrRe3.1} \cite[Theorem 3.12]{ad07} Suppose $X$ is an affine  variety  acted
  upon by a unipotent group $U$ and a locally trivial quotient $X \to X/U$
  exists. Then $X/U$ is affine if and only if $X \to X/U$ is a trivial
  $U$-bundle.   
\end{proposition}

\begin{proposition} \label{cor:GiFi5} \cite[Corollary 3.1.21]{BDHK} Suppose $H$ is a linear algebraic group, $X$ an
  irreducible $H$-variety  and $L \to X$ a linearisation. If the
  enveloping quotient $X \env H$ is projective, then 
 for suitably
  divisible integers $c>0$  
the algebra of invariants $\bigoplus_{k \geq 0} H^0(X, L^{\otimes ck})^H$ is
finitely generated and the enveloping quotient $X \env H$ is the associated
projective variety ; moreover the line bundle $L^{\otimes c}$ induces an ample
line bundle $L^{\otimes c}_{[H]}$ on $X\env H$ such that  the natural structure
map  
\[
\bigoplus_{k \geq 0} H^0(X, L^{\otimes ck})^H \to \bigoplus_{k \geq 0} H^0(X\env H, L^{\otimes ck}_{[H]})
\]
is an isomorphism. 
\end{proposition}

\subsection{Linear algebraic  groups with graded unipotent radicals 
}\label{sec:4.2}

In general when a linear algebraic group $H$ acts linearly on a projective variety $X$ then $X\env H$ is not necessarily a projective variety, the enveloping quotient map $\phi:X^{ss,fg,H} \to X\env H$ is not necessarily surjective (indeed its image may not be a subvariety of $X\env H$ but only a constructible subset: see \cite{dorankirwan} $\S$6) and we have no obvious analogues of the Hilbert--Mumford criteria for (semi)stability. There is, however, a class of non-reductive groups to which some of  the key features and computational flexibility of reductive GIT can be extended. 
These are linear algebraic groups $H$ with unipotent radical $U$ graded 
by a central one-parameter subgroup of a Levi subgroup of $H$, 
in the following sense, studied in \cite{BDHK0,BDHK2}. 

\begin{definition}\label{def:gradedunipotent} 
We say that a linear algebraic group $H = U \rtimes R$  has {\em internally graded unipotent radical} $U$ if there is a central one-parameter subgroup $\l:\GG_m = \kk^* \to Z(R)$ of a Levi subgroup $R$ of $H$ 
 such that the induced adjoint action of the multiplicative group $\GG_m$ on the Lie algebra of $U$ has all its weights strictly positive. 
 Then $$\hat{U} := U \rtimes \l(\GG_m)$$ is a normal subgroup of $H$ and $H/\hU \cong R/\l(\GG_m)$ is reductive.
 \end{definition}
 
\begin{remark} One can also work with actions of a linear algebraic group $H$ with {\em externally graded unipotent radical $U$}, meaning that 
 there is a semi-direct product $\hat{H} = H \rtimes \l(\GG_m)$ such that the induced adjoint action of $\GG_m$ on the Lie algebra of $U$ has all weights strictly positive, and $\l(\GG_m)$ commutes with a Levi subgroup $R \cong H/U$ of $H$ (cf. \cite{BJK} Definition 2.1 and Remark 2.14). Then $\hat{H} \cong U \rtimes (R \times \l(\GG_m))$ has internally graded unipotent radical, and we can try to construct quotients for $H$-actions on projective varieties $X$ which extend to linear actions of $\hat{H}$, for example by considering quotients by $\hat{H}$ of the form $(X \times \PP^1)\env \hat{H}$ (cf. \cite{BDHK0}). In this paper we will only consider the internally graded case.
 
\end{remark}

Let $H = U \rtimes R$ be a linear algebraic group with internally graded unipotent radical $U$ and grading one-parameter subgroup $\l:\GG_m \to Z(R)$ acting linearly on an irreducible projective variety $X$ with respect to an ample line bundle $L$.
If $\chi: H \to \GG_m$ is a character of $H$, then the kernel of $\chi$ contains $U$ (since unipotent groups have no nontrivial characters), and the restriction of $\chi$ to $\hU$ can be identified with an integer in such a way  that the integer 1 corresponds to the character of $\hU$ which fits into the exact sequence $U \hookrightarrow \hat{U} \to \l(\GG_m)$. 

Let $\weight_{\min}$ be the minimal weight for the action of
the one-parameter subgroup $\l(\GG_m) \leq \hU \leq H$ 
on
$$V:=H^0(X,L)^*$$ and let $V_{\min}$ be the weight space of weight $\weight_{\min}$ in
$V$.  Then $\weight_{\min}=\weight_0$ where $\weight_0 < \weight_{1} < 
\cdots < \weight_{\max} $ are the weights with which 
$\l(\GG_m)$
acts on the fibres of $L^*$ over the fixed point set 
 for its action on $X$. Note that if $\xi \in \lieu$ is a weight vector with weight $\chi$ for the adjoint action of $\l(\GG_m)$ on the Lie algebra $\lieu$ of $U$, then the infinitesimal action of $\xi$ on $V$ takes a weight vector with weight $\weight$ for $\l(\GG_m)$ to one of weight $\weight + \chi$.
Thus we can assume that $\weight_{\min} < \weight_{\max} $
 since otherwise the action of $\l(\GG_m)$ on $X$ is trivial, and by the grading assumption so is the action of the unipotent radical $U$ of $H$, which means that the action of $H$ is via an action of the reductive group $R/\l(\GG_m) \cong H/\hU$. 

\begin{definition}\label{def:welladapted}
We will call a (rational) linearisation for an action of $\hU$ on $X$ 
 \emph{borderline adapted} if $\weight_{\min} = 0$. 
\end{definition}

\begin{remark} Let $\chi:H \to \GG_m$ be a character of $H$ whose restriction $\chi|_{\hU}$ to $\hU$ is identified with an integer as above, and let $c$ be a positive integer. Given a linearisation of the $H$-action on $X$ with respect to an ample line bundle $L\to X$, the induced linearisation of the action of $H$ on $X$ with respect to the ample line bundle $L^{\otimes c}$ can be twisted by the character $\chi$ so that the weights $\weight_j$ are replaced with $\weight_jc-\chi|_{\hU}$;
let $L_\chi^{\otimes c}$ denote this twisted linearisation.  
Recall from Remark \ref{rational} that the semistable and stable loci and GIT quotient by a reductive group such as $\GG_m$ are unchanged if the line bundle $L$ is replaced by a positive tensor power of itself, so when considering the action of $\l(\GG_m)$ on $X$ we can formally work with the rational linearisation $L_{\chi/c}$ obtained from twisting the original linearisation by the rational character $\chi/c$. If $\chi|_{\hU} \neq 0$ then we can twist any linearisation of the $H$-action by a rational multiple of $\chi$ to obtain a rational linearisation which is borderline adapted.
\end{remark} 

\begin{definition}\label{def:p} Suppose that $H$ acts linearly on a projective variety $X$ with respect to an ample line bundle $L$. Let
\[
Z_{\min}(X):=X \cap \PP(V_{\min})=\left\{
\begin{array}{c|c}
\multirow{2}{*}{$x \in X$} & \text{$x$ is a $\GG_m$-fixed point and} \\ 
 & \text{$\GG_m$ acts on $L^*|_x$ with weight $\weight_{\min}$} 
\end{array}
\right\}
\]
where $V=H^0(X,L)^*$, and
\[
X_{\min}^0:=\{x\in X \mid p(x)  \in Z_{\min}(X)\}  \quad \mbox{ where } \quad  p(x) =  \lim_{\substack{ t \to 0\\ t \in \GG_m }} \l(t) \cdot x \quad \mbox{ for } x \in X.
\]   
Let $X^{s,\GG_m}_{\min+}$ denote the stable subset of $X$ for the linear action of $\l(\GG_m)$ with respect to the linearisation $L_\chi^{\otimes c}$ for any $\chi$ and $c$ such that the rational character $\chi/c$ satisfies
$$ 
\weight_{\min} = \weight_0 < {\chi |_{\hU}}/{c} < \weight_1.$$
\end{definition}

\noindent By the theory of variation of (classical) GIT \cite{Dolg,Thaddeus} described in Remark \ref{VGIT},  $X^{s,\GG_m}_{\min+}$ is independent of the choice of such a rational character $\chi/c$.
 Indeed it follows from the description in Remark \ref{VGIT} of the stable locus for a linear action of $\GG_m$ that
$$ X^{s,\GG_m}_{\min+} = X_{\min}^0 \setminus Z_{\min}(X). $$
Since the infinitesimal action of a weight vector $\xi \in \lieu$ with weight $\chi$ for the adjoint action of $\l(\GG_m)$ on the Lie algebra $\lieu$ of $U$ takes a weight vector in $V$ with weight $\weight$ to one of weight $\weight + \chi$, the action of $U$ on $X$ preserves $X_{\min}^0$, although it  does not in general preserve $Z_{\min}(X)$. Thus if $u \in U$ we have 
\begin{equation} \label{xsminplus} uX^{s,\GG_m}_{\min+} = X_{\min}^0 \setminus uZ_{\min}(X). \end{equation}

\begin{remark}
Note that the intersection $X \cap \PP(V_{\min})$ is always non-empty, since $V=H^0(X,L)^*$ so $X$ is not contained in any hyperplane in $\PP(V)$.
\end{remark}

\begin{lemma}\label{lemmaA} 
 $p: X_{\min}^0 \to Z_{\min}(X)$ is $U$-invariant and $R$-equivariant; that is, 
\[p(rux)=rp(x) \text{ for } x \in X_{\min}^0, r \in R, u\in U.\]  
\end{lemma}
\proof
If $r \in R, x\in X_{\min}^0$ then 
\[p(rx) = \lim_{\substack{ t \to 0\\ t \in \GG_m }} \l(t) \cdot rx=\lim_{\substack{ t \to 0\\ t \in \GG_m }} r \cdot (\l(t)x)=rp(x)\]
since $\l(t) \in Z(R)$.
If $u = \exp(\xi)$ for some $\xi \in \Lie U$ which is a weight vector for the adjoint action of $\l(\GG_m)$, then 
this weight is strictly positive by the grading assumption, and we can choose coordinates on $\PP(H^0(X,L)^*)$ with respect to which the action of $\GG_m$ is diagonal and the infinitesimal action of $\xi$ is in Jordan form.  It now follows from the definition of $X^0_{\min}$ and $p$ that $p(ux)=x$ for this $u$, and hence for all $u \in U$, as required.
\qed


\begin{definition} \label{cond star}(
\textit{cf.}\ \cite{BDHK0}) 
With the notation of Definition \ref{def:p}, we define the following condition for the $\hU$-action on $X$:
\begin{equation}\label{star}
 \stab_{U}(z) = \{ e \}  \textrm{ for every } z \in Z_{\min}(X). \tag{$*$}
\end{equation}
Note that \eqref{star} holds if and only if we have $\stab_{U}(x) = \{e\}$ for all $x \in X^0_{\min}$. This is because $\stab_{U}(x) = \{e\}$ is an open condition on $x\in X$ and is invariant under the action of $\l(\CC^*)$ as $\l(\CC^*)$ normalises $U$.

This  is also referred to in \cite{BDHK0} as the condition that \emph{semistability coincides with stability} for the action of $\hU$ (or, when $\lambda:\GG_m \to Z(R)$ is fixed, for the linear action of $U$);  a slightly different version of this condition appears in \cite{BDHK2} (see \cite{BDHK2} Remark 2.2 for a comparison). 
\end{definition}

\begin{definition}\label{def min stable}   
When (\ref{star}) holds for a linear action of $\hU$ the \emph{min-stable locus} for the $\hU$-action is 
\[ X^{s,{\hU}}_{\min+}= \bigcap_{u \in U} u X^{s,\lambda(\GG_m)}_{\min+} .  \]
It follows from (\ref{xsminplus}) that $X^{s,{\hU}}_{\min+}= X^0_{\min} \setminus U Z_{\min}(X).$
\end{definition}

\begin{definition}\label{def:welladaptedaction} A {\it well-adapted linear action} 
on an irreducible projective variety $X$ is given by the following data: 
\begin{enumerate}
\item a linear algebraic group $H = U \rtimes R$ with unipotent radical $U$ and Levi subgroup $R$, and a central one-parameter subgroup $\l:\GG_m \to Z(R)$ of $R$ which grades $U$ in the sense of Definition \ref{def:gradedunipotent}, with a \lq complementary' connected subgroup $Z^\perp$ of $Z(R)$ such that $\mathrm{Lie}Z(R) = \mathrm{Lie}Z^\perp \oplus \mathrm{Lie}\lambda(\GG_m)$;
\item a linear action of $H$ on $X$ with respect to an ample line bundle $L$: 
\item a character $\chi: H \to \GG_m$ of $H$ whose restriction to $Z^\perp$ is zero and whose restriction to $\hU = U \rtimes \lambda(\GG_m)$ is nonzero, and  a strictly positive integer $c$ such that the rational character $\chi/c$
satisfies
$$(\chi |_{\hU})/c = \weight_{\min} + \epsilon \mbox{  where }0< \epsilon <\!< 1.$$
\end{enumerate}
Of course one way to choose $Z^\perp$ is so that its Lie algebra is the orthogonal complement to $\mathrm{Lie}\lambda(\GG_m)$ in $\mathrm{Lie}Z(R)$ with respect to a suitable inner product. $Z^{\perp}$ determines and is determined by the ray in the direction of $\chi$ (that is, consisting of positive scalar multiples of $\chi$); 
 the rational character $\chi/c$ is required to lie on its truncation (or extension) from the point labelled by $\weight_{\min}$, just beyond this minimum point.
\end{definition}

Here we certainly require that $\epsilon < \weight_1 - \weight_{\min}$ so that 
$(\chi |_{\hU})/c \in (\weight_{\min},\weight_1)$, but we may well require $\epsilon$ to be smaller than this (cf. the proof of Theorem \ref{mainthmextended} below). The main requirement is that $\epsilon>0$ should be sufficiently small to ensure via the theory of variation of GIT (described in Remark \ref{VGIT}) that twisting the linearisation of the $H$-action on $X$ by the rational character $\chi/c$ results in (semi)stable loci for reductive subgroups of $H$ which are independent of the choice of $ 
\epsilon$. By the Hilbert--Mumford criteria (Theorem \ref{HilbMum}) this can be expressed in terms of the geometry of convex hulls of sets of weights for the action on $X$ of a maximal torus of $R$.

\begin{remark}
For simplicity of exposition we assume that $X$ is irreducible, to ensure that $Z_{\min}(X)$ is connected and $X^0_{\min}$ is dense in $X$. If $X$ had two irreducible components, $X_1$ and $X_2$, then $\weight_{\min}$ could take different values for $X_1$ and for $X_2$.
\end{remark}

\subsection{The $\hat{U}$-theorem}\label{sec:4.3}

Our next aim is to prove the following theorem.

\begin{theorem} \label{mainthm} 
Let $(X,L,H,\hat{U},\chi,c)$ be ingredients for a well-adapted linear action (as in Definition \ref{def:welladaptedaction}) such that \eqref{star} holds (as in Definition \ref{cond star}), and suppose that $UZ_{\min}(X) \neq X^0_{\min}$.  
Then 
\begin{enumerate}[(i)]
\item the algebras of invariants 
\[\oplus_{m=0}^\infty H^0(X,L_{m\chi}^{\otimes cm})^{\hat{U}} \text{ and } \oplus_{m=0}^\infty H^0(X,L_{m\chi}^{\otimes cm})^{H} = (\oplus_{m=0}^\infty H^0(X,L_{m\chi}^{\otimes cm})^{\hat{U}})^{R}\]
are finitely generated;   

\item  the enveloping quotient $X\env \hat{U}$ is the projective variety associated to the algebra of invariants $\oplus_{m=0}^\infty H^0(X,L_{m\chi}^{\otimes cm})^{\hat{U}}$ and is a geometric quotient of the open subset $X^{s,\hU}_{\min+}$ of $X$ by $\hat{U}$;

\item the enveloping quotient $X\env H$ is the projective variety associated to the algebra of invariants $\oplus_{m=0}^\infty H^0(X,L_{m\chi}^{\otimes cm})^{\hat{H}}$ and is the classical GIT quotient of $X \env \hat{U}$ by the induced action of $R/\l(\GG_m)$ with respect to the linearisation induced by a sufficiently divisible tensor power of $L$. 
\end{enumerate}
\end{theorem}

\begin{remark} The proof we present here follows a shorter and simplified argument than the proof presented in the preprint \cite{BDHK0}. The same result is proved in \cite{BDHK2} under a different condition from \eqref{star}, which is also called `stability coincides with semi-stability for the $\hU$ action' (see \cite{BDHK2} Remark 2.2 for a comparison). In \cite{yikun} Y. Qiao studies the relationship between these and alternative geometric notions for stability to coincide with semi-stability. 
\end{remark}

Note that $X^{nss,\hU}$ is $U$-invariant and contained in $X^{nss,\GG_m} = X^{\rms,\GG_m}_{\min+}$, so that 
\begin{equation}\label{Xnss}
 X^{nss,\hU} \subseteq \bigcap_{u \in U} u X^{\rms,\GG_m}_{\min+} = X^{\rms,\hU}_{\min+} = X^0_{\min} \setminus UZ_{\min}(X)
 \end{equation}

A key ingredient in the proof of Theorem \ref{mainthm} is to show that 
\begin{equation}\label{XminisUstable}
X^0_{\min}  \subseteq X^{\rms,U},
\end{equation}
which will be done in Proposition \ref{prop:ExCo4} below.
The proof of Proposition \ref{prop:ExCo4} will rely on using the following lemma in an argument using induction on $\dim(U)$. 

\begin{lemma} \label{lem:ExCo2} \cite[Lemma 4.7.5]{dix96} Suppose $X$ is an affine
  variety  with an action of $\GG_a$ and let $\xi \in \Lie(\GG_a)$. If there is
  $f \in \OO(X)$ such that $\xi(f)=1 \in \OO(X)$, then $X$ is a trivial
  $\GG_a$-bundle.  
\end{lemma} 

The next lemma constitutes the main technical ingredient for the base case when $\dim(U)=1$; that is, for the action of $\hat{\GG_a}=\GG_a \rtimes \GG_m$ on an affine variety.
\begin{lemma} \label{lem:ExCo3}
Let $X$ be an affine variety  with an action of $\hat{\GG_a}=\GG_a \rtimes
\GG_m$, where $\GG_m$ acts on $\Lie (\GG_a)$ with strictly positive weight, and let $\xi$ be
a generator of $\Lie(\GG_a)$. Suppose
$\GG_m$ acts on $\OO(X)$ with  weights less than or equal to 0. Then every point in
$X$ has a limit in $X$ under the action of $t \in \GG_m$ as $t \to 0$; let $Z$ be
the set of such limit points in $X$. If $\stab_{\GG_a}(z)=\{e\}$ for each $z
\in Z$, then there is $f \in \OO(X)$ such that $\xi(f)=1 \in \OO(X)$.      
\end{lemma}

\proof
We first show that every point in $X$ has a limit under the action of $t \in
\GG_m$, as $t \to 0$. Fix $x \in X$. To say that $\lim_{t \to 0}
t \cdot x$ exists in $X$ means that the morphism $\phi_x:\GG_m \to X$,
$\phi_x(t)=t \cdot x$, extends to a morphism $\phi_x:\kk \to X$ under the usual
open inclusion $\GG_m \subseteq \kk$ (and then $\lim_{t \to 0} t \cdot x
:=\phi_x(0)$). This is equivalent to saying that the pullback homomorphism
$(\phi_x)^{\#}:\OO(X) \to \OO(\GG_m)=\kk[t,t^{-1}]$ factors through the
localisation map $\kk[t] \to \kk[t,t^{-1}]$. But if $a \in \OO(X)$ is a
$\GG_m$-weight vector of weight $m\leq
0$ then
\[
(\phi_x)^{\#}(a)(t)=a(t \cdot x) = (t^{-1} \cdot a)(x) = t^{-m}a(x)
\]
with $-m \geq 0$. Since $\OO(X)$ is generated by such weight vectors, we see
that $(\phi_x)^{\#}:\OO(X) \to \kk[t,t^{-1}]$ indeed factors through $\kk[t]
\to \kk[t,t^{-1}]$. Let $Z$ be the set of limit points in $X$. 
   
If $\xi \in
\Lie U$ is a
$\l(\GG_m)$-weight vector, then it has positive weight $\ell >0$, say. If $W$ is any
representation of $\hat{U}$, then $\xi$ defines a derivation $\xi:W \to W$ and
any weight vector in $W$ of weight $\omega 
\in \mb{Z}$ gets sent to a weight vector of weight $\omega + \ell$ under $\xi$. In
particular, if $W_{\max}$ denotes the $\GG_m$-weight space in $W$ of maximal
possible weight, then we have
\[
W_{\max} \subseteq \bigcap_{\xi \in \Lie U} \ker(\xi:W \to W) = W^U.
\]
    
Now, using the $\GG_m$-action, write $W=\OO(X)=\bigoplus_{m \leq 0} W_m$ as a
negatively graded algebra, where 
$W_m \subseteq \OO(X)$ is the subspace of weight vectors in $\OO(X)$ of
weight $m \leq 0$. Suppose $\GG_m$ acts on $\xi$ with weight
$\ell >0$. Then we have
\[
\xi(W_m) 
\begin{cases} 
=0 & \text{if $m > -\ell$}, \\
\subseteq W_0 & \text{if $m=-\ell$}, \\
\subseteq \bigoplus_{m<0} W_m & \text{if $m< -\ell$}.
\end{cases}
\]
Let $\tilde{W}=\xi(W_{-\ell})$ be the image of the weight space $W_{-\ell}$
under $\xi$, and consider the vector subspace 
\[
I:=\tilde{W} \oplus \bigoplus_{m<0} W_m.
\]
We claim that $I$ is a $\hat{\GG_a}$-stable ideal of $\OO(X)$. Indeed, $I$ is
$\GG_m$-stable and closed under the action of $\xi$ by construction, so we see
immediately that it is stable under the $\hat{\GG_a}$-action. Let $f
\in I$ and $a \in \OO(X)$. We need to 
show that $af \in I$, for which we may assume that $a \in W_p$ for some $p \leq
0$, without loss of generality. Now if $p<0$, then because multiplication
respects the grading we have $af \in \bigoplus_{m<0} W_m \subseteq
I$. So suppose $p=0$. Write $f=\tilde{f} + g$ where $\tilde{f} \in \tilde{W}$
and $g \in \bigoplus_{m<0} W_m$, so $af=a\tilde{f} + ag$. Then $ag
\in \bigoplus_{m<0} W_m \subseteq I$. Furthermore there is $h \in
W_{-\ell}$ such that $\xi(h)=\tilde{f}$, and because $\xi(a)=0$ we therefore
have $\xi(ah)=a\tilde{f}$, with $ah \in W_{-\ell}$, thus $a\tilde{f} \in
\tilde{W} \subseteq I$. Hence $af \in I$, and the claim is established.    

To finish the proof, we will show that $I=\OO(X)$. We may find a non-trivial
$\GG_m$-invariant complementary subspace $W'$ of $W_0$ such that 
$\OO(X)=W' \oplus I$ as vector spaces. It is easy to see that 
\[
Z=\{x \in X \mid f(x)=0 \text{ for all } f \in \textstyle{\bigoplus_{m<0}} W_m\}
\]
and so the subvariety  $V(I):=\{ x\in X \mid f(x) = 0 \text{ for all } f\in I\}$
defined by $I$ is contained in $Z$. Suppose now, for a contradiction, that
$I$ is a proper ideal of $\OO(X)$ and $\mf{m}$ is a
maximal ideal of $\OO(X)$ that contains $I$, and so  $\mf{m}$ defines a point
in $V(I)$. Given $a \in \mf{m}$, write
$a=a'+f$ with $a' \in W'$ and $f \in I$. Since $I \subseteq \mf{m}$
we have $a' \in \mf{m}$, and since $a' \in W' \subseteq \OO(X)^{\hat{\GG_a}}$ and 
$I$ is stable under the $\hat{\GG_a}$-action, we have $\hat{\GG_a} \cdot a'
\subseteq \mf{m}$. So $\mf{m}$ is stable under the $\hat{\GG_a}$-action. But then
$\mf{m}$ defines a point of $V(I) \subseteq Z$ that is fixed by $\hat{\GG_a}$,
which is a contradiction. Hence $I=\OO(X)$. In particular, the constant function
$1 \in W_0=\tilde{W}$, so 
there is $f \in \OO(X)$ such that $\xi(f)=1$.     
\qed

\medskip

Now, given a linearisation $\hat{U} \act L \to X$, let
$H^0(X,L)_{\max}$ be the $\GG_m$-weight space in $H^0(X,L)$ of maximal possible
weight (which is equal to $-\weight_{\min}$, in our previously defined notation). The
open subset $X^0_{\min}$ is covered by the affine open 
subvarieties 
$X_{\sigma}$, with $\sigma \in H^0(X,L)_{\max}$. Each $X_{\sigma}$ is invariant
under the $\hat{U}$-action, because $H^0(X,L)_{\max} \subseteq H^0(X,L)^U$ and
$\sigma$ is a $\GG_m$-weight vector, and if we choose each $\sigma$ to be a weight vector for the action of $T$, as we may, then $X_{\sigma}$ is invariant
under the $\hat{U}_T$-action. Given nonzero $\sigma 
\in H^0(X,L)_{\max}$, we can embed $X$ into a projective space $\PP^n \cong
\PP(H^0(X,L)^*)$ via $L$ using a 
basis of $n+1$ linear sections which are weight vectors for the
$\GG_m$-action, and which includes $\sigma$. Then $X_{\sigma}$ is contained in
an affine coordinate patch $\A^n=(\PP^n)_{\sigma}$ such that the action of
$\GG_m$ on $\A^n$ is diagonal, with all weights $\geq 0$; or equivalently,
$\GG_m$ acts on $\OO(X_{\sigma})$ with all weights $\leq 0$. Note also that each
point $x \in X_{\sigma}$ has a limit point in
  $X_{\sigma} \cap Z_{\min}(X)$ under the 
  action of $t \in \GG_m$ as $t \to 0$. 

We are now in a position to prove \eqref{XminisUstable}. 
\begin{proposition} \label{prop:ExCo4} Assume that condition \eqref{star} in Definition \ref{cond star} is satisfied. 
For each nonzero $\sigma \in
H^0(X,L)_{\max}$, the natural map $X_{\sigma} \to 
\spec(\OO(X_{\sigma})^U)$ is a trivial $U$-quotient for the $U$-action on
$X_{\sigma}$. Thus, $X^0_{\min} \subseteq X^{\rms,U}$ and the
restriction of the enveloping quotient map for $U \act L \to X$ restricts to
define a locally trivial $U$-quotient of $X^0_{\min}$.  
\end{proposition}

\proof
By diagonalising the action of
$\GG_m$-action on $\Lie U$ and using the  
exponential map we may choose a subnormal series   
\[
1=U_0 \trianglelefteq U_1 \trianglelefteq \dots \trianglelefteq U_m=U
\] 
which is preserved by each automorphism in the family $\lambda:\GG_m \to
\Aut(U)$ and such that each successive quotient $U_{j+1}/U_j \cong \GG_a$, with
$\GG_m$ acting on $\Lie(U_{j+1}/U_j)$ with positive weight.
We will prove that
$X_{\sigma} \to \spec(\OO(X_{\sigma})^{U_j})$ is a trivial $U_j$-quotient by 
induction on $j$, for $1 \leq j \leq m$.  

For the base case, let $\xi_1 \in \Lie(U_1)$ be non-zero. The affine subset $X_{\sigma}$ satisfies the conditions
needed to apply Lemma \ref{lem:ExCo3} with respect to the semi-direct product
$\hat{U_1}=U_1 \rtimes \GG_m$, so there is $f \in \OO(X)$ such that
$\xi_1(f)=1$. It follows from Lemma \ref{lem:ExCo2} that the natural map
$X_{\sigma} \to \spec(\OO(X_{\sigma})^{U_1})$ is a trivial $U_1$-quotient.  

For the induction step, suppose the canonical map $q_j:X_{\sigma} \to
\spec(\OO(X_{\sigma})^{U_j})$ is 
a trivial $U_j$-quotient, for $1 \leq j \leq m-1$. The action of $U_{j+1}
\rtimes \GG_m$ on $X_{\sigma}$ descends to an action of the induced semi-direct product $(U_{j+1}/U_j)
\rtimes \GG_m$ on $\spec(\OO(X_{\sigma})^{U_j})$. Fixing a $\GG_m$-weight vector
$\xi_{j+1} \in \Lie(U_{j+1}) \setminus \Lie(U_j)$, we obtain a generator of
$\Lie(U_{j+1}/U_j)=\Lie(U_{j+1})/\Lie(U_j)$ which acts on
$\OO(X_{\sigma})^{U_j}$ by restricting the action of $\xi_{j+1}$ on
$\OO(X_{\sigma})$ to the subring $\OO(X_{\sigma})^{U_j}$. It is immediate that all
weights for the natural $\GG_m$-action on $\OO(X_{\sigma})^{U_j}$ are
non-positive so, by Lemma \ref{lem:ExCo3}, given a point $y \in
\spec(\OO(X_{\sigma})^{U_j})$ the limit of $y$ under the natural action of
$t \in \GG_m$ as $t \to 0$ exists. If $y=q_j(x)$ for $x \in X_{\sigma}$, then
because $q_j$ is $\GG_m$-equivariant we have
\[
\lim_{t \to 0} t \cdot y = \lim_{t \to 0} q_j(t \cdot x) = q_j\left( \lim_{t \to 0} t
\cdot x \right),
\]
and thus all points in $\spec(\OO(X_{\sigma})^{U_j})$ have limits in
$q_j(X_{\sigma} \cap Z_{\min}(X))$. Let $z \in X_{\sigma} \cap Z_{\min}(X)$ and
suppose $u \in U_{j+1}$ is such that $(uU_j) \in
\stab_{U_{j+1}/U_j}(q_j(z))$. Then there 
is $\tilde{u} \in U_j$ such that $u^{-1}\tilde{u}z=z$. Since $\stab_U(z)$ is
trivial, we conclude that $u=\tilde{u} \in U_j$, so $uU_j=eU_j$. Hence we may
apply Lemma \ref{lem:ExCo3} to the 
action of $(U_{j+1}/U_j) \rtimes \GG_m$ on $\spec(\OO(X_{\sigma})^{U_j})$ to
conclude that there is $f \in \OO(X_{\sigma})^{U_j}$ such that
$\xi_{j+1}(f)=1$. By Lemma \ref{lem:ExCo2}, the natural map
$\spec(\OO(X_{\sigma})^{U_j}) \to \spec(\OO(X_{\sigma})^{U_{j+1}})$ is a trivial 
$U_{j+1}/U_j$-bundle. Since the projection $U_{j+1} \to U_j$ splits, the compositon
\[
X_{\sigma} \to \spec(\OO(X_{\sigma})^{U_j}) \to \spec(\OO(X_{\sigma})^{U_{j+1}})
\]
is a principal $U_{j+1}$-bundle by Proposition \ref{lem:Co2Qu0}, which is in fact
trivial by Proposition \ref{prop:TrRe3.1}. This establishes the induction step. 

Therefore $X_{\sigma} \to \spec(\OO(X_{\sigma})^U)$ is a trivial
$U$-quotient. The rest of the statement of the proposition follows immediately
from the definition of the stable locus for $U \act L \to X$. 
\qed

Note that this argument gives us the following more general result, which does not require condition \eqref{star} in Definition \ref{cond star}.

\begin{proposition} \label{prop:fff}
Let $X$ be an irreducible projective scheme acted upon by $\hat{U}$
with a very ample linearisation
$L \to X$.  
Then
 we have 
$$\{ x \in X^0_{\min} | \stab_U(p(x)) = \{ e \} \} \subseteq X^{\rms,U},$$
  with the restriction of the enveloping quotient map
  defining a locally trivial $U$-quotient of open subvarieties  $$\{ x \in X^0_{\min} | \stab_U(p(x)) = \{ e \} \}  \to
 \{ x \in X^0_{\min} | \stab_U(p(x)) = \{ e \} \} /U.$$   
\end{proposition}

Now we can prove Theorem \ref{mainthm}.

\medskip

\noindent {\it Proof of Theorem \ref{mainthm}}  
It follows from \eqref{XminisUstable}  that $X^0_{\min}$ and $X^0_{\min} \setminus UZ_{\min}(X)$ both have quasi-projective geometric quotients $X^0_{\min}/U$ and $(X^0_{\min} \setminus UZ_{\min}(X))/U$ with induced linear actions of $\l(\GG_m)$.

But by \cite{BB} $p: X^0_{\min} \to Z_{\min}(X)$ induces a $\l(\GG_m)$-invariant morphism $p_U: X^0_{\min}/U \to Z_{\min}(X)$ with affine fibres $F_z$ on which $\l(\GG_m)$ acts linearly with strictly positive weights; its restriction to $(X^0_{\min} \setminus UZ_{\min}(X))/U$ has fibres of the form $F_z\setminus \{z\}$.

Since $X$ is irreducible and $UZ_{\min}(X) \neq X^0_{\min}$, it follows that $X^0_{\min} \setminus UZ_{\min}(X)$ is a dense open subset of $X^{\rms,\hU}$ with a geometric quotient $(X^0_{\min} \setminus UZ_{\min}(X))/\hU$ and that $p$ induces a morphism 
$$p_{\hU}: (X^0_{\min} \setminus UZ_{\min}(X))/\hU \to Z_{\min}(X)$$ whose fibres are weighted projective spaces. Hence $(X^0_{\min} \setminus UZ_{\min}(X))/\hU$ is projective and is a dense open subset of the enveloping quotient $X\env \hU$. So $(X^0_{\min} \setminus UZ_{\min}(X))/\hU = X\env \hU$ and $X \env \hU$ is projective. Thus 
$$ X^{nss,\hU} = X^{ss,fg,\hU} = X^{\rms,\hU} = X^0_{\min} \setminus UZ_{\min}(X) = X^{s,\hU}_{\min+},$$
which is irreducible,
and the algebra of $\hU$-invariants is finitely generated by Proposition \ref{cor:GiFi5}. Hence the algebra of $H$-invariants is also finitely generated with
$$X\env H = (X\env \hU)\env (R/\lambda(\GG_m))$$ (see Remark \ref{envquot}).
\qed

\begin{definition}\label{def:s=ss} Let $(X,L,H,\hat{U},\chi,c)$ be ingredients for a well-adapted linear action (as in Definition \ref{def:welladaptedaction}) such that \eqref{star} holds (as in Definition \ref{cond star}). 
We denote by $X^{s,{{H}}}_{\min+}$ and $X^{ss,{{H}}}_{\min+}$ the pre-images in $X^{s,{{\hU}}}_{\min+} 
$ of the stable and semistable loci for the induced linear action of the reductive group $H/\hU = R/\l(\GG_m)$ on 
$X\env \hat{U} = X^{s,\hU}_{\min+}/\hat{U}$.
We denote by $Z_{\min}(X)^{s(s),R/\l(\GG_m)}$  the stable and semistable loci for the action of the reductive group $R/\l(\GG_m)$ on $Z_{\min}(X)$, with the (rational) linearisation induced by twisting the original linearisation of the $R$-action by a rational multiple of $\chi$ chosen so that after twisting $\l(\GG_m)$ acts trivially on the restriction of $L$ to $Z_{\min}(X)$ (equivalently the twisted linearisation is borderline adapted in the sense of Definition \ref{def:welladapted}).

There are two versions of \emph{$H$-stability=$H$-semistability} for this well-adapted linear action such that (\ref{star}) holds. For the weak version we require 
 that 
$X^{s,{{H}}}_{\min+} = X^{ss,{{H}}}_{\min+}$, while for the strong version we require
$$Z_{\min}(X)^{s,R/\l(\GG_m)} = Z_{\min}(X)^{ss,R/\l(\GG_m)} \neq \emptyset.$$
These versions both coincide with condition \eqref{star} when $H=\hat{U}$. 
The weak version can depend on the choice of the character $\chi$, whereas the strong version is independent of this choice since $\l(\GG_m)$ acts trivially on $Z_{\min}(X)$. We will only consider the strong version in this paper, but the weak version will be studied in \cite{toappear}.
\end{definition}

A modification of the proof of Theorem \ref{mainthm} gives us the following result.
\begin{theorem}\label{mainthmextended} 
Let $(X,L,H,\hat{U},\chi,c)$ be ingredients for a well-adapted linear action (as in Definition \ref{def:welladaptedaction}) such that semistability coincides with stability for the action of $H$  in the strong sense of Definition \ref{def:s=ss}. 
Suppose also that $UZ_{\min}(X)$ is not dense in $X$. 
Then  the projective variety $X\env H$ is a geometric quotient of $X^{s,{{H}}}_{\min+}$ 
 by the action of $H$, the stabiliser  $\Stab_H(x)$ is finite for all $x \in X^{s,{{H}}}_{\min+}$
and 
$$ X^{nss,H} = X^{ss,fg,H} = X^{\rms,H} = X^{s,H}_{\min+} =X^{ss,H}_{\min+} = p^{-1}(Z_{\min}(X)^{s,R/\l(\GG_m)}) \setminus UZ_{\min}(X),$$
where $p:X^0_{\min} \to Z_{\min}(X)$ is  as in Definition \ref{def:p} and $Z_{\min}(X)^{s,R/\l(\GG_m)}$ is  as 
 in Definition \ref{def:s=ss}. 
\end{theorem}
\proof
First note that $p^{-1}(Z_{\min}(X)^{s,R/\l(\GG_m)})$ and $UZ_{\min}(X)$ are invariant under the action of $H = U \rtimes R$, since $p$ is $R$-equivariant and $U$-invariant, while $Z_{\min}(X)$ and $Z_{\min}(X)^{s,R/\l(\GG_m)}$ are $R$-invariant. 

Recall that $\mathrm{Lie}Z(R) = \mathrm{Lie} \l(\GG_m) \oplus \mathrm{Lie} Z^{\perp} $ where $Z^{\perp}$ is the complementary connected subgroup to $\l(\GG_m)$ in the centre $Z(R)$ of $R$ such that 
$$ \chi|_{Z^{\perp}} = 0 \,\, \mbox{ and } \,\, \chi|_{\hU} /c  = \weight_{\min} + \epsilon \, \mbox{ for } \, 0 < \epsilon << 1$$
as in Definition \ref{def:welladaptedaction}. Thus $\mathrm{Lie}R = \mathrm{Lie} \l(\GG_m) \oplus \mathrm{Lie} Z^{\perp} \oplus \mathrm{Lie}[R,R]$ where $[R,R]$ is the commutator subgroup of $R$. 
Let $T$ be a maximal torus of $R$; then $\l(\GG_m) \leqslant T$ since by assumption $\l(\GG_m)$ is central in $R$. We can assume that
 $$ \mathrm{Lie} T = \mathrm{Lie} \l(\GG_m) \oplus \mathrm{Lie} T^{\perp} $$
 where $T^{\perp} = Z^{\perp} T_{[R,R]}$ for some maximal torus $T_{[R,R]}$ of $[R,R]$. These direct sum decompositions of $\mathrm{Lie}R$ and $\mathrm{Lie}T$ induce decompositions $(\mathrm{Lie}R)^* = ( \mathrm{Lie} \l(\GG_m))^* \oplus (\mathrm{Lie} Z^{\perp})^* \oplus (\mathrm{Lie}[R,R])^*$ and $ (\mathrm{Lie} T)^* = (\mathrm{Lie} \l(\GG_m))^* \oplus (\mathrm{Lie} T^{\perp})^* $ of their duals, and of the spaces of rational characters of $R$ and $T$, such that 
$$ \chi|_{T} /c  = (\weight_{\min} + \epsilon , 0_{T^{\perp}})$$
where $0_{T^{\perp}}$ denotes the trivial character on $T^{\perp}$.

 By the Hilbert--Mumford criteria (Theorem \ref{HilbMum}),  $x \in X$  is (semi)stable for the action of $R$ on $X$  if and only if (after twisting the linearisation by $\chi/c$) every $rx$  in the $R$-orbit of $x$ is (semi)stable for the action of $T$, and this happens if and only if $ (\weight_{\min} + \epsilon , 0_{T^{\perp}})$
  is contained in (the interior of) the convex hull $C_R(rx)$ of the weights with which $T$ acts on the fibres of $L^*$ over the $T$-fixed points in the closure $ \overline{Trx}$ in $X$ of the $T$-orbit $Trx$ of $rx$ for every $r \in R$. 
  
The composition of the inclusion $T^{\perp} \hookrightarrow T$ with the quotient map $T \to T/\l(\GG_m)$ is an isomorphism, so  we can regard characters of $T/\l(\GG_m)$ as characters of $T^{\perp}$.  By the Hilbert--Mumford criteria,  $z \in Z_{\min}(X)$ is (semi)stable for the action  of $R/\l(\GG_m)$ on $Z_{\min}(X)$ if and only if every $rz$  in the $R$-orbit of $z$ is (semi)stable for the action of $T/\l(\GG_m)$ (with respect to the linearisation for this action induced from the original linearisation for the $R$-action after twisting by a rational multiple of $\chi$ to make it borderline adapted). 
  Moreover this happens if and only if $(\weight_{\min},0_{T^{\perp}})$ is contained in the (interior of the) convex hull  $C_{R/\l(\GG_m)}(rz)$ of the weights with which $T
  $ acts on the fibres of $L^*$ over the $T$-fixed points in the closure $ \overline{Trz}$ in $Z_{\min}(X)$ of the $T$-orbit $Trz$  of $rz$ for every $r \in R$, where these weights are all of the form $(\weight_{\min}, \eta)$ for some rational character $\eta$ of $T^{\perp}$. 
  
 By the definition of $X^0_{\min}$, if $x \not\in X^0_{\min}$ and $r \in R$ then all the $T$-fixed points in $\overline{Trx}$ have the form $(\weight_j, \eta)$ where $\weight_j \ge \weight_1 > \weight_0 = \weight_{\min}$, so if $\epsilon < \weight_1 - \weight_0$ then $ (\weight_{\min} + \epsilon , 0_{T^{\perp}})$ cannot lie in their convex hull, and so $x$ is not semistable for the action of $R$. 
   By the definitions of $p:X^0_{\min} \to Z_{\min}(X)$ and the linearisation  for the action of $R/\l(\GG_m)$ on $Z_{\min}(X)$, if $x \in X^0_{\min}$ 
 then the set of $T$-fixed points in $\overline{Trx}$ is the union of $\overline{TP(rx)}$ (which is the closure of the $T/\l(\GG_m)$-orbit of $p(rx) = rp(x)$ in $Z_{\min}(X)$) and other fixed points with weights of  the form $(\weight_j, \eta)$ where $\weight_j \ge \weight_1 > \weight_0 = \weight_{\min}$. Moreover 
 $\overline{Trx}$ contains a $T$-fixed point with weight of  the form $(\weight_j, \eta)$ where $\weight_j \ge \weight_1 > \weight_0 = \weight_{\min}$
 if and only if $rx \not\in Z_{\min}(X)$ or equivalently $x \not\in Z_{\min}(X)$.
 By the strong $ss=s$ assumption, the convex hull $C_{R/\l(\GG_m)}(rp(x))$ either contains $(\weight_{\min},0_{T^{\perp}})$ in its interior or does not contain $(\weight_{\min},0_{T^{\perp}})$ at all. So since there are only finitely many weights to consider, if $\epsilon$ is sufficiently small then we have
 \begin{itemize}
 \item $ (\weight_{\min} + \epsilon , 0_{T^{\perp}})$
  is contained in the convex hull $C_R(rx)$ if and only if $ (\weight_{\min} + \epsilon , 0_{T^{\perp}})$
  is contained in its interior;  and
  \item $ (\weight_{\min} + \epsilon , 0_{T^{\perp}})$
  is contained in the (interior of the) convex hull $C_R(rx)$ if and only if $ (\weight_{\min} , 0_{T^{\perp}})$
  is contained in the (interior of the) convex hull $C_{R/\l(\GG_m)}(rp(x))$ and $x \not\in Z_{\min}(X)$.
 \end{itemize}
 Thus  
 $$ X^{s,R} = X^{ss,R} = p^{-1}(Z_{\min}(X)^{s,R/\l(\GG_m)}) \setminus Z_{\min}(X).$$
 Now we can adapt the proof of 
 Theorem \ref{mainthm} in a straightforward way. We have
 
 ($\a$) $X^{nss,H}$ is $U$-invariant and contained in $X^{nss,R} = X^{ss,R} = X^{s,R}$, so that 
$$ X^{nss,H} \,\, \subseteq \,\, \bigcap_{u \in U} u X^{s,R} = p^{-1}(Z_{\min}(X)^{s,R/\l(\GG_m)}) \setminus UZ_{\min}(X);$$

($\b$) $X^0_{\min}\setminus UZ_{\min}(X) \subseteq X^{\rms,U}$ so $X^0_{\min}\setminus UZ_{\min}(X)$ has a quasi-projective geometric quotient $(X^0_{\min}\setminus UZ_{\min}(X))/U$ with an induced linear action of $R$;

($\gamma$) $p: X^0_{\min} \to Z_{\min}(X)$ induces an $R/\l(\GG_m)$-equivariant morphism $$p_U: (p^{-1}(Z_{\min}(X)^{s,R/\l(\GG_m)})\setminus UZ_{\min}(X))/\hU \to Z_{\min}(X)^{s,R/\l(\GG_m)}$$ with fibres which are weighted projective spaces;

($\delta$) $R/\l(\GG_m)$ acts on $Z_{\min}(X)^{s,R/\l(\GG_m)}$ with finite stabilisers and a geometric quotient $$Z_{\min}(X)^{s,R/\l(\GG_m)}/R =
Z_{\min}(X)\env (R/\l(\GG_m))$$ which is a projective variety.

From these it follows that $p^{-1}(Z_{\min}(X)^{s,R/\l(\GG_m)}) \setminus UZ_{\min}(X)$ is a dense open subset of $X^{s,H}$ with a geometric quotient $(p^{-1}(Z_{\min}(X)^{s,R/\l(\GG_m)}) \setminus UZ_{\min}(X))/H$ and that $p$ induces a morphism 
$$p_{H}: (p^{-1}(Z_{\min}(X)^{s,R/\l(\GG_m)}) \setminus UZ_{\min}(X))/H \to Z_{\min}(X)^{s,R/\l(\GG_m)}/R =
Z_{\min}(X)\env (R/\l(\GG_m))$$ whose fibres are quotients of weighted projective spaces by finite groups. Hence $(p^{-1}(Z_{\min}(X)^{s,R/\l(\GG_m)}) \setminus UZ_{\min}(X))/H$ is projective and is a dense open subset of the enveloping quotient $X\env H$, so $(p^{-1}(Z_{\min}(X)^{s,R/\l(\GG_m)}) \setminus UZ_{\min}(X))/H = X\env H$. Thus 
$$ X^{nss,H} = X^{ss,fg,H} = X^{\rms,H} = X^{s,H}_{\min+} =X^{ss,H}_{\min+} = p^{-1}(Z_{\min}(X)^{s,R/\l(\GG_m)}) \setminus UZ_{\min}(X)$$
as required.
\qed

\subsection{Why is well-adaptedness so crucial?}\label{subsectionexample}  In this section we illustrate the importance of the well-adapted shift of linearisation in forming non-reductive quotients. We present a simple example when $\dim U = 1$ showing that if well-adaptedness fails 
 for a linear action of $\hU$ on a projective space $\PP(V)$, then the natural map $\PP(V)^{ss,\hU} \to \PP(V)\env \hU$ is in general not surjective, in contrast with the reductive situation.

Let $V$ be a finite-dimensional
representation of $\hat{U}=\hU^{[2\ell]} = U \rtimes  \CC^*$
where $U = \CC^+$ and $\CC^*$ acts on $\Lie(U)$ with weight $2\ell$ where $\ell \geq 1$, and let $X=\PP(V)$ with
$L=\OO(1)$ defining the canonical linearisation. When $\ell = 1$ the corresponding linear action of $\hU$ on $\PP(V)$ with respect to the line bundle $\mathcal{O}_{\PP(V)}(1)$ is studied in detail in \cite{BDHK} $\S$5, and the analysis extends easily to the case when $\ell >1$.
An adaptation of the proof of Jordan normal form shows that $V$ admits a
decomposition  
\begin{equation} \label{eq:ExAd1.3}
V\overset{\hat{U}
}{\cong} \bigoplus_{i=1}^q \CC^{(a_i)} \ten \sym^{l_i}
\CC^2   
\end{equation}
of $\hat{U}
$-modules, where 
$\CC^{(a_i)}$ is the one dimensional representation of $\hat{U}
$
  defined by the character $\hat{U}
   \to \CC^*$ of weight
$a_i \in \mb{Z}$, and 
 $\sym^{l_i} \CC^2$ is the standard irreducible
representation of $G=\SL(2,\CC)$ of highest weight $l_i \geq 0$, on which $\hat{U}
$
acts via the surjective homomorphism
\[
\rho_{\ell}:\hU = \hat{U}^{[2\ell]} \to \hat{U}^{[2]}, \quad (u;t) \mapsto (u;t^{\ell})
\]
and the identification of $\hat{U}^{[2]}$ with the Borel subgroup $B \subseteq
G$ of upper triangular matrices given by
\[
\hat{U}^{[2]}= \CC^+  \rtimes \CC^*  \to B, \quad (u;t) \mapsto
\left( \begin{smallmatrix} t & u \\ 0 & t^{-1} \end{smallmatrix} 
\right).   
\]
$\hU$ embeds in $G \times \CC^*$ via $\rho_{\ell}$ and the character of weight one, and 
the linear action of $\hU$ on $V$ extends to a linear action of $G \times \CC^*$ where $G=\SL(2,\CC)$
acts on
$\sym^{l_i}\CC^2$ in the usual manner and 
trivially on $\CC^{(a_i)}$ for each $i$. There is therefore
an isomorphism of  $G \times \CC^*$-spaces 
\[
(G\times \CC^*) \times_{\hU} \PP(V) \cong (G \times \CC^*)/\hU) \times \PP(V) \cong (\CC^2 \setminus \{0\}) \times \PP(V) 
\]
giving a $G\times\CC^*$-equivariant embedding of $(G\times \CC^*) \times_{\hU} \PP(V)$ in $\PP^2 \times \PP(V)$.
Here the
action of $G \times \CC^*$ on $\PP^2=\PP(\CC^3)$
is defined by the representation given in block form by
\[
(g,t) \mapsto \left(\begin{array}{c|c} 1 & 0 \\ \hline 0 &
    g\left(\begin{smallmatrix}t^{-1} & 0 \\ 0 &
        t^{-1} \end{smallmatrix}\right) \end{array}\right)  
\in \GL(3;\CC), \quad g \in G, \ t \in \CC^*,   
\]
where $\GL(3;\CC)$ acts on $\kk^3$ by left multiplication. For any integer
$N>0$, this representation 
 determines a $G \times \CC^*$-linearisation $\mc{P}_{N} $ of the action on
$\OO_{\PP(V)} \otimes \OO_{\PP^2}(N) \to \PP(V) \times \PP^2$. 
As in \cite{BDHK} $\S$5, since the complement in the affine variety $\CC^2$ of $G/U  \cong \CC^2 \setminus \{ 0\} \cong (G \times \CC^*)/\hU$ has codimension two, if $N>0$ is sufficiently large then
\[
\PP(V) \env U \cong (\PP^2 \times \PP(V))
\env_{\mc{P}
_{N}} G \, \mbox{ and } \, \PP(V)
\env \hat{U}
 \cong (\PP^2 \times \PP(V)) 
\env_{\mc{P}
_{N}} (G \times \CC^*), 
\]
and in addition the algebras of $U$-invariants and $\hat{U}$-invariants on $\PP(V)$ are finitely generated $\CC$-algebras.
Moreover 
 the stable loci $\PP(V)^{\rms,U}$ and
$\PP(V)^{\rms,\hU}$ and 
semistable loci $\PP(V)^{nss,U} = \PP(V)^{ss,fg,U}$ and $\PP(V)^{nss,\hU} = \PP(V)^{ss,fg,\hU}$  may be computed as the
intersections with $\PP(V)$ (embedded in $\PP^2 \times \PP(V)$ as $\{[1:1:0]\} \times \PP(V)$) of the stable and semistable loci for the linear actions of the reductive groups $G$  and $G \times \CC^*$ on $ \PP^2 \times \PP(V)$.
For the $\hU$-quotient this is done by applying the Hilbert-Mumford criteria as given in Theorem 
\ref{HilbMum}, using the maximal torus $T_1 \times T_2 \subseteq G
\times \CC^*$ where $T_1$ is the subgroup of diagonal matrices
in $G$ and $T_2=\CC^*$. 

The geometry of the corresponding weight diagram is described in \cite{BDHK} $\S$5, where the group of characters of $T_1 \times T_2$ is identified with $\mb{Z} \times \mb{Z}$ in the natural
way. 
 For sufficiently large $N>0$, when the linearisation is twisted by a rational character so that it becomes well-adapted, the weights for the rational
$T_1 \times T_2$-linearisation $\mc{P}
_N \to \PP^2 \times \PP(V)$ are
arranged in the fashion of 
Figure \ref{fig:ExAd1}. Here the three clusters of weights correspond to the fixed points for the $T_1 \times T_2$-action on $\PP(V)$ paired with each of the three fixed points $[1:0:0],[0:1:0],[0:0:1]$ in $\PP^2$, and they are translations of each other. Twisting by a rational character corresponds to vertical translation of the diagram. Well-adaptedness means that the top cluster of weights  (which is symmetrically distributed about the vertical axis)
is contained in a cone which is symmetric about the vertical axis, pointing upwards with its vertex lying just below the origin on the vertical axis, and with some weights lying on the boundary of this cone. The additional condition that semistability coincides with stability for the linear action of $\hU$ on $\PP(V)$ means that the vertex of this cone is not itself a weight.

\begin{figure}[ht]
\begin{tikzpicture}[scale=1] \draw[->] (0,-8) -- (0,4) node[anchor=south]{$\mb{Q} \cong \Hom(T_2,\CC^*)\ten_{\mb{Z}} \mb{Q}$}; \draw[->] (-7,0) -- (7,0) node[anchor=south east] {$\mb{Q} \cong \Hom(T_1,\CC^*)\ten_{\mb{Z}} \mb{Q}$}; \draw[->] (0,0) -- (1.5,3) node[anchor=west]{$\left(\begin{smallmatrix} 1 \\ \ell \end{smallmatrix} \right)$}; \draw[->] (0,0) -- (-1.5,3) node[anchor=east]{$\left(\begin{smallmatrix} -1 \\ \ell \end{smallmatrix} \right)$}; \foreach \x in {-0.45,-0.15,0.15,0.45}{  -0.15+0.3n for n=-1,0,1,2 \fill (\x,0.55) circle (1.5pt); 
};
 \foreach \x in {-0.15,0.15}{ \fill (\x,1.05) circle (1.5pt); }; \foreach \x in {-0.9,-0.6,...,0.9}{  0.3n for n=-3,-2,...,3  \fill (\x,1.55) circle (1.5pt); };  \fill (0,2.25) circle (1.5pt); 
 \foreach \x in {-1.05,-0.75,-0.45,-0.15,0.15,0.45,0.75,1.05}{ -0.15+0.3n for n=-2,-1,...,3  \fill (\x,2.55) circle (1.5pt); 
 };  
 \begin{scope}[shift={(-3,-6)}] \draw[->] (0,0) -- (1.5,3); \draw[->] (0,0) -- (-1.5,3); \node at (0,0) [anchor=north]{$(-N,-\ell N)$}; \foreach \x in {-0.45,-0.15,0.15,0.45}{  -0.15+0.3n for n=-1,0,1,2 \fill (\x,0.55) circle (1.5pt); };  \foreach \x in {-0.15,0.15}{  \fill (\x,1.05) circle (1.5pt); 
 }; 
\foreach \x in {-0.9,-0.6,...,0.9}{  0.3n for n=-3,-2,...,3 \fill (\x,1.55) circle (1.5pt); };  \fill (0,2.25) circle (1.5pt); 
\foreach \x in {-1.05,-0.75,-0.45,-0.15,0.15,0.45,0.75,1.05}{ -0.15+0.3n for n=-2,-1,...,3  \fill (\x,2.55) circle (1.5pt); 
}; \end{scope} 

bottom right chamber (translate top chamber by (3,-6))

\begin{scope}[shift={(3,-6)}]
\draw[->] (0,0) -- (1.5,3); \draw[->] (0,0) -- (-1.5,3); \node at (0,0) [anchor=north]{$(N,-\ell N)$}; 
\foreach \x in {-0.45,-0.15,0.15,0.45}{  -0.15+0.3n for n=-1,0,1,2 \fill (\x,0.55) circle (1.5pt); 
};  
\foreach \x in {-0.15,0.15}{  \fill (\x,1.05) circle (1.5pt); 
}; 
\foreach \x in {-0.9,-0.6,...,0.9}{  0.3n for n=-3,-2,...,3  \fill (\x,1.55) circle (1.5pt); 
};  \fill (0,2.25) circle (1.5pt); 

\foreach \x in {-1.05,-0.75,-0.45,-0.15,0.15,0.45,0.75,1.05}{ -0.15+0.3n for n=-2,-1,...,3  \fill (\x,2.55) circle (1.5pt); 
}; \end{scope} \end{tikzpicture}
\caption{An example of the distribution of (rational) weights for $T_1 \times T_2 \act \mc{P}
_{N} \to
  \PP^2 \times 
\PP(V)$.}
\label{fig:ExAd1}
\end{figure}

In these circumstances the weight diagram combined with the Hilbert--Mumford criteria tell us that 
\begin{itemize}
\item the convex hull of any set of weights 
 contains the origin
precisely when its interior 
 does so,  and thus semistability and
stability coincide for the action of $T_1 \times T_2$, and hence of $G \times \CC^*$, on $\PP^2 \times \PP(V)$; 
\item if a point $p \in \PP^2 \times \PP(V)$ is stable for the linear action of $G \times \CC^*$ then $p \in (\CC^2 \setminus \{0\})
\times \PP(V)$, and 
\item if $p=([1:1:0],[v])$  then $p$ is stable if and only if  $[v] \in \PP(V)^0_{\min} \setminus U\PP(V_{\min}) = \PP(V)^0_{\min} \setminus UZ_{\min}(X)$.
\end{itemize}
This means that $\PP(V)^{\rms,\hU} = \PP(V)^{ss,fg,\hU}=\PP(V)^{nss,\hU}= \PP(V)^0_{\min} \setminus UZ_{\min}(X),$  that the composition of the inclusion of $\PP(V)^{\rms,\hU}$ in $(\PP^2 \times \PP(V))^{ss,G \times \CC^*}$ with the natural map $$(\PP^2 \times \PP(V))^{ss,G \times \CC^*} \to (\PP^2 \times \PP(V))\env (G \times \CC^*) \cong \PP(V)\env \hU$$ is surjective, and that  $ \PP(V)\env \hU$ is a geometric quotient of $\PP(V)^0_{\min} \setminus UZ_{\min}(X)$ by $\hU$, as predicted by Theorems \ref{mainthm}  and \ref{mainthmextended}.
Note however that if  well-adaptedness fails, 
then the natural map $\PP(V)^{ss,fg,\hU} \to \PP(V)\env \hU$ is in general not surjective, in contrast with the reductive situation and with the situation of Theorem \ref{mainthm}.

\subsection{Blow-ups to ensure semistability coincides with stability}

In the set-up of the Green--Griffiths--Lang conjecture, initially the condition (\ref{star}) that semistability coincides with stability  in Definition \ref{def:s=ss} is not satisfied; however we can blow $X$ up to obtain a situation in which semistability does coincide with stability and then apply Theorems \ref{thm:mainA}-\ref{thm:mainintegration}. This blow-up construction works much more generally, in the reductive setting and in the non-reductive setting, as we will now explain briefly. Thus the requirement in Theorems \ref{thm:mainA}-\ref{thm:mainintegration} that semistability should coincide with stability is less restrictive than appears at first sight. The requirement for well-adaptedness is crucial, but can always be attained by twisting the linearisation by a suitable rational character without altering the action on $X$.

Suppose that a reductive group $G$ acts linearly on a projective variety $X$ which has some stable points, but also has semistable
points which are not stable. 
In \cite{K2} it is described how one can blow $X$ up along a sequence of 
  $G$-invariant subvarieties to obtain a $G$-invariant morphism 
$\tilde{X} \to X$ where $\tilde{X}$ is a projective variety acted on 
linearly by $G$ such that $\tilde{X}^{ss,G} = \tilde{X}^{s,G}$. The induced birational 
morphism $\tilde{X}/\!/G \to X/\!/G$ of the geometric invariant theoretic quotients 
is then an algorithmically determined partial desingularisation of $X/\!/G$, in the sense that if $X$ is nonsingular then the centres of the blow-ups can be taken to be nonsingular, and $\tilde{X}/\!/G$ 
has only orbifold singularities 
 whereas the singularities of $X/\!/G$ 
are in general much worse. Even when $X$ is singular, we can regard the birational 
morphism $\tilde{X}/\!/G \to X/\!/G$ as resolving (most of) the contribution to the singularities of $X/\!/G$ coming from the group action.

The set $\tilde{X}^{ss,G}$ can be obtained from $X^{ss,G}$ as follows. There exist 
semistable points of $X$ which are not stable if and only if there exists a 
non-trivial connected reductive subgroup of $G$ fixing a semistable point. Let 
$r>0$ be the maximal dimension of a reductive subgroup of $G$ 
fixing a point of $X^{ss,G}$ and let $\calr(r)$ be a set of representatives of conjugacy 
classes of all connected reductive subgroups $R$ of 
dimension $r$ in $G$ such that 
$$ Z^{ss}_{R} := \{ x \in X^{ss,G} :  \mbox{$R$ fixes $x$}\} $$
is non-empty. Then
$$
\bigcup_{R \in \calr(r)} GZ^{ss}_{R}
$$
is a disjoint union of nonsingular closed subvarieties $G Z_R^{ss}$ of $X^{ss,G}$ such that $G Z_R^{ss}$ is isomorphic to the quotient of $G \times Z_R^{ss}$ by the diagonal action of the normaliser $N_G(R)$ of $R$ in $G$, which is itself reductive. The action of 
$G$ on $X$ lifts to an action on the blow-up $\psi: X_{(1)}\to X$ of 
$X$ along the closure of $\bigcup_{R \in \calr(r)} GZ_R^{ss}$ which can be linearised so that the complement 
of $X_{(1)}^{ss,G}$ in $ \psi^{-1}(X^{ss,G})$ is the proper transform of the 
subset $\phi^{-1}(\phi(GZ_R^{ss}))$ of $X^{ss,G}$ where $\phi:X^{ss,G} \to X/\!/G$ is the quotient 
map (see \cite{K2} 7.17). 
Here we use the linearisation with respect to (a tensor power of) the pullback of the ample line bundle $L$ on $X$ perturbed by a sufficiently small multiple of the exceptional divisor $E_{(1)}$. This will give us an ample line bundle on the blow-up  $\psi:X_{(1)} \to X$ , and if the perturbation is sufficiently small it will have the property that 
$$   \psi^{-1}(X^{s,G}) \subseteq  X_{(1)}^{s,G} \subseteq  X_{(1)}^{ss,G} \subseteq \psi^{-1}(X^{ss,G}) \subseteq X_{(1)}.$$
Moreover no point of $X_{(1)}^{ss,G}$ is fixed by a 
reductive subgroup of $G$ of dimension at least $r$, and a point in $ X_{(1)}^{ss,G} $ 
is fixed by a reductive subgroup $R$ of 
dimension less than $r$ in $G$ if and only if it belongs to the proper transform of the 
subvariety $Z_R^{ss}$ of $X^{ss,G}$. Repeating this construction at most $r$ times, under the assumption that $X^{s,G} \neq \emptyset$, gives us a $G$-invariant morphism 
$\tilde{X} \to X$ where $\tilde{X}$ is a projective variety acted on 
linearly by $G$ such that $\tilde{X}^{ss,G} = \tilde{X}^{s,G}$, with an induced birational 
morphism $\tilde{X}/\!/G \to X/\!/G$, as required.

\begin{remark} \label{rem:problemcases}
If $X^{s,G} = \emptyset$, then this process might terminate with $X^{ss,G}_{(k)} = G Z_R^{ss}$ for some positive dimensional reductive subgroup $R$ of $G$. Then, 
since $G Z_R^{ss}$ is isomorphic to the quotient of $G \times Z_R^{ss}$ by the diagonal action of the normaliser $N_G(R)$ of $R$ in $G$,
quotienting $X^{ss,G}_{(k)}$ by $G$ is equivalent to quotienting $Z^{ss}_R$ by the action of $N_G(R)$, which is induced by an action of the reductive group $N_R(G)/R$. 

Another possibility, if $X^{s,G} = \emptyset$, is that $X^{ss,G}_{(k)} = \emptyset$ for some $k$. In this case $X_{(k)}$ has a nonempty open subset which is isomorphic to the quotient of $G \times Y$ by the diagonal action of a parabolic subgroup $P$ of $G,$ where $Y$ is a $P$-invariant subvariety of $X$. Then finding a geometric quotient by $G$ of a nonempty open subset of $X$ corresponds to finding a geometric quotient of a nonempty open subset of $Y$ by the parabolic subgroup $P$, which is non-reductive but with internally graded unipotent radical.
\end{remark}

The blow-up construction in the reductive case can be modified in suitable non-reductive settings as described in \cite{BDHK0,yikun}.
Suppose that $H=U\rtimes R \geqslant \hat{U}$ has internally graded unipotent radical $U$ and that we have a well-adapted linear action of $H$ on $X$ with respect to $L$ as at Definition \ref{def:welladaptedaction}. Suppose also that there is some $z \in Z_{\min}(X)$ which has trivial $U$-stabiliser and is stable for the action of $R/\lambda(\GG_m)$ on $Z_{\min}(X)$; this condition is not necessary for the blow-up construction but simplifies it considerably (cf. Remark \ref{rem:problemcases}). 
Then there is a sequence of blow-ups of $X$ along $H$-invariant projective subvarieties 
 resulting in a projective variety $\hat{X}$ with a well adapted linear action of $H$ (with respect to a power of an ample line bundle given by tensoring the pullback of $L$ with small multiples of the exceptional divisors for the blow-ups) which satisfies the  condition \eqref{star}. Thus Theorem \ref{mainthm} applies, giving us a projective geometric quotient
$$\hat{X} \env \hat{U} = \hat{X}^{s,\hU}_{\min+}/\hU$$
and its (reductive) GIT quotient $\hat{X} \env H = (\hat{X} \env \hat{U} ) \env \bar{R}  = (\hat{X} \env \hat{U} ) /\!/ \bar{R}$ where $\bar{R}\cong H/\hU \cong R/\l(\GG_m)$.
Moreover there is a sequence of further blow-ups along $H$-invariant projective subvarieties, 
resulting in a projective variety $\tilde{X}$ satisfying the same conditions as $\hat{X}$ and in addition the strong version of the condition $H$-stability=$H$-semistability (see Definition \ref{def:s=ss}). 

Whenever we have such a sequence of blow-ups, Theorems \ref{thm:mainA}-\ref{thm:mainintegration} apply to the quotient $\tilde{X} \env H = \tilde{X}^{s,H}_{\min+}/H$. 
This is a projective completion of the geometric quotient by $H$ of an $H$-invariant open subset of $X$ which 
 is isomorphic (via the blow-down map) to
 the complement in $\tilde{X}^{s,H}_{\min+}$ of the exceptional divisors resulting from the blow-ups in the construction of $\tilde{X}$.

\section{$\Omega$-moment maps for complex group actions on K\"ahler manifolds}\label{sec:momentmaps}

\subsection{$\Omega$-moment maps for complex reductive actions}\label{subsec:setup}

Suppose now that $Y$ is a compact K\"ahler manifold with a holomorphic action of a complex reductive Lie group $G$; then $G$ is the complexification of any maximal compact subgroup $K$ of $G$. 
Recall that the dual $\mathfrak{k}^* = \Hom_{\RR}(\mathfrak{k},\RR)$ of the Lie algebra $\mathfrak{k}$ of $K$ embeds naturally in the complex dual $\mathfrak{g}^* = \Hom_{\CC}(\mathfrak{k},\CC)$ of the 
Lie algebra $\mathfrak{g} = \mathfrak{k} \otimes \CC$ of $G$, as 
\begin{equation} \label{eqn:dual} \mathfrak{k}^* = \{\xi \in \mathfrak{g}^*: \xi(\mathfrak{k}) \subseteq \RR\}. \end{equation}

\begin{remark} For a real or complex vector space $V$ the notation $V^*_{(\RR)}=\Hom_{\RR}(V,\RR)$ can be used to denote the real dual space, and for a complex vector space $V$ the notation $V^*_{(\CC)}=\Hom_{\CC}(V,\CC)$ can be used for the complex dual space. We will often abuse notation by writing simply $V^*$ in each case, in the hope that it will be clear from the context whether we mean the real or complex dual.  
\end{remark}

From the viewpoint of symplectic geometry it is customary to fix a maximal compact subgroup $K$ of $G$ and to assume that (by averaging over $K$ if necessary) we have chosen a K\"ahler form $\omega$ on $Y$ which is $K$-invariant; then we can ask for a moment map $\mu:Y \to \mathfrak{k}^*$ for the $K$-action as in $\S$\ref{sec:redGIT}. 
 Any other maximal compact subgroup of $G$ is given by $K' = g^{-1}Kg$ for some $g \in G$;
then $\omega_{K'} = g^*\omega$ is $K'$-invariant and if $\mu:Y \to \mathfrak{k}^*$ exists then $\mu_{K'} = Ad^*(g^{-1}) \circ \mu_K \circ g$ is a $K'$-moment map with respect to $\omega_{K'}$, where $Ad^*$ denotes the co-adjoint action of $G$ on the complex dual $\mathfrak{g}^*$ of its Lie algebra and $\mathfrak{k}^*$ is embedded in $\mathfrak{g}^*$ as above. 
So to define a \lq moment map' for the $G$-action on $Y$, instead of fixing a K\"ahler form  $\omega$ it is natural to ask for a $G$-orbit $\Omega$ in   
$$\{ (K,\omega) \in \mathcal{K}_G \times \mbox{K\"ahler}(Y) : \omega \mbox{ is $K$-invariant} \}$$
where $ \mbox{K\"ahler}(Y)$ is the space of 
 K\"ahler forms on the complex manifold $Y$ and 
 $$\mathcal{K}_G = \{K \, |\,  K \mbox{ is a maximal compact subgroup of } G\}.$$
 We will give the $G$-orbit $\Omega$ of $(K,\omega) \in \mathcal{K}_G \times \mbox{K\"ahler}(Y)$ the structure of a smooth manifold obtained by identifying it with the quotient of $G$ by the stabiliser in $G$ of $(K,\omega)$, and thus avoid the necessity to work with the infinite-dimensional space $ \mbox{K\"ahler}(Y)$.
 
 \begin{definition} \label{defn:omega}
We will call a $G$-orbit $\Omega$ in   
$\{ (K,\omega) \in \mathcal{K}_G \times \mbox{K\"ahler}(Y):\omega \mbox{ is $K$-invariant} \}$
 a \emph{$G$-equivariant K\"ahler structure} on $Y$. 
 Given a $G$-equivariant K\"ahler structure $\Omega$ on $Y$,
 we define an \emph{$\Omega$-moment map} for the $G$-action on $Y$    
 to be a smooth $G$-equivariant map 
$$\newmm: \Omega \times Y \to \mathfrak{g}^*$$
such that $\newmm(K,\omega, x) = \mu_{(K,\omega)}(x)$ for each $(K,\omega) \in \Omega$ 
 and $x \in Y$, where
$\mu_{(K,\omega)}:Y \to \mathfrak{g}^*$ is the composition of a moment map for the $K$-action on $(Y,\omega)$ with the canonical embedding \eqref{eqn:dual} in  $\mathfrak{g}^*$  of the dual 
 of the Lie algebra of~$K$. 
\end{definition}

\begin{remark} \label{rem:symmetric} The stabiliser in $G$ of $(K,\omega)$ in the orbit $\Omega$ is the intersection of the normaliser $N_G(K)$ of $K$ in $G$ with the stabiliser $\{ g \in G:g^*\omega = \omega\}$ of $\omega$; this intersection must contain $K$ so we can identify $\Omega$ with a quotient of the symmetric space $G/K$. 
By \cite{HHSWZ}  Theorem A the quotient of the normaliser $N_G(K')$ of any compact subgroup $K'$ of $G$ by right multiplication by the product $K'_0\, Z_G(K')$ of its connected component $K'_0$ and its centraliser $Z_G(K')$ in $G$ is finite. 
The centraliser $Z_G(K)$ of the maximal compact subgroup $K$ in $G$ is the centre $Z(G)$ of $G$. Moreover the connected component $G_0$ of the identity in $G$ 
 is the product of its commutator subgroup $[G_0,G_0]$ and its centre $Z(G_0)$, which intersect in a finite central subgroup $F$ of $G_0$.  Here $[G_0,G_0]$ is semisimple while $Z(G_0)$ is a complex torus. 

When $G=Z(G_0)$ is a complex torus then it has a unique maximal compact subgroup $K$ whose normaliser is $G$. On the other hand it follows from Theorem 2 of \cite{atiyah} that in this situation, if the action of $G$ on $Y$ is effective (at least up to a finite subgroup), then the stabiliser $\{ g \in G:g^*\omega = \omega\}$ of any $K$-invariant K\"ahler form $\omega$ for which there exists a moment map is $K$. This tells us that in general when $G_0 = [G_0,G_0]\, Z(G_0)$, if the action of $Z(G_0)$ on $M$ is effective up to a finite subgroup, then the stabiliser $\{ g \in G_0:g^*\omega = \omega\}$ in $G_0$ of any $K$-invariant K\"ahler form $\omega$ for which there exists a moment map is contained in  $[G_0,G_0] \, Z(K_0)$, where $K_0$ is the connected component of the identity in $K$ and thus a maximal compact subgroup of $G_0$. 
Since $([G_0,G_0] \, Z(K_0) \, \cap \, K_0Z(G_0))/K_0$ is finite, combining this with \cite{HHSWZ}  Theorem A as above shows that
 the quotient by $K$ of the stabiliser in $G$ of $(K,\omega)$ is finite. Thus this stabiliser is a compact subgroup of $G$ which contains the maximal compact subgroup $K$, and so equals $K$.
This implies that  the orbit $\Omega$ is the symmetric space $G/K$, provided that the action of the centre $Z(G)$ of $G$ on $Y$ is effective up to a finite subgroup; if this condition is not met then we can replace the $G$-action with an action of a quotient $\tilde{G}$ of $G$ by a central subgroup such that $Z(\tilde{G})$ does act effectively on $Y$.

\end{remark}

\begin{definition} \label{defn:mommapeqn} We will say that a smooth $G$-equivariant map 
$\newmm: \Omega \times Y \to \mathfrak{g}^*$ \emph{satisfies the $\Omega$-moment map differential equation} for a $G$-action on $Y$ with respect to a $G$-equivariant K\"ahler structure $\Omega$ on $Y$ if  the following  condition on its derivative holds:

If $a \in \mathfrak{g}$  and $\xi \in T_xY$  then the derivative at $(K,\omega,x) \in \Omega \times Y$ of the complex-valued function on $\Omega \times Y$ given by evaluating $\newmm: \Omega \times Y \to \mathfrak{g}^*$ at $a$ takes $(0,  
\xi) \in T_{(K,\omega)}\Omega \times T_xY$ to 
\begin{equation} \label{one} (  
\eta_{\omega,x}(\xi,a_x) - \eta_{\omega,x}(\iota_K(a)_x, \xi))/2i,\end{equation}
where $\eta_{\omega,x}$ is the value at $x$ of the Hermitian metric on $Y$ whose imaginary part is the $K$-invariant K\"ahler form $\omega$, while $\iota_K$ is the $K$-invariant anti-complex involution on $\mathfrak{g}$ with fixed point set $\mathfrak{k}$, and $x \mapsto a_x$ is the holomorphic vector field on $Y$ determined by the infinitesimal action of $G$. 

\end{definition}

\begin{lemma} \label{lem:mommapeqn} A smooth $G$-equivariant map 
$\newmm: \Omega \times Y \to \mathfrak{g}^*$ is an $\Omega$-moment map for a $G$-action on $Y$ with respect to a $G$-equivariant K\"ahler structure $\Omega$ on $Y$ if and only if it satisfies the $\Omega$-moment map differential equation.  \end{lemma}
\proof 
We need to show that $\mu_{(K,\omega)}:Y \to \mathfrak{g}^*$ defined for each $(K,\omega) \in \Omega$ 
 and $x \in Y$
 by
$\mu_{(K,\omega)}(x) = \newmm(K,\omega, x) $ is the composition of a moment map for the action of $K$ on $(Y,\omega)$ with the embedding of $\mathfrak{k}^*$  in  $\mathfrak{g}^*$
as $\{ \zeta \in \mathfrak{g}^*: \zeta(\liek) \subseteq \RR\}$. Assuming the convention that the sesquilinear form $\eta_{\omega,x}$ is complex linear in its second variable,
this follows as

(i) $(  
\eta_{\omega,x}(\xi,a_x) - \eta_{\omega,x}(\iota_K(a)_x, \xi))/2i $ is a complex linear function of $a \in \mathfrak{g}$, and

(ii) if $a \in \mathfrak{k}$ then $\iota_K(a) = a$ so $(   
\eta_{\omega,x}(\xi,a_x) - \eta_{\omega,x}(\iota_K(a)_x, \xi))/2i $ is real and equal to $\omega_x(\xi,a_x)$.

\qed

\begin{remark} Given a $G$-equivariant K\"ahler structure $\Omega$ on $Y$ and a choice of maximal compact subgroup $K$ of $G$ with $(K,\omega) \in \Omega$, the existence and choice of an $\Omega$-moment map 
$\newmm: \Omega \times Y \to \mathfrak{g}^*$ for the $G$-action on $Y$ is equivalent to the existence and choice of a moment map $\mu:Y \to \lieks$ in the traditional sense for the $K$-action on the K\"ahler manifold $(Y,\omega)$.
However this point of view with emphasis on the complex group action rather than the compact one is a more natural one to take when extending the notion of a moment map to non-reductive linear algebraic groups, in particular if we wish to construct quotients of open subsets of $Y$ by holomorphic actions of non-reductive linear algebraic groups with internally graded unipotent radicals. 
\end{remark}

\begin{example} \label{example:FS}
When a reductive group $G$ acts linearly on a complex projective variety $Y \subseteq \PP^n$ via a representation $\rho:G \to \GL(n+1)$, then as in \cite{francesthesis} (see \eqref{mommap} above) we can choose a $G$-equivariant K\"ahler structure $\Omega \subseteq \mathcal{K}_G \times \mbox{K\"ahler}(Y)$ in the sense of Definition \ref{defn:omega}, such that \\
(i) for each maximal compact subgroup $K$ of $G$ and K\"ahler form $\omega$ on $Y$ such that 
$(K,\omega) \in \Omega$ there is a $K$-invariant hermitian inner product $\langle  \, ,\rangle_K$ on $\CC^{n+1}$ with respect to which $\rho(K)$ acts unitarily, and $\omega$ is the associated Fubini--Study form; \\
(ii) there are moment maps $\mu_{(K,\omega)}:Y \to \lieks$   
 for each $(K,\omega) \in \Omega$ giving an $\Omega$-moment map $\newmm: \Omega \times Y \to \mathfrak{g}^*$ with
$$\newmm(K,\omega, [y]).a = \mu_{(K,\omega)}([y]).a = \frac{\langle y, \rho_*(a) y \rangle_K}{2 \pi i || y ||^2} =\frac{ \bar{y}^T \rho_*(a)y}{2\pi i ||y||^2} $$
for $a \in \mathfrak{g}$ and $[y] \in Y \subseteq \PP^n$;\\
(iii) the $G$-(semi)stable loci of $Y$ can be described in terms of the $\Omega$-moment map $\newmm:\Omega \times Y \to \mathfrak{g}^*$ as
$$ Y^{s,G} = \{x \in Y : 0 \in \newmm( \Omega \times \{x\}) \mbox{ and } \dim \mathrm{Stab}_G(x) = 0 \}$$
and 
$$ Y^{ss,G} = \{x \in Y : 0 \in \newmm( \Omega \times \overline{Gx})  \};$$
(iv) we can define $S$-equivalence as usual on $Y^{ss,G}$ as $x \sim y$ if and only if the closures of $Gx$ and $Gy$ meet in $Y^{ss,G}$, and then for any $(K,\omega) \in \Omega$ the inclusion of $\mu_{(K,\omega)}^{-1}(0)$ in $Y^{ss}$ 
 induces an identification
$$\mu_{(K,\omega)}^{-1}(0)/K \, \cong \, Y^{ss,G}/\! \sim \,\,\, \cong \,\,Y/\!/G.$$
\end{example}

\subsection{$\Omega$-moment maps for non-reductive actions} 
\label{subsec:Hactions}


Suppose now that $Y$ is a compact K\"ahler manifold with a holomorphic action of a complex reductive group $G = K_\CC$ with a maximal compact subgroup $K$, and that $G$ has a subgroup $\hat{U}=U\rtimes \l(\CC^*)$ with internally graded unipotent radical $U$, such that  
 $K \cap \hat{U} = \l(S^1)$ is the unique maximal compact subgroup of the one-parameter subgroup $\l(\CC^*)$ of $\hU$ (and is therefore a maximal compact subgroup of $\hU$). Then the Lie algebra of $\hU$ decomposes as a real vector space as
\begin{equation}\label{decomp}
\hat{\lieu}=\RR \oplus i\RR \oplus \lieu
\end{equation}
where $\mathrm{Lie}(K \cap \hU)=i\RR$ and $\lieu$ is the Lie algebra of the (complex) unipotent group $U$. 
Let $\Omega$ be a $G$-equivariant K\"ahler structure on $Y$ represented by $(K,\omega)$ where $\omega$ is a $K$-invariant K\"ahler form on $Y$.

Suppose that $X\subseteq Y$ is a compact complex submanifold of $Y$ which is invariant under the $\hU$ action, though not necessarily invariant under the $G$-action. 
Then  the circle subgroup $\l(S^1)=K\cap \hU$ preserves the K\"ahler structure on $X$ given by the restriction of $\omega$. 
The K\"ahler form $\omega$ makes $Y$ and $X$ into symplectic manifolds acted on by $K$ and $S^1$ respectively and in addition gives $Y$ a $K$-invariant and $X$ an induced $S^1$-invariant Riemannian metric. Assume that a moment map 
exists for the action of $K$ on $Y$, whose composition 
 with the embedding $\liek^* 
  \to  
  \lieg^*$ defines for  us an $\Omega$-moment map $\newmm: \Omega \times Y \to \lieg^*$ 
  for the action of $G$ on $Y$ in the sense of Definition \ref{defn:omega}. Composing this with the inclusion of $X$ in $Y$ and the restriction map $\lieg^*  \to 
  \hat{\lieu}^*= \CC \oplus \lieu^*=\RR \oplus i\RR \oplus \lieu^*$, where  $\lieu^* = \lieu^*_{(\CC)}$ and $ \hat{\lieu}^*= \hat{\lieu}^*_{(\CC)}$, 
 we get a 
  map $\newmmxu:X \to \hat{\lieu}^*$ fitting into a  diagram 
\begin{equation}\label{uhatmomentmap}
\xymatrix{\Omega \times X \ar[r]^-{\newmm|_{\Omega \times X}} \ar[rd]^{\newmmxu} &  \lieg^*=\liek^* \oplus i\liek^* \ar[d] 
\\
& \hat{\lieu}^*=\RR \oplus i\RR \oplus \lieu^*} .
\end{equation}
We can allow the addition of a rational character (that is, a rational multiple of the derivative of the group homomorphism $\hU \to \CC^*$ with kernel $U$) to this \lq $\Omega$-moment map' $\newmmxu:X \to \hat{\lieu}^*$. Indeed, in the K\"ahler setting there is no reason not to allow real multiples, rather than only rational ones, here.

\begin{remark} \label{GrebMiebach}
This construction is related to the work of Greb and Miebach in
 \cite{grebmiebach}. Motivated partly by the non-reductive version of GIT for unipotent actions using reductive envelopes explored in \cite{dorankirwan}, they study holomorphic actions of a 
unipotent subgroup $U \leqslant G=K_\CC$ of a simply-connected semisimple complex Lie group $G$
 on a compact K\"{a}hler manifold $X$ with K\"ahler form $\omega$. They look for\\
 (i) analogues in this setting of the notions of a {linear action} and 
of a {reductive envelope} $\overline{G \times_U X}$;\\
(ii) constraints on $\omega$ to allow it to be extended to a $K$-invariant K\"{a}hler form on a reductive envelope $\overline{G \times_U X}$, and \\
(ii) methods involving moment maps for the $K$-action on $\overline{G \times_U X}$ for constructing and studying quotients for the $U$-action on $X$ in the K\"ahler context. 

They prove that the following conditions are equivalent:\\
(a) $G \times_U X$ is K\"ahler;\\
(b) the $U$-action on $X$ is meromorphic in a suitable sense; \\   
(c) there exists a \lq Hamiltonian $G$-extension': a compact K\"{a}hler manifold $(Z,\omega_Z)$ with a Hamiltonian $K$-action and a $U$-equivariant embedding $X \hookrightarrow Z$ such that the de Rham cohomology class $[\omega_Z]$ restricts on $X$ to $[\omega]$.

If these conditions are satisfied there are embeddings ${X \hookrightarrow G \times_U X \hookrightarrow G \times_U Z \cong G/U \times Z \hookrightarrow V \times Z}$ where 
$G/U$ is embedded as a $G$-orbit in a representation $V$ of $G$ with flat $K$-invariant K\"ahler structure. They define
$${X^{ss,U}[\omega] = X \cap \{ y \in G/U \times Z: \mu^{-1}(0) \cap \overline{Gy} \neq \emptyset\}}$$
where $\mu = \mu_Z + \mu_V$ for moment maps $\mu_Z: Z \to \lieks$ and $\mu_V: V \to \lieks$, and show that   
 there is a  stratified K\"ahler 
 space $\overline{Q}$ with a nonsingular open subset $Q$ on which the stratified K\"ahler structure restricts to a smooth K\"ahler form $\omega_Q$, and a 
geometric quotient $\pi: X^{ss,U}[\omega] \to X^{ss,U}[\omega]/U = Q$ 
extending to a meromorphic map from  $X$ to $\overline{Q}$ with $[\pi^*\omega_Q] = [\omega]$.
 
Suppose now that $X$ is invariant under the action of a subgroup $\hU$ of $G$ which is the semidirect product of the unipotent subgroup $U$ and a multiplicative group which grades it. Then $G \times_U X$ has an induced action of $G\times \CC^*$, and we can ask for this action to extend to $\overline{G \times_U X}$, and for $\omega$ to extend to a $K \times S^1$-invariant K\"{a}hler form on $\overline{G \times_U X}$. Then we can take $Y=\overline{G \times_U X}$ in our construction above of an $\Omega$-moment map.

While \cite{grebmiebach} is motivated partly by \cite{dorankirwan} which studies unipotent actions and their enveloping quotients in a GIT context, here we are
motivated by the results of \cite{BDHK2} which show that under suitable conditions and for suitable choices of linearisations there exist non-reductive GIT quotients  $X\env \hU$ which are better behaved (in particular having less dependence on choices) than is generally the case for unipotent actions. One can think of $X\env \hU$ as a reductive GIT quotient  of $X\env U$ by the multiplicative group $\CC^*$, where $X\env U$ corresponds to the space $\overline{Q}$ in \cite{grebmiebach} and  the analogue of the singular boundary in $\overline{Q}$ is unstable for the action of the multiplicative group, so does not appear in the quotient $X\env \hU$.

\end{remark}

Now suppose slightly more generally that $Y$ is a compact K\"ahler manifold with a holomorphic action of a complex reductive Lie group $G=K_\CC$, and that  
 $H=U \rtimes R$ is a linear algebraic subgroup of  $G$ with internally graded unipotent radical $U$. As before suppose that $\l: \CC^* \to R$ is a central one-parameter subgroup of the Levi subgroup $R$ of $H$ which grades $U$, with $\hat{U}=U\rtimes \l(\CC^*) \leqslant H$.  
Suppose also that $K \cap H=Q \in \mathcal{K}_R \subseteq  \mathcal{K}_H$ is a maximal compact subgroup of $H$ which is contained in the Levi subgroup $R=Q_\CC$. 
Then as before the Lie algebra of $\hU$ decomposes as a real vector space as
$\hat{\lieu}=\RR \oplus i\RR \oplus \lieu
$ 
where $\mathrm{Lie}(K \cap \hU)=i\RR$ and $\lieu$ is the Lie algebra of $U$, while the Lie algebra of $H$ decomposes as 
\begin{equation}  \label{decomposes} \Lie(H)=\Lie(Q) \oplus i \Lie(Q) \oplus \lieu=(\Lie(\l(S^1)) \oplus \Lie(\bar{Q})) \oplus i(\Lie(\l(S^1)) \oplus \Lie(\bar{Q})) \oplus \lieu,\end{equation}
where $\bar{Q}$ is a subgroup of $Q$ such that (at least on the level of Lie algebras) $Q$ is the product of $\bar{Q}$ and $\l(S^1)$.

 \begin{definition} \label{defn:omegaH}
Suppose that $X$ is a compact K\"ahler manifold with a holomorphic action of a complex linear algebraic group $H=U\rtimes R$ with internally graded unipotent radical $U$ as above. An $H$-equivariant K\"ahler structure on $X$ is an $H$-orbit $\Omega$ in 
$$\{ (K,\omega) \in \Omega_{G,Y}:  K\cap H \in \mathcal{K}_H \}, $$
where $Y$ is a compact K\"ahler manifold with a holomorphic action of a complex reductive Lie group $G=K_\CC$ and   a $G$-equivariant K\"ahler structure  $\Omega_{G,Y}$  such that \\
(i) $H$ is a subgroup of $G$,\\
(ii) $X$ is a complex submanifold of $Y$, and \\
(iii)  the action of $G$ on $Y$ restricts to the action of $H$ on $X$. 
 Given an $H$-equivariant K\"ahler structure $\Omega$ on $X$,  we define an \emph{$\Omega$-moment map} for the $H$-action on $X$    
 to be a smooth $H$-equivariant map 
$$\newmmxh: \Omega \times X \to \mathfrak{h}^*$$
such that 
if $a \in \mathfrak{h}$  and $\xi \in T_x X$  then the derivative at $(K,\omega,x) \in \Omega \times X$ of the complex-valued function on $\Omega \times X$ given by evaluating $\newmmxh: \Omega \times X \to \mathfrak{h}^*$ at $a$ takes $(0,  
\xi) \in T_{(K,\omega)}\Omega \times T_x X$ to 
\begin{equation} \label{one2} (  
\eta_{\omega,x}(\xi,a_x) - \eta_{\omega,x}(\iota_K(a)_x, \xi))/2i,\end{equation}
where $\eta_{\omega,x}$ is the value at $x$ of the Hermitian metric on $Y$ whose imaginary part is the $K$-invariant K\"ahler form $\omega$, while $\iota_K$ is the $K$-invariant anti-complex involution on $\mathfrak{g}$ with fixed point set $\mathfrak{k}$, and $x \mapsto b_x$ denotes the infinitesimal action of $b \in \lieg$ (cf. Lemma  \ref{lem:mommapeqn}).

\end{definition}

If an $\Omega$-moment map exists for the $H$-action on $X$, then it is uniquely determined up to the addition of an $H$-equivariant map $\Omega \times X \to \mathfrak{h}^*$ which takes a constant (and $K\cap H$-invariant) value on $\{(K,\omega)\}  \times X$ for $(K,\omega) \in \Omega$. 

By Lemma  \ref{lem:mommapeqn}, if $\newmm: \Omega_{G,Y} \times Y \to \lieg^*$ is an $\Omega_{G,Y}$-moment map for the $G$-action on $Y$ and $\newmmxg:\Omega \times X \to \lieg^*$ is its restriction to $\Omega \times X$, we have an $\Omega$-moment map $\Omega \times X \to \lieh^*$ for the $H$-action on $X$ fitting into  the following extended diagram 
\begin{equation}\label{uhatmomentmapH}
\xymatrix{\Omega \times X \ar[r]^-{\newmmxg} \ar[rd]^{\newmmxh}  &  \lieg^*=\liek^* \oplus i\liek^* \ar[d]^{p_1^*} & \\
& \mathfrak{h}^* 
 \ar[d]^{p_2^*} \ar[rd]^{p_3^*} & \\
& \hat{\lieu}^* & \Lie({R})^*=\Lie({Q})^* \oplus i\Lie({Q})}^*\\ 
\end{equation}
where 
 $R$ is the complexification of $Q=K\cap H$ for any maximal compact subgroup $K$ of $G$ which contains a maximal compact subgroup of $H$. We obtain another $\Omega$-moment map by adding to $\newmmxh$ any $H$-equivariant map $\Omega \times X \to \mathfrak{h}^*$ which is constant and $K\cap H$-invariant on $\{(K,\omega)\}  \times X$ for every $(K,\omega)$ in the $H$-orbit $\Omega$. Here  $\newmmxu=p_2^* \circ \newmmxh$, and
 the composition $p_3^* \circ \newmmxh$ restricts to a $Q$-moment map for the symplectic structure $\omega$ on $X$ identified with $ \{(K,\omega)\} \times X \subseteq \Omega \times X$, for any $(K,\omega) \in \Omega$ such that $K \cap H = Q$.

\begin{remark} \label{shift} As in Remark \ref{rem:symmetric} we can assume that the stabiliser in $G$ of $(K,\omega) \in \Omega_{G,Y}$ is the maximal subgroup $K$ of $G$, and so the stabiliser in $H$ of $(K,\omega) \in \Omega$ is the maximal compact subgroup $Q=K \cap H$ of $H$. Thus $\Omega \cong H/Q$. By $H$-equivariance, an $\Omega$-moment map for the action of $H$ on $X$ is determined by its restriction to $ \{(K,\omega)\} \times X$ for any $(K,\omega) \in \Omega$, and this restriction is determined by the condition on its derivative up to a constant in $\lieh^*$ which must be fixed by the co-adjoint action of $K\cap H$.
Moreover
$\alpha \in \lieh^*$ is invariant under the co-adjoint action of $K\cap H$ if and only if it annihilates $[\lieh,\lieh \cap \liek] = \{[\zeta,\xi]: \zeta \in \lieh, \xi \in \lieh \cap \liek\}$, and the existence of a central one-parameter of the complexification $R$ of $K\cap H$ grading the unipotent radical $U$ of $H$ implies that $[\lieh,\lieh \cap \liek]$ contains $\lieu$ and equals $[\lieh,\lieh] = \lieu \oplus [\mathfrak{r},\mathfrak{r}]$ where $\mathfrak{r} = \mathrm{Lie}(R)$. Thus
the annihilator $[\lieh,\lieh \cap \liek]^\circ$ is isomorphic to the dual $\mathfrak{z}(\mathfrak{r})^*$ of the Lie algebra of the centre $Z(R)$ of $R$.

Note that only 0 in $\lieu^*$ is fixed by the co-adjoint action of $\l(S^1)$ and therefore by the action of $K \cap H$.
\end{remark}

\begin{example} \label{implosion}
Suppose that $U$ is a maximal unipotent subgroup of a semisimple complex Lie group $G$, and that $\l:\CC^* \to G$ is a one-parameter subgroup of $G$ which grades $U$. Then $U$ is the unipotent radical of a Borel subgroup $B$ of $G$ which contains $\l(\CC^*)$, and $B=T_{\CC} U$ for a maximal torus $T_\CC$ of $G$ which is the centraliser of $\l(\CC^*)$ in $G$. Let $K$ be a maximal compact subgroup of $G$ which contains the maximal compact subgroup $\l(S^1)$ of $\hU = U \rtimes \l(\CC^*)$. Then there is a maximal compact torus $T\leqslant K$ containing $\l(S^1)$, which centralises $\l(S^1)$ and so must be the unique maximal compact torus in $T_\CC$ (with complexification $T_\CC$ as the notation suggests). This means that $K \cap B = T$ and $KB=G$. Moreover if we choose a $K$-invariant inner product on $\liek$ then the composition of the inclusion \eqref{eqn:dual} of $\liek^*$ in $\lieg^*$ with the restriction map $\lieg^* \to \lieu^*$ can be identified with the projection of $\liek^*\cong \liek$ onto the annihilator $\liet^{\perp}$ of $\liet$ in $\liek^*$, while its composition with the restriction map $\lieg^* \to \hat{\lieu}^*$ can be identified with the projection onto $\liet^{\perp} \oplus \mathrm{Lie}(\l(S^1))^*$. 

In this situation we can relate $\Omega$-moment maps to the viewpoint of symplectic implosion
\cite{implosion}. Here we assume that a holomorphic action of $\hU$ on $X$ extends to a holomorphic action of $G$ on $X$ and that we have a $G$-equivariant K\"ahler structure on $X$ containing $(K,\omega)$ with a moment map $\mu:X \to \liek^*$ for the $K$-action on $(X,\omega)$.  Hence we can take $Y=X$ in Definition \ref{defn:omegaH}, obtaining a $\hU$-equivariant K\"ahler structure $\Omega$ on $X$ and $\Omega$-moment map $\newmmxu: \Omega \times X \to \hat{\lieu}^*$. 

The symplectic implosion (or imploded cross-section) of $X$ associated to the Hamiltonian action of $K$ on $X$ is defined as follows.
The Weyl group $W = N_G(T)/T$ acts on $\liet \cong \liet^*$ which decomposes into
Weyl chambers; let 
$\liet_{+}^*$ be the positive Weyl chamber in $\liet^*$ corresponding to the Borel subgroup $B$. 
For $\zeta \in \liet^*$ let $K_\zeta = \{ k \in K|(Ad^*k)\zeta = \zeta \}$, with commutator
subgroup $[K_\zeta,K_\zeta]$. Then the imploded cross-section  of $X$ is
$${X_{impl} = \mu^{-1}(\liet^*_+)/\sim}$$
where $x \sim y$ if and only if   $x=ky$ for some $k \in [K_\zeta,K_\zeta]$
with 
$\zeta = \mu(x) = \mu(y) \in \liet^*_+.$
Over the interior points of $\liet^*_+$ no collapsing occurs since
$[K_\zeta,K_\zeta]=[T,T]$ is trivial. 
$X_{impl}$ inherits a {stratified symplectic structure and   $T$-action} with  moment map 
$X_{impl} \to \liet^*_+ \subseteq \liet^*$   induced by the restriction of $\mu$.
We have
$${X_{impl} \cong (X \times (T^*K)_{impl})/\!/K}$$
where $K$ acts on itself by left translation and on the symplectic manifold $T^*K \cong \lieks \times K \cong G$ with moment map 
$$\mu(p,q).a = p\cdot a_q \,\,\,\, \forall a \in \liek, \, q \in K, \, p\in T^*_q K = \liek^*.$$ In fact the \lq universal imploded cross-section'
$(T^*K)_{impl}$  is an affine algebraic variety over $\CC$. Indeed the geometric quotient
$G/U$ of the affine variety $G$ by right multiplication by $U$ is a quasi-affine variety whose algebra of regular functions
$\mathcal{O}(G/U) = \mathcal{O}(G)^U$ is finitely generated, so that
$G/U$ has a canonical affine completion
$$ 
{G\env U = \mathrm{Spec}(\mathcal{O}(G)^U).}$$
Guillemin, Jeffrey and Sjamaar showed in \cite{implosion} that  $G\env U$ has a $K$-invariant stratified K\"{a}hler structure which is symplectically isomorphic to the universal implosion $(T^*K)_{impl}$.
It follows that if 
 $X$ is an affine or projective variety acted on linearly by $G$ then the corresponding algebra of $U$-invariants is finitely generated with associated non-reductive GIT quotient $X \env U$ and 
$$X_{impl} \cong (X \times (G\env U))/\!/G \cong X \env U. $$
Thus in this situation the non-reductive GIT quotient $X \env U$ has a description in terms of moment maps, but this description is quite complicated. The non-reductive GIT quotient $X \env \hU = (X \env U)/\!/\CC^*$, where we allow the linearisation of the $\hU$-action to be twisted by a rational character $\eta$ or equivalently subtract $\eta$ from the moment map $\mu_{\l(S^1)}$ given by composing $\mu$ with restriction $\liek^* \to \mathrm{Lie}(\l(S^1))^*$, has an induced description as 
$$((\mu^{-1}(\liet^*_+) \cap \mu_{\l(S^1)}^{-1}(\eta))/\sim)/\l(S^1).$$ 
If $\eta$ is such that $\mu^{-1}(\liet^*) \cap \mu_{\l(S^1)}^{-1}(\eta)$ is contained in the pre-image under $\mu$ of the interior of  $\liet^*_+$  then the equivalence relation $\sim$ has no  effect, and we obtain a much simpler description  
$$ X \env \hU \cong \mu_{\hU}^{-1}(\eta)/\l(S^1)$$
of $X\env \hU$ in terms of  the \lq moment map' $\mu_{\hU}: X \to \hat{\lieu}^*$ for the $\hU$-action on $X$ obtained from the $\Omega$-moment map $\newmmxu: \Omega \times X \to \hat{\lieu}^*$ by identifying $X$ with $\{(K,\omega)\} \times X \subseteq \Omega \times X$.

\end{example}

\begin{example} \label{remarkFS}
We are particularly interested in $\Omega$-moment maps  
$\newmmxh: \Omega \times X \to {\mathfrak{h}}^*$   
 in the special case when $X$ is a nonsingular projective subvariety of $Y=\PP^n$ with a linear $H$-action on $X$ via a homomorphism $\rho: H \hookrightarrow  \GL(n+1, \CC)$, 
 as in Example \ref{example:FS}. 
Here $K$ is the unitary group $U(n+1)$ and its Lie algebra $\liek=u(n+1)$ consists of matrices $A$ such that $A=-\bar{A}^T$. 
We can assume that a maximal compact subgroup $Q$  of $H$ containing the circle $\l(S^1) \leqslant \hU$ preserves the standard
$U(n+1)$-invariant hermitian inner product $\langle  \, ,\rangle$ on $\CC^{n+1}$.
We can use the corresponding Fubini-Study form $\omega$ on $\PP^n$ to define a $G$-equivariant K\"ahler structure $\Omega_{G,\PP^n}$ on $\PP^n$ containing $(K,\omega)$ and an $\Omega$-moment map  $\newmmxh: \Omega \times X \to \hat{\lieh}^*$ whose restriction $\mu^H_{(K,\omega)}: X \to \lieh^*$ to $X$ identified with $\{(K,\omega)\} \times X \subseteq \Omega \times X$ is given, up to the addition of a $K \cap H$-invariant constant, by
\[\mu^H_{(K,\omega)}([x]) \cdot a =\frac{1}{2\pi i ||x||^2} \bar{x}^T \rho_*(a)x \in \CC \text{ for all } a \in H\]
where $\cdot$ denotes the natural pairing between the complex dual of the Lie algebra $\hat{\mathfrak{u}}$ and the Lie algebra itself (cf. Example \ref{example:FS}).
The real and imaginary parts can be written as
\[\mathrm{Re} (\mu^H_{(K,\omega)}([x]) \cdot a) =\frac{1}{4\pi i ||x||^2}(\bar{x}^T (\rho_*(a)-\overline{\rho_*(a)}^T )x\ ,\ \   \mathrm{Im} (\mu_{\hU}([x]) \cdot a)=\frac{1}{4\pi i ||x||^2}\bar{x}^T (\rho_*(a)+\overline{\rho_*(a)}^T )x,\]
where $ \overline{\rho_*(a)}^T = - \rho_*(\iota_K(a))$,
and we can check directly that \eqref{one}   
 is satisfied.

Let $\xi \in T_x S^{2n+1}$ be a tangent vector to the unit sphere $S^{2n+1} \subseteq \CC^{n+1}$ at $x\in S^{2n+1}$ representing the tangent vector $[\xi]$ at $[x] \in \PP^n$. Then 
\begin{equation}\label{complexmoment} 
2\pi i \, d\mu^H_{(K,\omega)}([x])([\xi])\cdot a=\bar{x}^T \rho_*(a)\xi+\bar{\xi}^T \rho_*(a)x=\langle \overline{\rho_*(a)}^T x,\xi \rangle + \langle \xi, \rho_*(a)x \rangle
\end{equation}
where $\langle \, , \rangle$  denotes the standard Hermitian inner product 
 satisfying $\langle \xi,\chi \rangle=\bar{\xi}^T \chi=\overline{\langle \chi,\xi \rangle}$. 
\end{example}

Motivated by this example, we will give a series of definitions, modelled on the algebraic situation, for holomorphic actions on compact K\"ahler manifolds  of  complex linear algebraic groups $H=U\rtimes R$ with internally graded unipotent radical $U$ and grading one-parameter subgroup $\l:\CC^* \to Z(R)$, when there is  an $H$-equivariant K\"ahler structure and  an {$\Omega$-moment map}.

\begin{definition} \label{cond starK} Suppose that $X$ is a compact K\"ahler manifold with a holomorphic action of a complex linear algebraic group $H=U\rtimes R$ with internally graded unipotent radical $U$ and grading one-parameter subgroup $\l:\CC^* \to Z(R)$. 
Let $\Omega \subseteq \Omega_{G,Y}$ be an $H$-equivariant K\"ahler structure  on $X$ with respect to an inclusion $X \subseteq Y$ where the $H$-action on $X$ extends to an action of a reductive group $G$ on $Y$. Let $(K,\omega) \in \Omega$ and let 
$\newmmxh: \Omega \times X \to \mathfrak{h}^*$ be
 an {$\Omega$-moment map} for the $H$-action on $X$ so that 
 $\mu^{\l(S^1)}_{(K,\omega)}(x) = \newmmxh(K,\omega,x).\l_*(1)$ defines a moment map for the action of $\l(S^1)$ on $X$.  Then define
\[
Z_{\min}(X)=\left\{
\begin{array}{c|c}
\multirow{2}{*}{$x \in X$} & \text{$x$ is fixed by $\l(\GG_m)$ and} \\ 
 & \text{$\mu^{\l(S^1)}_{(K,\omega)}(x)$ is the minimal value taken by $\mu_{\l(S^1)}$ on $X$    }
\end{array}
\right\}
\]
and
\[
X_{\min}^0:=\{x\in X \mid     
\mathcal{P}(x) \mbox{ has a limit point in } Z_{\min}(X) \}  \quad \mbox{ where } \quad  \mathcal{P}(x) = \{t \cdot x\mid t\in \CC^*\}  \quad \mbox{ for } x \in X.
\]   
We will say that \emph{semistability coincides with stability} for the action of $\hU$  on $X$, 
with respect to the equivariant K\"ahler structure
$\Omega \subseteq \Omega_{G,Y}$  on $X$ and the restriction to $\hU$ of the {$\Omega$-moment map}
$\newmmxh: \Omega \times X \to \mathfrak{h}^*$, if $Z_{\min}(X) \subseteq Z_{\min}(Y)$ and 
\begin{equation}\label{starK}
 \stab_{U}(z) = \{ e \}  \textrm{ for every } z \in Z_{\min}(X). 
\end{equation}
\end{definition}

\begin{remark}\label{remss=s} 
\begin{enumerate}[(i)]
\item Note that the grading condition implies that $X^0_{\min}$ is $U$-invariant.
\item In the algebraic situation of Example \ref{example:FS}  the limit $p(x) =  \lim_{\substack{ t \to 0\\ t \in \GG_m }} t \cdot x$ exists for any $x \in X$, and then
$$X_{\min}^0 =\{x\in X \mid     p(x)  \in Z_{\min}(X)\}. 
$$
The same is in fact true more generally, by \cite{atiyah} (cf. Remark \ref{morsebott}). This is related to the fact (see \cite{fujiki}) that $\CC^*$-actions on compact K\"ahler manifolds which restrict to Hamiltonian $S^1$-actions are meromorphic in the sense of Remark \ref{GrebMiebach}. The results of \cite{atiyah} also show that the complement of $X_{\min}^0$ in $X$ has real codimension at least two, and so $X$ is connected if and only if $X_{\min}^0$ is connected, in which case $Z_{\min}(X)$ is connected.

As in the algebraic situation of 
Definition \ref{cond star} and Example \ref{remarkFS}, the condition \eqref{starK} holds if and only if we have $\stab_{U}(x) = \{e\}$ for all $x \in X^0_{\min}$, because $\stab_{U}(x) = \{e\}$ is an open condition on $x\in X$ and is invariant under the action of $\l(\CC^*)$ as $\l(\CC^*)$ normalises $U$. Similarly if $H'$ is a subgroup of $H$ containing $U$ then $\l(\CC^*)$ normalises $H'$ and $\stab_{H'}(x) = \{e\}$ if $\stab_{H'}(p(x))= \{e\}$ for any $x \in X^0_{\min}$.

\item In the algebraic situation of Example \ref{remarkFS}, the condition $Z_{\min}(X) \subseteq Z_{\min}(Y)$ will always be satisfied when $Y=\PP^n$ is the projective space associated to the space of sections of a very ample line bundle on $X$ defining a linearisation of the $H$-action.
\end{enumerate}
\end{remark}

\begin{example}
Recall from $\S$\ref{subsectionexample} the case of a linear  action of $\hat{U}=\hU^{[2\ell]}= U \rtimes  \CC^*$ on $X=\PP(V)$ 
where $U = \CC^+$ and $\CC^*$ acts on $\Lie(U)$ with weight $2\ell$. 
Here we saw that the representation of $\hU$ on $V$ extends to a representation of $\SL(2,\CC) \times \CC^*$, so we can construct a $\hU$-equivariant K\"ahler structure
$\Omega \subseteq \Omega_{G,Y}$  on $X$ and an {$\Omega$-moment map}
$\newmmxu: \Omega \times X \to \hat{\lieu}^*$ by setting $Y=X$ and $G=\SL(2,\CC) \times \CC^*$, and using a Fubini--Study K\"ahler form on $\PP(V)$ which is $\SU(2) \times S^1$-invariant. Here the quotient $X\env \hU$ described from an algebraic viewpoint in $\S$\ref{subsectionexample} can be described in terms of symplectic implosion and thus in terms of 
the {$\Omega$-moment map}
$\newmmxu: \Omega \times X \to \hat{\lieu}^*$   as in Example \ref{implosion}.
In this case the requirement that $Z_{\min}(X) \subseteq Z_{\min}(Y)$ in Definition \ref{cond starK} is of course satisfied.

We could also choose $Y=P(W)$ where $W$ is a finite-dimensional representation of $\hU$ with $V$ as a $\hU$-submodule. Then the  representation of $\hU$ on $W$ extends to a representation of $G= \SL(2,\CC) \times \CC^*$, and we can construct another $\hU$-equivariant K\"ahler structure
$\Omega \subseteq \Omega_{G,Y}$  on $X$ and an {$\Omega$-moment map}
$\newmmxu: \Omega \times X \to \hat{\lieu}^*$ using this choice of $Y$ and $G= \SL(2,\CC) \times \CC^*$.
However in general $V$ fails to be a $G$-submodule of $W$ and the requirement that $Z_{\min}(X) \subseteq Z_{\min}(Y)$ in Definition \ref{cond starK} is not satisfied (for example if $V \cong \sym^k(\CC^2)$ for some $k\geqslant 1$ then we could take $W=\sym^m(\CC^2)$ for $m>k$). 
This means that well-adaptedness for $Y$ is not necessarily the same as well-adaptedness for $X$, and the analysis in $\S$\ref{subsectionexample} (or consideration of symplectic implosion as in Example \ref{implosion})
 shows that $(\{[1:1:0]\} \times \PP(V)) \cap
(\PP^2 \times \PP(W))^{ss,G \times \CC^*}$  does not necessarily coincide with  $\PP(V)^{\rms,\hU}$; in particular this intersection may be empty when $\PP(V)^{\rms,\hU}$ is not.

\end{example}

\begin{definition}\label{def:welladaptedactionK} 

A {\it Hamiltonian action} of a linear algebraic group $H$ on a compact K\"ahler manifold $X$ is given by
a holomorphic action of $H$ on $X$ with an $H$-equivariant K\"ahler structure $\Omega$ and an $\Omega$-moment map $\newmmxh: \Omega \times X \to \lieh^*$. 
If 
$(K,\omega) \in \Omega$ let
$$\weight_{\min} = \weight_0 < \weight_1 < \ldots $$
be the values taken on the fixed point set $X^{\l(S^1)}$ for the $\l(S^1)$-action on $X$ by the composition 
$$\mu^{\l(S^1)}_{(K,\omega)}: X \to \mathrm{Lie}(\l(S^1))^* \cong \RR$$ of $\lieh^* \to \mathrm{Lie}(\l(S^1))^*$  with the 
 restriction $\mu^H_{(K,\omega)}: X \to \lieh^*$ of $\newmmxh:\Omega \times X \to \lieh^*$ to $X$ identified with $\{(K,\omega)\} \times X$. We will say that the Hamiltonian action is \emph{borderline-adapted} if $\weight_{\min} = 0$ and \emph{well-adapted} if $\weight_{\min} < 0 < \weight_{\min} + \epsilon$ for sufficiently small $ \epsilon> 0$; how small $\epsilon$ should be will be discussed in $\S$5.4. 

\end{definition}

\begin{remark} \label{shift2} 
Suppose we have a {Hamiltonian action} of a linear algebraic group $H$ on a compact K\"ahler manifold $X$  given by
a holomorphic action of $H$ on $X$ with an $H$-equivariant K\"ahler structure $\Omega$ and an $\Omega$-moment map $\newmmxh: \Omega \times X \to \lieh^*$ as in Definition \ref{def:welladaptedactionK}.

Note that $\mu^{\l(S^1)}_{(K,\omega)}: X \to \mathrm{Lie}(\l(S^1))^*$
 takes the constant value $\weight_{\min}$ on $Z_{\min}(X)$. 
 Note also that by Remark \ref{shift}, since $\l(S^1)$ is central in $K \cap H$, we can always adjust the $\Omega$-moment map $\newmmxh$ to add any fixed constant to  the values 
$\weight_{\min} = \weight_0 < \weight_1 < \ldots $
taken on the fixed point set $X^{\l(S^1)}$ for the $\l(S^1)$-action on $X$ by the composition 
$\mu^{\l(S^1)}_{(K,\omega)}: X \to \mathrm{Lie}(\l(S^1))^*$ of $\lieh^* \to \mathrm{Lie}(\l(S^1))^*$  with the 
 restriction $\mu^H_{(K,\omega)}: X \to \lieh^*$ of $\newmmxh:\Omega \times X \to \lieh^*$ to $X$ identified with $\{(K,\omega)\} \times X$, and thus ensure that $\weight_{\min} <0 < \weight_1 $, or indeed that $0 \in (\weight_{\min}, \weight_1)$ is arbitrarily close to $\weight_{\min}$.
 \end{remark}
 
 Let 
\begin{equation} \label{mmaprmodl}
\mu_{(K,\omega)}^{R/\l(\CC^*)} : Z_{\min}(X) \to \Lie(R/\l(\CC^*))^*
\end{equation}
be given by composing the restriction of $\mu^H_{(K,\omega)}$ to $Z_{\min}(X)$ with the restriction map $ \lieh^* \to \Lie(R)^*$ and identifying $\Lie(R/\l(\CC^*))^*$ first with the kernel of $ \Lie(R)^* \to \Lie(\l(\CC^*))^*$ and then by translation with the pre-image of $\weight_{\min}$ in $ \Lie(R)^*$. Then $\mu_{(K,\omega)}^{R/\l(\CC^*)}$ takes values in the dual of the Lie algebra of the maximal compact subgroup $Q/\l(S^1)$ of $R/\l(\CC^*)$, and can thus be identified with a moment map $$\mu_{(K,\omega)}^{Q/\l(S^1)} : Z_{\min}(X) \to \Lie(Q/\l(S^1))^*$$ 
 for the induced action of $Q/\l(S^1)$ on $Z_{\min}(X)$ with the K\"ahler form $\omega$. .

\begin{definition} \label{cond starKH}[$H$-stability=$H$-semistability]
As in Definition \ref{def:s=ss} there is a strong version and a weak version of the condition that $H$-stability coincides with $H$-semistability in this setting.
We will say that \emph{ semistability coincides with stability in the strong sense} for a well-adapted Hamiltonian action of $H$  on $X$ if 
semistability coincides with stability for the action of $\hU$  on $X$ in the sense of Definition \ref{cond starK}, and
in addition 0 is a regular value for the moment map 
\[\mu_{(K,\omega)}^{Q/\l(S^1)} : Z_{\min}(X) \to \Lie(Q/\l(S^1))^*\]
 for the induced action of $Q/\l(S^1)$ on $Z_{\min}(X)$ with the K\"ahler form $\omega$. 

 \label{weaks=ss} 
We will say that \emph{semistability coincides with stability in the weak sense} for a well-adapted Hamiltonian action of $H$  on $X$ if 
semistability coincides with stability for the action of $\hU$ on $X$ in the sense of Definition \ref{cond starK}, and
in addition $0$ is a regular value for the moment map 
\[\mu_{(K,\omega)}^{Q/\l(S^1)} : (\mu_{(K,\omega)}^{\hU})^{-1}(0) \to \Lie(Q/\l(S^1))^*\] 
 for the induced action of $Q/\l(S^1)$ on $(\mu_{(K,\omega)}^{\hU})^{-1}(0)$ with the K\"ahler form $\omega$.
\end{definition}

\begin{remark} \label{remark:ss=s} As in the algebraic situation of Definition \ref{def:s=ss}, semistability coinciding with stability in the strong sense implies that semistability coincides with stability in the weak sense, and in this paper we will always assume the strong version of this condition. In the setting of Example \ref{remarkFS} we know that 0 is a regular value for the moment map 
$\mu_{(K,\omega)}^{Q/\l(S^1)}$ for the induced action of the compact group $Q/\l(S^1)$ on $Z_{\min}(X)$ if and only if the stabiliser in $Q/\lambda(S^1)$ of every point in $(\mu_{(K,\omega)}^{Q/\l(S^1)})^{-1}(0)$ is finite, and this happens if and only if 
semistability coincides with stability for the induced linear action of its complexification, the reductive group $R/\l(\CC^*)$ (see \cite{francesthesis} $\S$5.5, Thm 7.4 and Remark 8.14, \cite{Ness} Thm 2.2). Thus in this setting, semistability coincides with stability for the action of $H$ in the strong sense of Definition \ref{cond starKH} if and only if semistability coincides with stability for the action of $H$ in the strong sense of Definition \ref{def:s=ss}.
 \end{remark}

\begin{remark} \label{cond starK2} 

Note that if $X^0_{\min} = UZ_{\min}(X)$ then the fibres of the $\hU$-invariant map $p :X^0_{\min} \to Z_{\min}(X)$ are single $\hU$-orbits and indeed single $U$-orbits. So in this situation we can regard $Z_{\min}(X)$ as a quotient of $X^0_{\min}$ by $U$ and by $\hU$, and we can regard $Z_{\min}(X)\env R$ as a quotient of $p^{-1}(Z_{\min}(X)^{ss,R})$ by $H$. 
\end{remark}

\subsection{Zero loci for moment maps} 

In this section we suppose that $X$ is a compact K\"ahler manifold with a holomorphic action of a complex linear algebraic group $H=U\rtimes R$ whose Levi subgroup $R$ has a central one-parameter subgroup $\l:\CC^* \to Z(R)$
which grades the 
 unipotent radical $U$ of $H$. 
Let $\Omega$ be an $H$-equivariant K\"ahler structure  on $X$, let $(K,\omega) \in \Omega$ and let $\newmmxh: \Omega \times X \to \mathfrak{h}^*$ be an {$\Omega$-moment map} for the $H$-action on $X$ in the sense of Definition \ref{defn:omegaH}. Let $\mu^{H}_{(K,\omega)}: X \to \lieh^*$ be the   restriction 
 of $\newmmxh:\Omega \times X \to \lieh^*$ to $X$ identified with $\{(K,\omega)\} \times X$, 
and let $\mu^{\hU}_{(K,\omega)}:X \to \hat{\lieu}^*$  and $\mu^{U}_{(K,\omega)}:X \to \lieu^*$ be the compositions of $\mu^{H}_{(K,\omega)}$ with the restriction maps $\lieh^* \to \hat{\lieu}^*$
and $\lieh^* \to \lieu^*$.

\begin{lemma} \label{zminumoment}
Suppose that semistability coincides with stability for the action of $H$, in the sense of Definitions \ref{def:welladaptedactionK}  and \ref{cond starKH}, and  that $(K,\omega) \in \Omega$. Then
$\mu^{U}_{(K,\omega)}:X \to \lieu^*$ is constant on $Z_{\min}(X)$ with value 0.
\end{lemma}
\proof
Since $\omega$ is $\l(S^1)$-invariant and $\l(S^1) \leqslant K$ fixes each $y \in Z_{\min}(X)$, by Definition \ref{cond starK} $\mu^{U}_{(K,\omega)}(y)$ is fixed by the coadjoint action of $\l(S^1)$ on $\lieu^*$. The grading condition implies that if $\zeta \in \lieu^*$ is fixed by  the coadjoint action of $\l(S^1)$ then $\zeta$ is 0, so $\mu^{U}_{(K,\omega)}(y)=0$ as required.
\qed

\begin{remark} \label{rem:zminumoment} We can see that $\mu^{U}_{(K,\omega)}:X \to \lieu^*$ is constant on $Z_{\min}(X)$ from the differential equation \eqref{one2} without using equivariance, as follows.
The condition that $\l(\CC^*)$ acts on $\lieu$ with only strictly positive weights 
 ensures that it acts on $\iota_K(\lieu)$ with only strictly negative weights. Thus 
  if $y \in Z_{\min}(X)$ and $a \in \lieu$ then $\iota_K(a)_y$ belongs to the sum of the weight spaces in $T_y Z_{\min}(X)$ with strictly negative weights, and this is 0 by the definition of $Z_{\min}(X)$.
  Hence
$\eta_{\omega,y}(\iota_K(a)_y, \xi) = 0$  for all $\xi \in T_yX$, and so by \eqref{one2}
 the derivative at  $y$  of the complex conjugate of the complex-valued function on $Z_{\min}(X)$ given by evaluating $2i\mu^{U}_{(K,\omega)}$ at $a$ takes $(0,  
\xi)$ to 
$
\eta_{\omega,x}(a_x,\xi)$, which is complex linear in $\xi$. Thus this function is holomorphic on the compact K\"ahler manifold $Z_{\min}(X)$, and so is constant, with zero derivative.
\end{remark}

\begin{remark} \label{rem:constant}
The vector field $x \mapsto \l_*(s)_x$ on $X$ induced by any $\l_*(s) \in \l_*(\mathrm{Lie}(S^1))$ restricts to zero on $Z_{\min}(X)$.
 The grading condition implies that $\lieu = [\l_*(\RR),\lieu]$ where we identify $\RR$ 
  with the Lie algebra of $S^1$. 
  As $\omega$ is $\l(S^1)$-invariant, 
  this gives us another way to see that 
   ${\eta_{\omega,y}(b_y,\xi)}= 0$ if $y \in Z_{\min}(X)$ and $\xi \in T_y Z_{\min}(X)$ and $b \in \lieu$.
  \end{remark}
  
 \begin{lemma} \label{borel}
  If $b \in \hat{\lieu} \setminus \lieu$ then $b \in \mathrm{Ad}(U)(\lambda_*(\mathrm{Lie}(\CC^*)))$.
  If $g \in U \rtimes \l(\RR^{>0}) \setminus U$ then $g = u\l(t) u^{-1}$ for some $u \in U$ and $t \in \RR^{>0}$.
  \end{lemma}
  \proof
  Suppose that $c,d \in \lieu$ and $z \in \CC$ satisfy $[\lambda_*(z) + c,d]=0$. Then $[\lambda_*(z),d] = [d,c]$, and so it follows from the grading condition on $\lambda: \CC^* \to \hU$ that if $d \neq 0$ then $z=0$ and $[d,c]=0$.
  
  Now if $b \in \hat{\lieu}$ then by Jordan decomposition $b = b_s + b_n$ where $b_s \in \hat{\lieu}$ is semisimple, $b_n \in \hat{\lieu}$ is nilpotent and $[b_s,b_n]=0$. Since $U$ is the unique maximal unipotent subgroup of $\hU$, we have $b_n \in \lieu$ and $b_s = \lambda_*(z) + c$ where $c \in \lieu$ and $z \in \CC$ and $[\lambda_*(z) + c,b_n]=0$. As above this implies that either $z=0$, so that $b \in \lieu$, or $b_n=0$ so that $b$ is semisimple. 
  
  Thus if $b \in \hat{\lieu} \setminus \lieu$ then $b$ is semisimple, and so $b$ lies in the adjoint orbit of an element of the Lie algebra of the maximal torus $\lambda(\CC^*)$ of $\hU$.
  
  Similarly if $g \in \hU_{\RR} = U \rtimes \l(\RR^{>0})$ then by the multiplicative Jordan decomposition $g = g_s g_u$ where $g_s$ is semisimple, $g_u$ is unipotent and $g_s g_u = g_u g_s$. As in the Lie algebra setting, $g_u \in U$ and $g_s = \l(t)u$ for some $t \in \RR^{>0}$ and $u \in U$. Then $g_u \l(t) u = \l(t) u g_u$ so $g_u = \l(t) u g_u u^{-1} \l(t)^{-1}$. By the grading condition this implies that either $t^r = 1$ for some positive integer $r$, and hence $t=1$, so that $g \in U$, or $g_u = 1$ so that $g$ is semisimple and hence conjugate to $\l(t)$ for some $t \in \RR^{>0}$.
  \qed

 \begin{lemma} \label{lem:x0min}
Suppose that semistability coincides with stability for the action of $H$, in the sense of Definitions \ref{def:welladaptedactionK}  and \ref{cond starKH}, and  that $(K,\omega) \in \Omega$. Then
\[  X^0_{\min} \setminus U Z_{\min}(X)  =   
\{x\in X^0_{\min}: \dim \Stab_{\hU}(x)=0\}.\]
\end{lemma}
\proof
Recall from 
Remark \ref{remss=s}(ii) that $\stab_{U}(x) = \{e\}$ for all $x \in X^0_{\min}$. By Lemma \ref{borel}, this implies that $\dim \Stab_{\hU}(x) > 0$ for $x \in X^0_{\min}$ if and only if $x$ is fixed by a conjugate of $\l(\CC^*)$ in $\hU$. By the definition of $X^0_{\min}$, the only points in $X^0_{\min}$ fixed by $\l(\CC^*)$ are those in $Z_{\min}(X)$. 
\qed

\medskip

Crucial ingredients in the proof of Theorem \ref{thm:mainA} 
are the slice theorems (Propositions \ref{sliceU}, \ref{sliceUhat} and \ref{sliceH} below), 
and the following simple geometric statement is used in the proof of these. 

\begin{lemma}[The gradient trick]\label{gradient}  Let $X$ be a smooth manifold acted on by a Lie group $H$, let $a\in \Lie(H)$ and let $Y \subseteq f^{-1}(0) \subset X$ be a union of finitely many compact connected components of  the vanishing set of a smooth map $f: X \to \RR$. \\
(i) Assume  that $df(a_x)>0$ holds for all $x\in Y$. If $y\in Y$ and $\exp(ta)y \in Y$ for some $t \in \RR$, then $t=0$. \\
(ii) Now assume that $g:X \to \RR$ is a smooth map with minimum value $\b_{\min}$ on $X$, and that 
 $\b>\b_{\min}$ is such that $df(a_x)>0$ for all $x\in Y \cap g^{-1}([\b_{\min},\b])$ and $dg(a_x)>0$ for all $x\in g^{-1}((\b_{\min},\b])$. If  $y\in Y \cap g^{-1}((\b_{\min},\b])$ and $\exp(ta)y \in Y \cap g^{-1}((\b_{\min},\b])$ for some $t \in \RR$, then $t=0$. 
\end{lemma} 
\proof
(i) By reversing the roles of $y$ and $\exp(ta)y$ if necessary, it is enough to prove that if  $y\in Y$ and $\exp(ta)y \in Y$ for some $t \geq 0$, then $t=0$. 

So suppose $y \in Y$. By continuity there is an open neighbourhood $V$ of $Y$ in $X$ such that $df(a_x) > 0$ 
 for all $x \in V$.   
Then
\begin{equation} \frac{d}{dt} (f(\exp(ta) y) = df(a_{\exp(ta)y})  > 0 
\label{Vbar}
 \end{equation}
whenever $\exp(ta)y \in V$. 
Thus $f(\exp(ta)y)$ takes the value 0 when $t=0$ and is a strictly increasing function of $t$ near 0. 
 So if $\exp(ta)y \in Y$ for some $t>0$ then, 
 by taking $t_1$ to be the infimum of such $t$, we find that $t_1  > 0$ and  $f(\exp(sa)y) > 0$ for all $s\in (0,t_1)$, but $\exp(t_1 a)y \in Y$ and so $f(\exp(t_1 a)y) = 0$, contradicting \eqref{Vbar}. Hence if $t>0$ then 
  $\exp(ta)y \not\in Y$, as required.

(ii) The proof of (i) can be modified slightly as follows. 
Again by reversing the roles of $y$ and $\exp(ta)y$ if necessary, it is enough to prove that if  $y\in Y \cap g^{-1}((\b_{\min},\b])$ and $\exp(ta)y \in Y \cap g^{-1}((\b_{\min},\b])$ for some $t \geq 0$, then $t=0$. 

If $g(\exp(t_0 a)y) =\b$ for some $t_0 \geq 0$ then $g(\exp(s a)y) > \b$ for all $s>t_0$ near $t_0$, since $dg(a_x)>0$ for all $x\in g^{-1}((\b_{\min},\b])$ and hence for $x= \exp(t_0 a)y$. So if there is some $t_1 > t_0$ such that $g(\exp(t_1 a)y) =\b$, then we can choose $t_1$ to be minimal and hence assume that $g(\exp(s a)y) > \b$ for all $s \in (t_0,t_1)$, contradicting the assumption that $dg(a_x)>0$ for $x= \exp(t_1 a)y$. So if $g(\exp(t_0 a)y) = \b$ for some $t_0\geq 0$ and $t >t_0$, then $g(\exp(t a)y) > \b$ and hence $\exp(ta)y \not\in Y \cap g^{-1}((\b_{\min},\b])$. Thus it is enough to prove that if  $y\in Y \cap g^{-1}((\b_{\min},\b])$ and $\exp(ta)y \in Y \cap g^{-1}((\b_{\min},\b])$ and $g(\exp(sa)y) < \b$ for all $s\in [0,t]$ where $t\geq 0$, then $t=0$. 

So let us assume for a contradiction that $t>0$ and $y\in Y \cap g^{-1}((\b_{\min},\b])$ and $\exp(ta)y \in Y \cap g^{-1}((\b_{\min},\b])$ and $g(\exp(sa)y) < \b$ for all $s\in [0,t]$.
By continuity there is an open 
neighbourhood $V$ of $Y  \cap g^{-1}([\b_{\min},\b])$ in $g^{-1}[\b_{min},\b]$ such that $df(a_x) 
 >0$ for all $x \in \overline{V}$. 
 By compactness we can choose $V$ to be a union of finitely many connected components of $(f,g)^{-1}((-\epsilon,\epsilon) \times [\b_{\min},\b])$ for some $\epsilon > 0$, and by Sard's theorem we can assume that  $\pm \epsilon$ are regular values of  the restriction of $f$ to $g^{-1}([\b_{\min},\b))$. Since $\overline{V} \subseteq g^{-1}[\b_{min},\b]$ and $y\in Y \cap g^{-1}((\b_{\min},\b])$, both  $f(\exp(sa)y)$ and $g(\exp(sa)y)$ are strictly increasing functions of $s$ with strictly positive derivatives so long as $s \geq 0$ and  $\exp(sa)y \in \overline{V}$. But by assumption $\exp(ta)y \in Y$ so $f(\exp(ta)y) = 0 = f(y)$.
 So we must have $\exp(t_0a)y) \not\in \overline{V}$ for some $t_0 \in (0,t)$, while $y \in Y \subseteq V$ and $\exp(ta)y \in Y \subseteq V$. 
 
 Thus 
  there exist $t_1 \in (0,t_0)$ and 
 $t_2 \in (t_0,t)$ such that 
 $\exp(t_1 a)y \in \partial V$ and $\exp(t_2 a)y \in \partial V$. Since $\b_{\min} < g(\exp(sa)y) < \b$ for all $s\in [0,t]$, this means that $f(\exp(t_1 a)y) = \pm \epsilon$ and $f(\exp(t_2 a)y) = \pm \epsilon$.
 Also  the function $s \mapsto f(\exp(sa)y)$ has strictly positive derivative when $s \in \overline{V}$ (in particular when $s = t_1$ and when $s=t_2$), and takes the value 0 when $s=0$. Hence by choosing $t_1$ minimal such that $t_1 > 0$ and  $\exp(t_1 a)y \in \partial V$, we can assume that $f(\exp(t_1 a)y) =  \epsilon$ and that $f(\exp(s a)y) > \epsilon$ when $s>t_1$ is near $t_1$. Now we can choose $t_2$ minimal such that $t_2 > t_1$ and  $\exp(t_2 a)y \in \partial V$, and then by the intermediate value theorem $f(\exp(t_2 a)y) =  \epsilon$, while $f(\exp(s a)y) > \epsilon$ when $s \in (t_1,t_2)$. This gives us a contradiction because  the function $s \mapsto f(\exp(sa)y)$ has strictly positive derivative when  $s=t_2$.
 
\qed

\medskip

\begin{remark} \label{lem:delta}
If $\delta>0$ 
 then the open subset $(\mu^{\l(S^1)}_{(K,\omega)})^{-1}(-\infty, \weight_{\min} + \delta)$ of $X$ is a neighbourhood of $Z_{\min}(X)$, and   
\[Z_{\min}(X)= \bigcap_{\delta > 0} \,\, (\mu^{\l(S^1)}_{(K,\omega)})^{-1}(-\infty, \weight_{\min} + \delta). \]
Since $\mu^{\l(S^1)}_{(K,\omega)}$ is a Morse--Bott function \cite{atiyah} with gradient flow along the orbits of $\l(\RR^*)$  (see \eqref{doubledagger}) and minimum $Z_{\min}(X)$, if $\delta_0 >0$ is sufficiently small then  $(\mu^{\l(S^1)}_{(K,\omega)})^{-1}(\weight_{\min} + \delta_0)$ is a closed $\l(S^1)$-invariant submanifold of $X$, and the restriction
$$(0,\infty)  \times (\mu^{\l(S^1)}_{(K,\omega)})^{-1}(\weight_{\min} + \delta_0) \to (\mu^{\l(S^1)}_{(K,\omega)})^{-1}(\weight_{\min}, \weight_{\min} + \delta_0) = (\mu^{\l(S^1)}_{(K,\omega)})^{-1}(-\infty, \weight_{\min} + \delta_0) \setminus Z_{\min}(X)$$
of $m:(0,\infty) \times X \to X$ defined by $(t,x) \mapsto \l(e^{-2\pi t})x$ is a diffeomorphism, and for $x \in (\mu^{\l(S^1)}_{(K,\omega)})^{-1}(\weight_{\min} + \delta_0)$ the map
$(0,\infty) \to (\weight_{\min}, \weight_{\min} + \delta_0)$ defined by
$$t \mapsto \mu^{\l(S^1)}_{(K,\omega)}(\l(e^{-2\pi t})x)$$
is an orientation-reversing diffeomorphism (so it has strictly negative derivative everywhere).
\end{remark}

\begin{lemma} \label{claim} Suppose that semistability coincides with stability for the action of $\hat{U}$, in the sense of Definition 
\ref{cond star}, and  that $(K,\omega) \in \Omega$. 
Fix a norm $|| \cdot ||$ on the Lie algebra of $H = U \rtimes R$ which is invariant under the adjoint action of $\l(S^1)$.
Given any sufficiently small $\delta_0 >0$, there exist $E_0 \ge \varepsilon_0 >0$ and $E_1 \ge \varepsilon_1 >0$ and $E_2 <1$
 such that if $0<\delta \leq \delta_0$ and $a \in \mathrm{Lie} R$ and  $b \in \mathrm{Lie} U$ then
$$
\varepsilon_0||b||\sqrt{\delta} \le d\mu^{\l(S^1)}_{(K,\omega)}(y)(b_y)\le E_0||b||\sqrt{\delta} $$ and $$\varepsilon_1 {\delta} \le \eta_{\omega,y}(\l_*(1)_y,\l_*(1)_y) = ||\l_*(1)_y||_\eta^2  \le E_1 {\delta}$$
and     $$|\eta_{(K,\omega)}(\l_*(1)_y,a_y)|  \le E_2 || \l_*(1)_y||_\eta  ||a_y||_\eta $$ 
whenever $y \in (\mu^{\l(S^1)}_{(\omega,y)})^{-1}(\weight_{\min} + \delta)$, where $||\xi||_\eta = \sqrt{\eta_{\omega,y}(\xi,\xi)}$ for any $\xi \in T_yX$. 
\end{lemma}
\proof For $x \in X_{\min}^0$ and $z \in \CC$ we let 
\[\varphi(x,z)=\begin{cases} \l(z)x & z\neq 0 \\ \lim_{t\to 0}\l(t)x & z=0 \end{cases}\]
As in Remark \ref{lem:delta}, $\mu^{\l(S^1)}_{(K,\omega)}$ is a Morse--Bott function  with gradient flow along the orbits of $\l(\RR^*)$ and $Z_{\min}(X)$ is its minimum. 
So the closure in $X^0_{\min}$ of the downwards gradient flowline for  $\mu^{\l(S^1)}_{(K,\omega)}$ from $x \in X^0_{\min} \setminus Z_{\min}(X)$ is 
$$ \{ \phi(x,t): t \in [0,1]\}.$$
Moreover
$\varphi: X_{\min}^0 \times \CC \to X_{\min}^0$ is smooth, $\CC^*$-equivariant, and
\[\mu^{\l(S^1)}_{(K,\omega)}(\varphi(x,t))=\omega_{\min}+\psi(x,t)^2 \text{ for } t \in \RR, x\in X_{\min}^0 \setminus Z_{\min}(X),\]
where $\psi: (X_{\min}^0 \setminus Z_{\min}(X)) \times \RR \to \RR$ is smooth and $t \mapsto \psi(x,t)$ is an orientation preserving diffeomorphism of $\RR$ for all $x \in X_{\min}^0 \setminus Z_{\min}(X)$ so \[\frac{\partial \psi}{\partial t}(x,t)>0 \text{ for all } t \in \RR, x \in X_{\min}^0 \setminus Z_{\min}(X),\]
and $$\psi(x,-t)=-\psi(x,t) \text{ for all } t \in \RR, x \in X_{\min}^0 \setminus Z_{\min}(X).$$ Then since $\mu^{\l(S^1)}_{(K,\omega)}$ is $\l(S^1)$-invariant, we have 
\begin{equation}\label{morsebott}
\mu^{\l(S^1)}_{(K,\omega)}(\varphi(x,z))=\omega_{\min}+\psi(x,|z|)^2 \text{ for } z \in \CC, x\in X_{\min}^0 \setminus Z_{\min}(X).
\end{equation}
Fix $\delta_0>0$ such that $\omega_{\min}+\delta_0<\omega_1$. Then for all $x \in X_{\min}^0 \setminus Z_{\min}(X)$ there is a unique $\l(S^1)$-orbit in $\l(\CC^*)x$ on which $\mu^{\l(S^1)}_{(K,\omega)}$ takes the value $\delta_0$, and a diffeomorphism 
\[(\mu^{\l(S^1)}_{(K,\omega)})^{-1}(\omega_{\min}+\delta_0) \times (0,\delta_0) \to (\mu^{\l(S^1)}_{(K,\omega)})^{-1}(\omega_{\min}, \omega_{\min}+\delta_0),\] 
extending continuously to 
$(\mu^{\l(S^1)}_{(K,\omega)})^{-1}(\omega_{\min}+\delta_0) \times [0,\delta_0] \to (\mu^{\l(S^1)}_{(K,\omega)})^{-1}[\omega_{\min}, \omega_{\min}+\delta_0]$,
defined as $(x,\delta) \mapsto \l(\tau_{\delta_0}(x,\delta))x = \phi(x, \tau_{\delta_0}(x,\delta))$ where 
\[\tau_{\delta_0}: (\mu^{\l(S^1)}_{(K,\omega)})^{-1}(\omega_{\min}+\delta_0) \times (0,\delta_0) \to (0,1)\]
is such that $\tau_{\delta_0}(x,\delta)$ is the unique $t \in (0,1)$ satisfying 
\[\mu^{\l(S^1)}_{(K,\omega)}(\l(t)x)=\omega_{\min}+\delta. \] 
Thus $\tau_{\delta_0}(x,\delta) \in (0,1)$ is uniquely determined by the equivalent conditions $ \mu^{\l(S^1)}_{(K,\omega)}(\varphi(x,\tau_{\delta_0}(x,\delta)))=\omega_{\min}+\delta$ and  $\psi(x,\tau_{\delta_0}(x,\delta))^2=\delta$.
Since we chose $\psi$ to be orientation-preserving, we have $\psi(x,\tau_{\delta_0}(x,\delta))=\sqrt{\delta}\ge 0$ for all $x \in X_{\min}^0 \setminus Z_{\min}(X)$ and $\delta \in [0,\delta_0]$. 

It follows from Lemma \ref{lem:x0min}  that $b_y \neq 0$ if $y \in Z_{\min}(X)$ and $b\neq 0$, and then by Remark \ref{rem:constant} $b_y \notin T_yZ_{\min}(X)$. Hence by the compactness of $Z_{\min}(X)$ and of $\{b \in \lieu: ||b|| = 1\}$, 
for any sufficiently small $\delta_0>0$ 
we can find $E_0\ge \varepsilon_0>0$ independent of $b$ and $y$ such that 
$$
\varepsilon_0||b||\sqrt{\delta} \le d\mu^{\l(S^1)}_{(K,\omega)}(y)(b_y)\le E_0||b||\sqrt{\delta} \text{ holds if } y=\varphi(x,\tau_{\delta_0}(x,\delta)) \text{ and } \mu^{\l(S^1)}_{(K,\omega)}(x)=\omega_{\min}+\delta_0 \text{ for some } 0<\delta\leq \delta_0,
$$ or equivalently
\begin{equation}\label{epsilon0}
\varepsilon_0||b||\sqrt{\delta} \le d\mu^{\l(S^1)}_{(K,\omega)}(y)(b_y)\le E_0||b||\sqrt{\delta} \text{ holds if } 
\mu^{\l(S^1)}_{(K,\omega)}(y)=\omega_{\min}+\delta \text{ and } 0<\delta\leq \delta_0.
\end{equation} 
Furthermore since $\mu^{\l(S^1)}_{(K,\omega)}(y)$ is a Morse--Bott function, near its minimum $Z_{\min}(X)$ the ratio of $\mu^{\l(S^1)}_{(K,\omega)}(y) - \weight_{\min}$ to the normsquare of its gradient $\l_*(1)_y$ at $y$ is bounded above and below by strictly positive constants, so we can assume that $\varepsilon_1 {\delta} \le \eta_{\omega,y}(\l_*(1)_y,\l_*(1)_y) = ||\l_*(1)_y||_\eta^2  \le E_1 {\delta}$ for  $y \in (\mu^{\l(S^1)}_{(\omega,y)})^{-1}(\weight_{\min} + \delta)$ as required.

Finally if $a \in \mathrm{Lie} R$ then $[a,\l_*(1)]  = 0$ and so $\exp(\RR a)$ preserves $Z_{\min}(X)$. Thus $a_y \in T_y Z_{\min}(X)$ and $i a_y \in T_y Z_{\min}(X)$
 if $y \in Z_{\min}(X)$, while the closure in $X^0_{\min}$ of any downwards gradient flowline for  $\mu^{\l(S^1)}_{(K,\omega)}$ from a point in $X^0_{\min} \setminus Z_{\min}(X)$ meets $Z_{\min}(X)$ transversally. 
 Since $\mu^{\l(S^1)}_{(K,\omega)}$ is a Morse--Bott function, the space of these flowlines is homeomorphic to a sphere bundle over $Z_{\min}(X)$ and hence is compact. The unit sphere in $\mathrm{Lie} R$ is also compact, so
  it follows that there is some $E_2 \in [0,1)$ such that if $y \in Z_{\min}(X)$ is the limit of a downwards gradient flowline for  $\mu^{\l(S^1)}_{(K,\omega)}$ and  $a_y \neq 0$
 then the cosine of the angle between this flowline and  $ a_y$ lies in the interval $[-E_2,E_2]$. By continuity we can assume that this is also true for  $y \in (\mu^{\l(S^1)}_{(\omega,y)})^{-1}(\weight_{\min} + \delta)$
 if $0 < \delta < \delta_0$, provided that $\delta_0$ is chosen sufficiently small. Since the gradient of $\mu^{\l(S^1)}_{(K,\omega)}$ at $y$ is $ \l_*(1)_y$, the result follows.
\qed

 \begin{definition}
 For  $b \in \lieh = \mathrm{Lie}(H)$ define $f_b: X \to \RR$ by $f_b(x) = \mathrm{Im} \, \mu^{H}_{(K,\omega)}(x).b$.
 Note that  $f_b(x) = \mathrm{Im} \, \mu^{U}_{(K,\omega)}(x).b$ if $b \in \lieu = \mathrm{Lie}(U)$.
\end{definition}

\begin{lemma}\label{claimU}  
Suppose that semistability coincides with stability for the action of $\hat{U}$, in the sense of Definition 
 \ref{cond star}, and  that $(K,\omega) \in \Omega$. 
Given any sufficiently small $\delta_0 >0$, there exists $\varepsilon >0$ such that if $0<\delta \leq \delta_0$ then
\[ \mathrm{d}f_b(y)(b_y)                                                     
\ge \varepsilon ||b||^2\]
whenever $b \in \mathrm{Lie}U$ and $y \in (\mu^{\l(S^1)}_{(K,\omega)})^{-1}(\weight_{\min} + \delta)$.
\end{lemma}

\proof 
We can assume, for simplicity of exposition and without essential loss of generality, that $X=Y$; this is where we use the requirement in Definitions \ref{cond starK} and
\ref{cond starKH} that $Z_{\min}(X) \subseteq Z_{\min}(Y)$. 

By \eqref{one2} in Definition \ref{defn:omegaH} the derivative of $f_b$ at $y$ is given for $b \in \lieu$ and $\xi \in T_y X$ by $\mathrm{Re} \, i \mathrm{d}\mu_{(K,\omega)}^{U}(y)(\xi)$ where
\[  \mathrm{d}\mu_{(K,\omega)}^{U}(y)(\xi)\cdot b
=  ({\eta_{\omega,y}(\xi,b_y)} - \eta_{\omega,y}(\iota_K(b)_y, \xi))/2i.
\] 
Here $\eta_{\omega,y}$ is the value at $y$ of the Hermitian metric determined by the $K$-invariant K\"ahler form $\omega$, while $\iota_K$ is the $K$-invariant involution on $\mathfrak{g}$ with fixed point set $\mathfrak{k}$, and $x \mapsto b_x$ is the holomorphic vector field on $X$ determined by the infinitesimal action of $b$. 
Setting $\xi = b_y$ gives us  
\[  \mathrm{d}f_b(y)(b_y)                                
 =  (\eta_{\omega,y}(b_y,b_y) - \mathrm{Re} \, \eta_{\omega,y}(\iota_K(b)_y, b_y))/2.
\]
By Lemma \ref{lem:x0min} $b \mapsto b_y$ is injective on $\lieu$ for $y$ sufficiently close to $Z_{\min}(X)$, and so the sesquilinear form defined on $\lieu$ by $(a,b) \mapsto {\eta_{\omega,y}(a_y,b_y)}$ is positive definite. By continuity and compactness there is some $\varepsilon >0$ such that $\eta_{\omega,y}(b_y,b_y) \geq 4\varepsilon $ for all $b \in \lieu$ with $||b|| = 1$ and all $y$ in some compact neighbourhood of $Z_{\min}(X)$; equivalently 
\[\eta_{\omega,y}(b_y,b_y)/2 \geq 2\varepsilon ||b||^2\]
for all $b \in \lieu$  and all $y$ in this compact neighbourhood of $Z_{\min}(X)$.
On the other hand, by Remark \ref{rem:zminumoment}
  \begin{equation}\label{iota}
\eta_{\omega,y}(\iota_K(a)_y, b_y) = 0 \text{ when } y \in Z_{\min}(X), a \in \lieu
\end{equation} 
and by continuity and compactness again we have 
\begin{equation}\label{closetozmin}
|\eta_{\omega,y}(\iota_K(a)_y, b_y)|/2 \leq \varepsilon  ||a||\, ||b|| \text{ for all } a,b \in \lieu \text{ and $y$ sufficiently close to } Z_{\min}(X).
\end{equation}
Taking $a=b$, the result follows by the triangle inequality.
\qed

\begin{proposition} {[\textbf{Local slice theorem for $U$}]} \label{sliceU} Suppose that semistability coincides with stability for the action of $\hat{U}$, in the sense of Definition 
 \ref{cond star}, and  that $(K,\omega) \in \Omega$. 
If $\delta_0 > 0$ is small enough then
\begin{enumerate}[(i)]
\item  the intersections with $(\mu^{\l(S^1)}_{(K,\omega)})^{-1}(-\infty, \weight_{\min} + \delta_0)$ of $(\mu^{U}_{(K,\omega)})^{-1}(0)$ and $U Z_{\min}(X)$ 
 are closed submanifolds of the open neighbourhood $(\mu^{\l(S^1)}_{(K,\omega)})^{-1}(-\infty, \weight_{\min} + \delta_0)$ of $Z_{\min}(X)$ in $X$;
\item  
$U(\mu^{U}_{(K,\omega)})^{-1}(0)$ 
contains the open neighbourhood $(\mu^{\l(S^1)}_{(K,\omega)})^{-1}(-\infty, \weight_{\min} + \delta_0)$ of $Z_{\min}(X)$ in $X$;
\item 
if 
$x \in (\mu^{U}_{(K,\omega)})^{-1}(0) \cap (\mu^{\l(S^1)}_{(K,\omega)})^{-1}(-\infty,\omega_{\min}+\delta_0) \text { and } ux \in (\mu^{U}_{(K,\omega)})^{-1}(0) \cap (\mu^{\l(S^1)}_{(K,\omega)})^{-1}(-\infty,\omega_{\min}+\delta_0)$ for some $u \in U$ then $u=1$, and 
the  map  
$U \times (\mu^{U}_{(K,\omega)})^{-1}(0) \to U (\mu^{U}_{(K,\omega)})^{-1}(0)$
defined by $(u,x) \mapsto ux$ is a local diffeomorphism near $\{1\} \times Z_{\min}(X)$; 
\item 
$UZ_{\min}(X) \cap (\mu^{U}_{(K,\omega)})^{-1}(0)\cap (\mu^{\l(S^1)}_{(K,\omega)})^{-1}(-\infty, \omega_{\min}+\delta_0)=Z_{\min}(X).$
\end{enumerate}
\end{proposition}

\proof
As in the proof of Lemma \ref{claimU} we can assume that $X=Y$. 

For (i) and (ii), note that if $y \in Z_{\min}(X)$ then the tangent space of the orbit $T_y(Uy)$ is isomorphic to the Lie algebra $\lieu$ of $U$ and
\begin{equation}\label{isom}
d\mu^{U}_{(K,\omega)}(y)|_{T_y(Uy)}: T_y(Uy) \stackrel{\simeq}{\to} \lieu^*
\end{equation}
is an isomorphism defined by a positive definite pairing on $\lieu$ by Lemma \ref{claimU}. So if $\delta_0>0$ is sufficiently small then the same is true for
$y \in (\mu^{\l(S^1)}_{(K,\omega)})^{-1}(-\infty,\omega_{\min}+\delta_0)$. Therefore
$(\mu^{U}_{(K,\omega)})^{-1}(0) \cap (\mu^{\l(S^1)}_{(K,\omega)})^{-1}(-\infty,\omega_{\min}+\delta_0)$ is a closed submanifold of $(\mu^{\l(S^1)}_{(K,\omega)})^{-1}(-\infty,\omega_{\min}+\delta_0)$, with 
\[\{0\}=T_y(Uy) \cap T_y((\mu^{U}_{(K,\omega)})^{-1}(0)) \supseteq T_y(Uy) \cap T_yZ_{\min}(X)\]
by Lemma \ref{zminumoment}, and
the restriction $U \times (\mu^{U}_{(K,\omega)})^{-1}(0) \to X$ of the action of $U$ on $X$ has bijective derivative near $\{1\} \times Z_{\min}(X)$. Now (ii) follows since 
$(\mu^{U}_{(K,\omega)})^{-1}(0) \cap (\mu^{\l(S^1)}_{(K,\omega)})^{-1}(-\infty,\omega_{\min}+\delta_0)$ contains $Z_{\min}(X)$.

Since $\exp: \lieu \to U$ is surjective, for (iii) it remains to show that if 
\[x \in (\mu^{U}_{(K,\omega)})^{-1}(0) \cap (\mu^{\l(S^1)}_{(K,\omega)})^{-1}(-\infty,\omega_{\min}+\delta_0) \text { and } ux \in (\mu^{U}_{(K,\omega)})^{-1}(0) \cap (\mu^{\l(S^1)}_{(K,\omega)})^{-1}(-\infty,\omega_{\min}+\delta_0)\]
where 
 $u=\exp(sb)$ for some $b\in \lieu$ with $||b||=1$ and $s \in \RR$, then $s=0$.  By Lemmas \ref{claim} and \ref{claimU} the functions 
\[g(s)=\mu^{\l(S^1)}_{(K,\omega)}(\exp(sb)x)(b) \text{ and } f(s)=\mu^{U}_{(K,\omega)}(\exp(sb)x)(b)\]
satisfy the conditions of the second part of the gradient trick (Lemma \ref{gradient})  with 
\[[\b_{\min},\b]=[\omega_{\min},\omega_{\min}+\delta_0], \,\,\, Y=(\mu^{U}_{(K,\omega)})^{-1}(0), \]
and so $s=0$ by Lemma \ref{gradient} (ii). This proves (iii) and completes the proof of (i) by showing that  the intersection with $(\mu^{\l(S^1)}_{(K,\omega)})^{-1}(-\infty, \weight_{\min} + \delta_0)$ of  $U Z_{\min}(X)$ 
 is a closed submanifold of  $(\mu^{\l(S^1)}_{(K,\omega)})^{-1}(-\infty, \weight_{\min} + \delta_0)$.

Finally it follows from Lemma \ref{zminumoment} that $UZ_{\min}(X) \cap (\mu^{U}_{(K,\omega)})^{-1}(0)\cap (\mu^{\l(S^1)}_{(K,\omega)})^{-1}(-\infty, \omega_{\min}+\delta_0) \supseteq Z_{\min}(X)$. The converse inclusion also holds, since if $u\in U$, $x \in Z_{\min}(X)$ and $y=ux \in (\mu^{U}_{(K,\omega)})^{-1}(0) \cap (\mu^{\l(S^1)}_{(K,\omega)})^{-1}(-\infty,\omega_{\min}+\delta_0)$, then by (iii) $u=1$ and hence $y=x\in Z_{\min}(X)$. 
\qed

\begin{corollary} \label{cor2}
Suppose that semistability coincides with stability for the action of $\hat{U}$, in the sense of Definition 
 \ref{cond star}, and  that $(K,\omega) \in \Omega$. 
 Then the $\hU$-stabiliser $\mathrm{Stab}_{\hU}(x)$ of any $x$ in $(\mu^{U}_{(K,\omega)})^{-1}(0)\cap (\mu^{\l(S^1)}_{(K,\omega)})^{-1}(\omega_{\min}, \omega_{\min}+\delta_0)$ is finite. 
\end{corollary}
\proof
Since $\hU$ has finite stabilizers on $(\mu^{\l(S^1)}_{(K,\omega)})^{-1}(-\infty, \omega_{\min}+\delta_0)\setminus UZ_{\min}(X)$, this follows from Proposition \ref{sliceU} (iv)  and Lemma \ref{lem:x0min}. 
\qed

\begin{definition} For $x \in \RR$ we introduce the shorthand notation 
\[(\mu^{\hU}_{(K,\omega)})^{-1}(x)=(\mu^{U}_{(K,\omega)})^{-1}(0) \cap (\mu^{\l(S^1)}_{(K,\omega)})^{-1}(x);\]
equivalently the direct sum decomposition $\mathrm{Lie}\hU = \mathrm{Lie}U \oplus \mathrm{Lie}\l(\CC^*)$ allows us to identify $(\mathrm{Lie}\hU)^*$ with  $(\mathrm{Lie}U)^* \oplus (\mathrm{Lie}\l(\CC^*))^*$, and hence to identify $\RR$ with $(\mathrm{Lie}\l(S^1))^* \le (\mathrm{Lie}\l(\CC^*))^* \le (\mathrm{Lie}\hU)^*$.

Similarly given a direct sum decomposition $\mathrm{Lie}R = \mathrm{Lie}\bar{R} \oplus \mathrm{Lie}\l(\CC^*)$ for a subgroup $\bar{R}$ of $R$, we can identify $(\mathrm{Lie}H)^*$ with  $(\mathrm{Lie}U\rtimes \bar{R})^* \oplus (\mathrm{Lie}\l(\CC^*))^*$, and hence identify $\RR$ with $(\mathrm{Lie}\l(S^1))^* \le (\mathrm{Lie}\l(\CC^*))^* \le (\mathrm{Lie}H)^*$, and thus interpret
\[(\mu^{H}_{(K,\omega)})^{-1}(x)=(\mu^{U \rtimes \bar{R}}_{(K,\omega)})^{-1}(0) \cap (\mu^{\l(S^1)}_{(K,\omega)})^{-1}(x).\]
\end{definition}

\begin{proposition} {[\textbf{Slice theorem for $\hU$}]} \label{sliceUhat} 
Suppose that semistability coincides with stability for the action of $\hat{U}$ on $X$, in the sense of Definition 
 \ref{cond star}, and  that $(K,\omega) \in \Omega$. 
If $\delta_0 > 0$ is small enough then for $0<\delta <\delta_0$ we have 
\begin{enumerate}[(i)]
\item if $\hat{u} \in \hU$ and $x\in (\mu^{\hU}_{(K,\omega)})^{-1}(\omega_{\min}+\delta)$ then 
\[\hat{u}x \in (\mu^{\hU}_{(K,\omega)})^{-1}(\omega_{\min}+\delta) \text{ if and only if } \hat{u}\in \l(S^1);\]
\item $\omega_{\min}+\delta$ is a regular value of $\mu^{\hU}_{(K,\omega)}$,  and 
$\hU(\mu^{\hU}_{(K,\omega)})^{-1}(\omega_{\min}+\delta)$ coincides with the open subset $X^0_{\min} \setminus UZ_{\min}(X)$ of~$X$; 
\item $\hU(\mu^{\hU}_{(K,\omega)})^{-1}(\omega_{\min}+\delta)$ is diffeomorphic to the quotient of the product $\hU \times (\mu^{\hU}_{(K,\omega)})^{-1}(\omega_{\min}+\delta)$ by the diagonal action of $\l(S^1)$ acting by right multiplication on $\hU$ and by the given left action on $(\mu^{\hU}_{(K,\omega)})^{-1}(\omega_{\min}+\delta).$
\end{enumerate}
\end{proposition}

\proof 
As before let $\hU_{\RR} = U \rtimes \l(\RR^{>0})$ with Lie algebra $\hat{\lieu}_\RR$. Then $\hU = \l(S^1) \hU_{\RR}$ and $\l(S^1) \cap  \hU_{\RR} = \{1\}$. Also $(\mu^{\hU}_{(K,\omega)})^{-1}(\omega_{\min}+\delta)$ is invariant under the action of $\l(S^1)$. So for (i) it suffices to show that if $\hat{u} \in \hU_{\RR}$ and $x\in (\mu^{\hU}_{(K,\omega)})^{-1}(\omega_{\min}+\delta)$ and 
$\hat{u}x \in (\mu^{\hU}_{(K,\omega)})^{-1}(\omega_{\min}+\delta)$ then $\hat{u} = 1$. If $\hat{u} \in U$ then this follows from Proposition \ref{sliceU} so we can assume that $\hat{u} \in \hU \setminus U$.
Then by Lemma \ref{borel} $\hat{u}$ is in the image of the exponential map $\exp: \hat{\lieu}_\RR \to \hU_{\RR}$, so we can assume that $\hat{u} = \exp( sb+\l_*(t))$ for some $b\in \lieu$  with $||b||=1$ and $s,t \in \RR$. By exchanging the roles of $x$ and $\hat{u}x$ and replacing $\hat{u}$ with its inverse, if necessary, we can assume that $t \ge 0$. Then by replacing $s$ and $b$ with $-s$ and $-b$, if necessary, we can assume that $s\ge 0$.

Thus let $\hat{b} = sb + \l_*(t) \in \hat{\lieu}_{\RR}$  with $||b||=1$ and $s,t \in [0,\infty)$, and define $f_{\hat{b}}: X \to \RR$ by $f_{\hat{b}}(x) = \mathrm{Im} \, \mu^{\hU}_{(K,\omega)}(x).\hat{b}$. We want to show that there exists $\delta_0 > 0$, independent of the choice of $\hat{b}$, such that if $0<\delta \le \delta_0$ then
\[ \mathrm{d}f_{\hat{b}}(y)(\hat{b}_y)                                                     
>0\]
when $y \in (\mu^{\hU}_{(K,\omega)})^{-1}(\weight_{\min} + \delta)$ and $\hat{b} \neq 0$. Then (i) will follow from 
 the first part of the gradient trick (Lemma \ref{gradient}).       
So suppose $b\in \lieu$  with $||b||=1$ and $s,t \in [0,\infty)$. Since $i_K(\l_*(1))_y=-\l_*(1)_y$ we have that $\mathrm{d}f_{\hat{b}}(y)(\hat{b}_y) $ is given (up to a factor of 2) by
\begin{multline}\label{expanded}
 \mathrm{Re} (2i \, \mathrm{d}\mu^{\hU}_{(K,\omega)}(y)(sb_y+\l_*(t)_y)\cdot (sb_y+\l_*(t)_y))=||sb_y+t\l_*(1)_y||_\eta^2-\mathrm{Re} \, \eta_{\omega,y}(i_K(sb+t\l_*(1))_y,sb_y+t\l_*(1)_y)=\\
=s^2||b_y||^2_\eta+2t^2||\l_*(1)_y||_\eta^2+st\mathrm{Re}(3\eta_{\omega,y}(\l_*(1)_y,b_y)-\eta_{\omega,y}(i_K(b)_y,\l_*(1)_y))-s^2 \mathrm{Re} \, \eta_{\omega,y}(i_K(b)_y,b_y), 
\end{multline}
where for any $\xi \in T_yX$ we use $||\xi||_\eta$ as shorthand for the non-negative square root of $\eta_{\omega,y}(\xi,\xi)$.
By compactness, since $b_y \neq 0$ if $y\in Z_{\min}(X)$ we can find $0<\varepsilon_3\le E_3$ independent of the choice of $b$ with $||b||=1$ such that 
\begin{equation}\label{epsilon1} 
\varepsilon_3 \le ||b_y||_\eta \le E_3 \text{ when } \mu^{\l(S^1)}_{(K,\omega)}(y)=\omega_{\min}+\delta \text{ and } 0\le \delta \le \delta_0.
\end{equation}
By Lemma \ref{claim}, 
 since $||b||=1$, we can assume $\delta_0 >0$ is such that
\begin{equation}\label{epsilon0modified}
\varepsilon_0 \sqrt{\delta} \le \mathrm{Re} \, \eta_{\omega,y}(\l_*(1)_y,b_y) \le E_0 \sqrt{\delta} \text{ when } \mu^{\l(S^1)}_{(K,\omega)}(y)=\omega_{\min}+\delta \text{ and } 0 \le \delta \le \delta_0
\end{equation} and 
\begin{equation}\label{epsilon2}
\sqrt{\varepsilon_1} \sqrt{\delta} \le
 ||\l_*(1)_y||_\eta \le \sqrt{E_1} \sqrt{\delta} \text{ when } \mu^{\l(S^1)}_{(K,\omega)}(y)=\omega_{\min}+\delta \text{ and } 0\le \delta \le\delta_0.
\end{equation}
Finally, since $i_K(b)_y=0$ but $b_y \neq 0$ for $y \in Z_{\min}(X)$ (see Remark \ref{rem:zminumoment}), given any $\varepsilon>0$ we can find $\delta_0=\delta_0(\varepsilon)$ so that 
\begin{equation}\label{ikbepsilon}
||i_K(b)_y||_\eta < \varepsilon ||b_y|| 
\text{ when } \mu^{\l(S^1)}_{(K,\omega)}(y)=\omega_{\min}+\delta \text{ and } 0\le \delta \le \delta_0
\end{equation}
and hence by Cauchy-Schwarz 
\begin{equation}\label{s2term}
|\mathrm{Re} \, \eta_{\omega,y}(i_K(b)_y,b_y)| \le \varepsilon ||b_y||_\eta^2   
\text{ when } \mu^{\l(S^1)}_{(K,\omega)}(y)=\omega_{\min}+\delta \text{ and } 0\le \delta \le \delta_0
\end{equation}
and
\begin{equation}\label{stterm}
|\mathrm{Re} \, \eta_{\omega,y}(i_K(b)_y,\l_*(1)_y)| \le \varepsilon ||b_y||_\eta ||\l_*(1)_y||_\eta    
\text{ when } \mu^{\l(S^1)}_{(K,\omega)}(y)=\omega_{\min}+\delta \text{ and } 0\le \delta \le \delta_0. 
\end{equation}
We want to show that if $\mu^{\l(S^1)}_{(K,\omega)}(y)=\omega_{\min}+\delta \text{ and } 0 < \delta \le \delta_0$ then
$$s^2||b_y||^2_\eta+2t^2||\l_*(1)_y||_\eta^2+st\mathrm{Re}(3\eta_{\omega,y}(\l_*(1)_y,b_y)-\eta_{\omega,y}(i_K(b)_y,\l_*(1)_y))-s^2 \mathrm{Re} \, \eta_{\omega,y}(i_K(b)_y,b_y)$$ is strictly positive when $s,t \in [0,\infty)$ are not both zero.
It suffices to have 
$$ ||b_y||^2_\eta > \mathrm{Re} \, \eta_{\omega,y}(i_K(b)_y,b_y)$$
and 
$$\mathrm{Re}(3\eta_{\omega,y}(\l_*(1)_y,b_y)-\eta_{\omega,y}(i_K(b)_y,\l_*(1)_y))>0$$
when $\mu^{\l(S^1)}_{(K,\omega)}(y)=\omega_{\min}+\delta \text{ and } 0 < \delta \le \delta_0$. By the inequalities (\ref{epsilon1} - \ref{stterm}), we can satisfy this condition by taking $\varepsilon \in (0,1)$ such that
\[\varepsilon<\frac{3 \varepsilon_0}{E_3 \sqrt{E_1}}\]
and letting $\delta_0=\delta_0(\varepsilon)>0$ chosen as above. 
We have in fact shown that  there exist $\delta_0 > 0$ and $\a_0,\b_0,\gamma_0>0$, independent of the choice of $\hat{b} = sb + \l_*(t) \in \hat{\lieu}_{\RR}$  with $||b||=1$ and $s,t \in [0,\infty)$, such that if $0 \le \delta \leq \delta_0$ then 
\[ \mathrm{d}f_{\hat{b}}(y)(\hat{b}_y)                                                     
\ge   \a_0 s^2   +  \b_0 st \sqrt{\delta}  + \gamma_0 t^2 \delta  \]
whenever $y \in (\mu^{\hU}_{(K,\omega)})^{-1}(\weight_{\min} + \delta)$. This completes the proof of (i).

It now follows from Corollary \ref{cor2} that if $0<\delta  \le \delta_0$ and $y \in (\mu^{\hU}_{(K,\omega)})^{-1}(\omega_{\min}+\delta)$ then $T_y(\hU_\RR y) \simeq \hat{\lieu}_\RR$ and 
\[\mathrm{Im} \, \mathrm{d}\mu^{\hU}_{(K,\omega)}(y)|_{T_y(\hU_\RR y)}: T_y(\hU_\RR y) \to \hat{\lieu}_\RR^*\]
is an isomorphism. 
 This means (as in the proof of Proposition \ref{sliceU}) that $(\mu^{\hU}_{(K,\omega)})^{-1}(\omega_{\min}+\delta)$ is a closed submanifold of $X$, that  
$\hU(\mu^{\hU}_{(K,\omega)})^{-1}(\omega_{\min}+\delta)$ is an open subset of $X$ and that (iii) holds. Moreover 
for any $y \in (\mu^{\hU}_{(K,\omega)})^{-1}(\omega_{\min},\omega_{\min}+\delta_0)$ there is a unique $\xi_y \in T_y\hat{U}_{\RR}y$ (varying smoothly with $y$) such that 
$$\mathrm{d}\mu^{\hU}_{(K,\omega)}(y)(\xi_y).b = 0 \text{ for all } b \in \lieu \text{ and } \mathrm{d}\mu^{\l(S^1)}_{(K,\omega)}(y)(\xi_y) = 1.$$
Integrating the vector field $\xi$ provides diffeomorphisms $\Xi_{\delta,\delta'}: (\mu^{\hU}_{(K,\omega)})^{-1}(\omega_{\min}+\delta) \to (\mu^{\hU}_{(K,\omega)})^{-1}(\omega_{\min}+\delta')$ for $\delta,\delta' \in (0,\delta_0)$ such that $\Xi_{\delta,\delta'}(y) \in \hat{U}_{\RR}y$ for all $y \in (\mu^{\hU}_{(K,\omega)})^{-1}(\omega_{\min}+\delta)$.
Thus $\hat{U} (\mu^{\hU}_{(K,\omega)})^{-1}(\omega_{\min}+\delta)$ is independent of $\delta \in (0,\delta_0)$. 

By Proposition \ref{sliceU} (iii) there is some $\delta_1 \in (0,\delta_0)$ such that 
$$(\mu^{\l(S^1)}_{(K,\omega)})^{-1}(-\infty,\omega_{\min}+\delta_1)  \subseteq U((\mu^{U}_{(K,\omega)})^{-1}(0) \cap (\mu^{\l(S^1)}_{(K,\omega)})^{-1}(-\infty,\omega_{\min}+\delta_0)    ). $$
By this and the observations made in the previous paragraph, since $X^0_{\min} \subseteq \l(\CC^*) (\mu^{\l(S^1)}_{(K,\omega)})^{-1}(-\infty,\omega_{\min}+\delta_1)$, it follows that for any $\delta \in (0,\delta_0)$
$$ X^0_{\min} \subseteq \bigcup_{0 \le \delta' < \delta_0} \,\, \hat{U} (\mu^{\hU}_{(K,\omega)})^{-1}(\omega_{\min}+\delta') = UZ_{\min}(X)\,  \cup \, \hat{U} (\mu^{\hU}_{(K,\omega)})^{-1}(\omega_{\min}+\delta),$$
and so 
$$ X^0_{\min} \setminus UZ_{\min}(X) \subseteq \hU (\mu^{\hU}_{(K,\omega)})^{-1}(\omega_{\min} + \delta) .$$
Conversely 
since $X^0_{\min}$ contains $(\mu^{\hU}_{(K,\omega)})^{-1}(\omega_{\min}+\delta)$ and is $\hU$-invariant, it follows from (i) and Lemma \ref{lem:x0min} that 
$$\hU(\mu^{\hU}_{(K,\omega)})^{-1}(\omega_{\min}+\delta) \subseteq X^0_{\min} \setminus UZ_{\min}(X)$$
 and the proof is complete.
\qed

\medskip

A modification of the proof of the $\hU$-slice theorem gives us a slice theorem for $H$.

\begin{proposition} \label{sliceH} [\textbf{Slice theorem for $H$}]\label{sliceH} 
Suppose that the Hamiltonian action of $H = U \rtimes R$ on $X$  with respect to the $H$-equivariant K\"ahler structure $\Omega$ on $X$ (with $(K,\omega) \in \Omega$) satisfies (strong) stability=semistability in the sense of Definition \ref{weaks=ss}. Then there exists $\delta_0 > 0$ such that 
 for $0<\delta <\delta_0$ we have 
\begin{enumerate}[(i)]
\item if $h \in H$ and $x\in (\mu^{H}_{(K,\omega)})^{-1}(\omega_{\min}+\delta)$ then 
\[hx \in (\mu^{H}_{(K,\omega)})^{-1}(\omega_{\min}+\delta) \text{ if and only if } h \in Q;\]
\item $\omega_{\min}+\delta$ is a regular value of $\mu^{H}_{(K,\omega)}$, and $H(\mu^{H}_{(K,\omega)})^{-1}(\omega_{\min}+\delta)$ is an open subset of $X^0_{\min} \setminus UZ_{\min}(X)$; 
\item $H(\mu^{H}_{(K,\omega)})^{-1}(\omega_{\min}+\delta)$ is diffeomorphic to the quotient of the product $H \times (\mu^{H}_{(K,\omega)})^{-1}(\omega_{\min}+\delta)$ by the diagonal action of $Q$ acting by right multiplication on $H$ and by the given left action on $(\mu^{H}_{(K,\omega)})^{-1}(\omega_{\min}+\delta).$
\end{enumerate}
\end{proposition}

\proof 
Recall that $R = Q_{\CC} = Q \exp(i\lieq)$ where $\lieq$ is the Lie algebra of $Q$ and $\l(\RR^{>0}) = R \cap \exp(i \lieq)$ and $\l(S^1)  = Q \cap \l(\CC^*)$, and the multiplication map $Q \times \exp(i\lieq) \to R$ is a diffeomorphism. Note that $\exp(i\lieq)$ is not in general a subgroup of $R$.

Let  $\lieh_\RR = \lieu \oplus i \lieq \subseteq \lieh$ and $H_{\RR} =  U \exp(i \lieq ) \subseteq H$. Then $H = Q H_{\RR}$ and $Q \cap  H_{\RR} = \{1\}$. Also $(\mu^{H}_{(K,\omega)})^{-1}(\omega_{\min}+\delta)$ is invariant under the action of $Q$. So for (i) it suffices to show that if $h \in H_{\RR}$ and $x\in (\mu^{H}_{(K,\omega)})^{-1}(\omega_{\min}+\delta)$ and 
$hx \in (\mu^{H}_{(K,\omega)})^{-1}(\omega_{\min}+\delta)$ then $h = 1$. If $h \in U$ then $h=1$ by Proposition \ref{sliceU}, so we can assume that $h \in H_{\RR} \setminus U$.
Moreover since every element of $\lieq$ lies in the Lie algebra of a maximal torus of $Q$, and $\l(S^1)$ is contained in every maximal torus of $Q$, it is enough to prove (i) when $Q$ is a two-dimensional compact torus $\l(S^1) \times \l'(S^1)$ so $H \cong U \rtimes (\CC^*)^2$.
By a slight abuse of notation, we can allow $\l'$ to be a rational 1-parameter subgroup of $H$ represented by an element of $\liek$ of norm 1.

Then by a modification of the proof of Lemma \ref{borel}, $h$ lies in the image of the exponential map $\exp: \lieh_\RR \to H_{\RR}$, so we can assume that $h = \exp( sb+\l_*(t) + \l'_*(t'))$ for some $b\in \lieu$ and $s,t,t' \in \RR$ where $b$ and $\l'$ have norm 1. By exchanging the roles of $x$ and $\hat{u}x$ and replacing $\hat{u}$ with its inverse, if necessary, we can assume that $t \ge 0$. Then by replacing $s$ and $b$ with $-s$ and $-b$, if necessary, we can assume that $s\ge 0$.

Thus let $\hat{b} = sb + \l_*(t) + \l'_*(t') \in {\lieh}_{\RR}$  with $||b||=1$ and $s,t \in [0,\infty)$ and $t' \in \RR$, and define $f_{\hat{b}}: X \to \RR$ by $f_{\hat{b}}(x) = \mathrm{Im} \, \mu^{H}_{(K,\omega)}(x).\hat{b}$. We want to show that there exists $\delta_0 > 0$, independent of the choice of $\hat{b}$, such that if $0<\delta \le \delta_0$ then
\[ \mathrm{d}f_{\hat{b}}(y)(\hat{b}_y)                                                     
>0\]
when $y \in (\mu^{H}_{(K,\omega)})^{-1}(\weight_{\min} + \delta)$ and $\hat{b} \neq 0$. Then (i) will follow from 
 the first part of the gradient trick (Lemma \ref{gradient}).       

So suppose $b\in \lieu$  with $||b||=1$ and $s,t \in [0,\infty)$ and $t' \in \RR$. Since $i_K(\l_*(1))_y=-\l_*(1)_y$ and $i_K(\l'_*(1))_y=-\l'_*(1)_y$
we have that $\mathrm{d}f_{\hat{b}}(y)(\hat{b}_y) $ is given (up to a factor of 2) by
\begin{multline} 
 \mathrm{Re} (2i \, \mathrm{d}\mu^{\hU}_{(K,\omega)}(y)(sb_y+\l_*(t)_y + \l'_*(t')_y)\cdot (sb_y+\l_*(t)_y+ \l'_*(t')_y))\\= ||sb_y+t\l_*(1)_y + t' \l'_*(1)_y||_\eta^2-\mathrm{Re} \, \eta_{\omega,y}(i_K(sb+t\l_*(1) + \l'_*(t'))_y,sb_y+t\l_*(1)_y +t' \l'_*(1)_y)\\
=s^2||b_y||^2_\eta+2||t \l_*(1)_y +t' \l'_*(1)_y||_\eta^2+s\mathrm{Re}(3\eta_{\omega,y}(t\l_*(1)_y +t' \l'_*(1)_y,b_y)-\eta_{\omega,y}(i_K(b)_y,t\l_*(1)_y +t' \l'_*(1)_y))-s^2 \mathrm{Re} \, \eta_{\omega,y}(i_K(b)_y,b_y).
\end{multline}
In the proof of Proposition \ref{sliceUhat} it was  shown that, for $\delta_0 > 0$ sufficiently small, there exist $\a_0,\b_0,\gamma_0>0$, independent of the choice of $sb + \l_*(t) \in \hat{\lieu}_{\RR}$  with $||b||=1$ and $s,t \in [0,\infty)$, such that if $0 \le \delta \leq \delta_0$ then 
\[ s^2||b_y||^2_\eta+s\mathrm{Re}(3\eta_{\omega,y}(t\l_*(1)_y,b_y)-\eta_{\omega,y}(i_K(b)_y,t\l_*(1)_y))-s^2 \mathrm{Re} \, \eta_{\omega,y}(i_K(b)_y,b_y)
\ge   \a_0 s^2   +  \b_0 st \sqrt{\delta}  \]
and $2||t \l_*(1)_y||_\eta^2 \ge   \gamma_0 t^2 \delta $
whenever $y \in (\mu^{\hU}_{(K,\omega)})^{-1}(\weight_{\min} + \delta)$. 
By Lemma \ref{claim} we can also assume that 
$$|\eta_{(K,\omega)}(\l_*(1)_y,\l'_*(1)_y)|  \le E_2 || \l_*(1)_y||_\eta  ||\l'_*(1)_y||_\eta $$
where $E_2 \in [0,1)$ is independent of the choice of $sb + \l_*(t) \in \hat{\lieu}_{\RR}$  with $||b||=1$ and $s,t \in [0,\infty)$.
Moreover the strong version of semistability=stability tells us that $ \l'_*(1)_y \neq 0$ when $y \in (\mu^{H}_{(K,\omega)})^{-1}(\omega_{\min})$, so if $\delta_0>0$ is sufficiently small then  $|| \l'_*(1)_y ||_\eta$ is bounded away from zero on $(\mu^{H}_{(K,\omega)})^{-1}(-\infty,\omega_{\min}+\delta_0)$, say 
\[|| \l'_*(1)_y ||_\eta>\zeta > 0 \text{ when } y\in (\mu^{H}_{(K,\omega)})^{-1}(-\infty,\omega_{\min}+\delta_0).\] 

Since $\l(S^1)$ grades $U$, we can write $b = [\l_*(1),b']$ for some $b' \in \lieu$. As $\eta_{\omega,y}$ is $\l(S^1)$-invariant and $\l(S^1)$  commutes with $\l'(\CC^*)$, it follows that  when $y \in Z_{\min}(X)$
we have  
$$\eta_{\omega,y}( \l'_*(1)_y,b_y) =    \eta_{\omega,y}( \l'_*(1)_y,[\l_*(1)_y,b'_y])   = \eta_{\omega,y}( [\l'_*(1)_y, \l_*(1)_y],b'_y)        = 0.$$
Thus, by compactness again, given $\epsilon>0$ we can choose $\delta_0>0$ small enough so that 
$$ |  \mathrm{Re} \, \eta_{\omega,y} (\l'_*(1)_y,b_y) | \le \epsilon $$
when $y \in (\mu^{H}_{(K,\omega)})^{-1}(-\infty,\omega_{\min} + \delta_0)$. 
Since $i_K(b)_y = 0$ when $y \in Z_{\min}(X)$ we can also assume that $$ |  \mathrm{Re} \, \eta_{\omega,y} (i_K(b)_y,\l'_*(1)_y) | \le \epsilon $$
when $y \in (\mu^{H}_{(K,\omega)})^{-1}(-\infty,\omega_{\min} + \delta_0)$. 
Therefore
\begin{multline} \mathrm{d}f_{\hat{b}}(y)(\hat{b}_y) \ge  \a_0 s^2   +  \b_0 st \sqrt{\delta} + 2||t \l_*(1)_y +t' \l'_*(1)_y||_\eta^2+st'\mathrm{Re}(3\eta_{\omega,y}( \l'_*(1)_y,b_y)-\eta_{\omega,y}(i_K(b)_y, \l'_*(1)_y))\\
\ge  \a_0 s^2   +  \b_0 st \sqrt{\delta} + 2t^2|| \l_*(1)_y||_\eta^2  + 2(t')^2|| \l'_*(1)_y||_\eta^2 - 4tt' E_2 || \l_*(1)_y||_\eta|| \l'_*(1)_y||_\eta
-4st'\epsilon \\ 
\ge  \left( \sqrt{\a_0} s  -  \frac{2\epsilon}{\sqrt{\a}_0} t' \right) ^2   +  \b_0 st \sqrt{\delta} + 2 \left(t || \l_*(1)_y||_\eta  - {E_2} t'|| \l'_*(1)_y||_\eta \right)^2 + 2(t')^2 \left(\zeta^2(1 - E_2^2) - \frac{4\epsilon^2}{\a_0} \right). 
\end{multline} 
Here $\a_0, \b_0$ and $\zeta$ are strictly positive, and so is $\zeta^2(1 - E_2^2)$ since $E_2 \in [0,1)$. Moreover we can choose $\epsilon>0$ as small as we wish, so we can assume that $\zeta^2(1 - E_2^2) > 4\epsilon^2 /\a_0$. Also $s$ and $t$ are non-negative and hence 
 if $\delta_0>0$ is chosen sufficiently small and 
$y \in (\mu^{H}_{(K,\omega)})^{-1}(\weight_{\min} + \delta)$ for $\delta \in (0,\delta_0)$, then $ \mathrm{d}f_{\hat{b}}(y)(\hat{b}_y)                                                     
\ge 0$, and if $ \mathrm{d}f_{\hat{b}}(y)(\hat{b}_y)                                                     
= 0$, 
 then $s=t=t'=0$
so
$\hat{b} = 0$. 
(i) now follows from the gradient trick, and then (ii) and (iii) follow as in the proof of Proposition \ref{sliceUhat}.
\qed

\subsection{Proof of Theorem \ref{thm:mainA}}\label{Hactions}
Suppose now as in Theorem \ref{thm:mainA} that a nonsingular connected projective variety $X$ has a well adapted linear action of a linear algebraic group $H=U\rtimes R$ with internally graded unipotent radical $U$, satisfying H-stability=H-semistability in the strong sense (see Definitions \ref{def:s=ss} and \ref{cond starKH} and Remark \ref{remark:ss=s}). 
As in Theorem \ref{mainthmextended}, we will assume that $U Z_{\min}(X)$ is not dense in $X$; otherwise $Z_{\min}(X)$ plays the role of the GIT quotients $X \env U$ and $X\env \hU$ via the $\hU$-invariant morphism $p: X^0_{\min} \to Z_{\min}(X)$,  and $Z_{\min}(X)\env (R/\l(\CC^*))$ plays the role of $X \env H$, so we are reduced to the familiar reductive situation.

Let $\Omega$ be an $H$-equivariant K\"ahler structure  on $X$
constructed using Fubini--Study K\"ahler forms 
 as in 
Example \ref{remarkFS}, 
let $(K,\omega) \in \Omega$ and let $\newmmxh: \Omega \times X \to {\lieh}^*$ be an {$\Omega$-moment map} for the $H$-action on $X$ in the sense of Definition \ref{defn:omegaH}. Let $\mu^{H}_{(K,\omega)}: X \to {\lieh}^*$ be the   restriction 
 of $\newmmxh:\Omega \times X \to {\lieh}^*$ to $X$ identified with $\{(K,\omega)\} \times X$. 
 The well-adaptedness of the linearisation means that if $\delta_0 >0$ is chosen as in the slice theorems of the previous section (Propositions \ref{sliceU}, \ref{sliceUhat}, \ref{sliceH}) then we can assume that $$\weight_{\min} < 0 < \weight_{\min} + \delta_0.$$
We will also assume that $(\mu^{H}_{(K,\omega)})^{-1}(0)$ is nonempty.

 We have the following version of diagram \eqref{uhatmomentmapH}:
 \begin{equation}\label{uhatmomentmapH2}
\xymatrix{\Omega \times X \ar[r]^-{\newmmxg} \ar[rd]^{\newmmxh}  &  \lieg^*=\liek^* \oplus i\liek^* \ar[d]^{p_1^*} & \\
& \mathfrak{h}^* 
=\hat{\lieu}^* \oplus \bar{\mathfrak{r}}^* 
 \ar[d]^{p_2^*} \ar[rd]^{\bar{p}_3^*} & \\
& \hat{\lieu}^* & \bar{\mathfrak{r}}^*= \bar{\mathfrak{q}}^* \oplus i \bar{\mathfrak{q}}^*}
\end{equation}
where $\bar{\mathfrak{r}}$ is the Lie algebra of $\bar{R} = R/\l(\CC^*)$ identified with a sub-Lie algebra of the Lie algebra $\mathfrak{r}$ of $R = Q_\CC$, and $\bar{\mathfrak{q}}$ is the Lie algebra of $Q/\l(S^1)$ identified with a sub-Lie algebra of the Lie algebra $\mathfrak{q}$ of the maximal compact subgroup $Q=K\cap H$ of $R$.
Let $\mu^{\bar{R}}_{(K,\omega)}: X \to {\bar{\mathfrak{r}}}^*$ be the composition of $\mu^{H}_{(K,\omega)}: X \to {\lieh}^*$ with the restriction map from ${\lieh}^*$ to $ {\bar{\mathfrak{r}}}^*$.
Then
\[(\mu^{H}_{(K,\omega)})^{-1}(0)=(\mu^{\hU}_{(K,\omega)})^{-1}(0) \cap (\mu^{{\bar{R}}}_{(K,\omega)})^{-1}(0)\]
where both sides are $Q$-invariant, and hence 
\begin{equation}\label{momentmaps}
(\mu^{H}_{(K,\omega)})^{-1}(0)/Q=(((\mu^{\hU}_{(K,\omega)})^{-1}(0) \cap (\mu^{{\bar{R}}}_{(K,\omega)})^{-1}(0)
)/\l(S^1))/(Q/\l(S^1)).
\end{equation}
 We now have the following result  which, in combination with Proposition \ref{sliceH}, completes the proof of Theorem \ref{thm:mainA}.
 
\begin{theorem}\label{quotientshomeoH} Under the assumptions above,  $H (\mu_{(K,\omega)}^{H})^{-1}(0)$ coincides with the $H$-stable locus $X^{s,H}_{\min+}$, and  the inclusion of $(\mu_{(K,\omega)}^{H})^{-1}(0)$
    in $X^{s,H}_{\min+}$ induces a diffeomorphism of orbifolds  from $(\mu_{(K,\omega)}^{H})^{-1}(0)/Q$ 
     to the GIT quotient $X \env H$.
\end{theorem}
\proof
Recall from Theorem \ref{mainthmextended} that 
the projective variety $X\env H$ is a geometric quotient of $X^{s,{{H}}}_{\min+}$ 
 of $X$ by the action of $H$, the stabiliser  $\Stab_H(x)$ is finite for all $x \in X^{s,{{H}}}_{\min+}$
and 
$$ X^{nss,H} = X^{ss,fg,H} = X^{\rms,H} = X^{s,H}_{\min+} =X^{ss,H}_{\min+} = p^{-1}(Z_{\min}(X)^{s,\bar{R}}) \setminus UZ_{\min}(X),$$
where $p:X^0_{\min} \to Z_{\min}(X)$ is defined as in Definition \ref{def:p} and $Z_{\min}(X)^{s,\bar{R}}$ is defined as 
 in Definition \ref{def:s=ss}. Here $p^{-1}(Z_{\min}(X)^{s,\bar{R}}) $ and $ UZ_{\min}(X)$ are $H$-invariant open subsets of $X$. By the relationship between reductive GIT and symplectic reduction (see \eqref{reductivemmap}) we have
 $$ Z_{\min}(X) \cap (\mu^{{\bar{R}}}_{(K,\omega)})^{-1}(0) \subseteq Z_{\min}(X)^{ss,\bar{R}} = Z_{\min}(X)^{s,\bar{R}} \subseteq p^{-1}(Z_{\min}(X)^{s,\bar{R}}),$$
 and hence 
 when $\delta_0 >0$ is chosen sufficiently small satisfying the conditions of Proposition \ref{sliceH} and $\weight_{\min} < 0 < \weight_{\min} + \delta_0$, then $(\mu_{(K,\omega)}^{H})^{-1}(0)$ is contained in $p^{-1}(Z_{\min}(X)^{s,\bar{R}})$. Moreover $(\mu^{H}_{(K,\omega)})^{-1}(0) \subseteq (\mu^{\hU}_{(K,\omega)})^{-1}(0)$, which is disjoint from $UZ_{\min}(X)$ by Proposition \ref{sliceUhat}. 
 Hence $(\mu_{(K,\omega)}^{H})^{-1}(0)$ is contained in $p^{-1}(Z_{\min}(X)^{s,\bar{R}})  \setminus UZ_{\min}(X),$
  which coincides  
 with the $H$-invariant open subset $X^{s,H}_{\min+}$  
  of $X$. So we have an inclusion of open subsets
 $$H (\mu_{(K,\omega)}^{H})^{-1}(0) \subseteq X^{s,H}_{\min+}$$
 of $X$, inducing an open inclusion $H (\mu_{(K,\omega)}^{H})^{-1}(0)/H \subseteq X^{s,H}_{\min+}/H$ where $X^{s,H}_{\min+}/H = X \env H$ is a projective variety.
 By Proposition \ref{sliceH} (iii)   the inclusion of 
$(\mu_{(K,\omega)}^{H})^{-1}(0)$ in $W_H = H (\mu_{(K,\omega)}^{H})^{-1}(0)$ induces a diffeomorphism of orbifolds $(\mu_{(K,\omega)}^{H})^{-1}(0)/Q 
     \to W_H/H$, so $W_H/H$ is compact. The complement of $X^{s,H}_{\min+}$ in $X$ has real codimension at least two and so $X_{\min+}^{s,H}$ and its quotient $X^{s,H}_{\min+}/H$ are connected, and  $W_H/H$ is nonempty since by assumption $( \mu_{(K,\omega)}^{H})^{-1}(0) \neq \emptyset$. So we must have $W_H/H = X^{s,H}_{\min+}/H = X \env H$ and hence $H (\mu_{(K,\omega)}^{H})^{-1}(0) = X^{s,H}_{\min+}$, and the result follows. \qed

\section{Morse inequalities and Betti numbers}\label{sec:betti}

We recall some basic facts on equivariantly perfect  stratifications, following \cite{francesthesis}. For a topological space $Y$ endowed with a $G$-action let
\[P_t(Y)=\sum_{i\ge 0} t^i \dim H^i(Y,\QQ) \text{ and } P_t^G(Y)=\sum_{i\ge 0} t^i \dim H^i_G(Y,\QQ)\]
denote its rational Poincari\'e series and equivariant Poincar\'e series.  
\subsection{Morse inequalities}
Given a smooth stratification $X=\cup_{\beta \in B}S_\b$ of the manifold $X$ in the sense of \cite{francesthesis}, we can build up the cohomology of $X$ inductively from the cohomology of the strata. There is a Thom-Gysin sequence  relating the cohomology of the stratum $S_\beta$ and of the open subsets $\cup_{\g < \b}S_\g$ and $\cup_{\g \le \b} S_\g$ of $X$. These give us the Morse inequalities which can be expressed as follows. Assume that each component of any stratum $S_\b$ has the same codimension, say $d(\beta)$, in $X$. Then 
\[\sum_{\beta \in B}t^{d(\beta)}P_t(S_\b)-P_t(X)=(1+t)R(t)\]
where $R(t)$ is a series with non-negative integer coefficients. 
The stratification is called \emph{perfect} if the Morse inequalities are equalities, that is, if 
\[P_t(X)=\sum_{\beta \in B}t^{d(\beta)}P_t(S_\b).\]

If the space $X$ is acted on by a topological group $G$ then the equivariant cohomology $H_G^*(X,\QQ)$ is define as the ordinary cohomology of the topological quotient $EG \times_G X$ where $EG$ is a contractible space equipped with a free $G$-action (i.e. the total space of the universal $G$-bundle $EG \to BG$):
\[H_G^*(X;\QQ)=H^*(EG \times_G X;\QQ).\]
For any smooth $G$-invariant  stratification $X=\cup_{\beta \in B}S_\b$ we obtain equivariant Morse inequalities 
\[\sum_{\beta \in B}t^{d(\beta)}P^G_t(S_\b)-P^G_t(X)=(1+t)R_G(t)\]
where again $R_G(t)$ has non-negative coefficients. 
The stratification is called \emph{equivariantly perfect} if these are equalities; that is, $R_G(t)=0$. 

In \cite{francesthesis} it is shown that  the normsquare of a moment map $f=||\mu||^2$ for a Hamiltonian action of a compact group $K$ on a compact symplectic manifold $X$ is equivariantly perfect. 
$K$ is homotopy equivalent to its complexification $G=K_\CC$ so $G$-equivariant cohomology is the same as $K$-equivariant cohomology for any $G$-space. In the GIT set-up the equivariant perfection of the normsquare of $\mu$ expresses the equivariant Betti numbers of the semistable locus $X^{ss}$ inductively in terms of the equivariant Betti numbers of the unstable strata and those of $X$ itself. When $X^s=X^{ss}$ holds, the action of $G$ on $X^{ss}$ is rationally free (finite stabilisers) and therefore 
\[H_K^*(X^{ss};\QQ)=H^*(X^{ss}/G;\QQ)=H^*(X/\!/G;\QQ)\]
holds which provides formulas for the Betti numbers of the GIT quotient. 

We will use the following criterion due to Atiyah and Bott for a stratification to be equivariantly perfect. 
\begin{lemma}[\cite{atiyahbott} 1.4, \cite{francesthesis} 2.18]\label{equivariantperfect} 
Let $X=\cup_{\beta\in B}S_\b$ be a smooth $G$-invariant stratification of $X$ such that for each $\b \in B$ the equivariant Euler class of the normal bundle to $S_\b$ in $X$ is not a zero divisor in $H^*_G(S_\b,\QQ)$. Then the stratification is equivariantly perfect over $\QQ$. 

\end{lemma}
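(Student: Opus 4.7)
The plan is to proceed by induction on the stratification, using the equivariant Thom-Gysin long exact sequence associated with each stratum, and to use the non-zero-divisor hypothesis to show that this long exact sequence collapses into short exact sequences of Poincar\'e series.

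First I would fix a total ordering refining the natural partial order on the indexing set $B$ (given by $\gamma \leq \beta$ iff $S_\gamma \subseteq \overline{S_\beta}$) such that each union $U_\beta = \bigcup_{\gamma \leq \beta} S_\gamma$ is open in $X$ and contains $S_\beta$ as a closed $G$-invariant submanifold. For each $\beta$ one then has a $G$-equivariant tubular neighbourhood of $S_\beta$ in $U_\beta$ which is $G$-equivariantly diffeomorphic to the total space of the normal bundle $N_\beta \to S_\beta$. Applying the equivariant Thom isomorphism $H^{i-d(\beta)}_G(S_\beta;\QQ) \cong H^i_G(N_\beta, N_\beta \setminus S_\beta;\QQ)$ together with excision, the long exact sequence of the pair $(U_\beta, U_\beta \setminus S_\beta)$ gives the equivariant Thom-Gysin sequence
\begin{equation*}
\cdots \to H^{i-d(\beta)}_G(S_\beta;\QQ) \xrightarrow{\,\cup e_\beta\,} H^i_G(U_\beta;\QQ) \to H^i_G(U_\beta \setminus S_\beta;\QQ) \to H^{i-d(\beta)+1}_G(S_\beta;\QQ) \to \cdots
\end{equation*}
where the first map factors through pullback to $N_\beta$ followed by cup product with the equivariant Euler (Thom) class $e_\beta$ of the normal bundle, which under the Thom isomorphism corresponds to multiplication by $e_\beta \in H^{d(\beta)}_G(S_\beta;\QQ)$.

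Next, the hypothesis that $e_\beta$ is not a zero divisor in $H^*_G(S_\beta;\QQ)$ forces the map $\cup e_\beta$ to be injective in every degree, so the Thom-Gysin sequence splits into short exact sequences
\begin{equation*}
0 \to H^{i-d(\beta)}_G(S_\beta;\QQ) \xrightarrow{\,\cup e_\beta\,} H^i_G(U_\beta;\QQ) \to H^i_G(U_\beta \setminus S_\beta;\QQ) \to 0
\end{equation*}
for every $i$. Taking Poincar\'e series yields the additivity identity $P_t^G(U_\beta) = t^{d(\beta)} P_t^G(S_\beta) + P_t^G(U_\beta \setminus S_\beta)$. Feeding this inductively through the well-ordering of strata (noting that $U_\beta \setminus S_\beta = \bigcup_{\gamma < \beta} S_\gamma$ by construction) telescopes to the desired equality $P_t^G(X) = \sum_{\beta \in B} t^{d(\beta)} P_t^G(S_\beta)$, so the remainder term $R_G(t)$ in the equivariant Morse inequalities vanishes and the stratification is equivariantly perfect.

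The main obstacle, and the point that needs most care, is a mild technical one rather than a conceptual one: justifying the existence of the equivariant Thom-Gysin sequence when $B$ is not finite (so that the telescoping induction must be argued as a direct limit over finite initial segments of $B$) and, in situations where the strata are only locally closed real submanifolds, checking that a genuine $G$-equivariant tubular neighbourhood of each $S_\beta$ in $U_\beta$ exists. Once this setup is in place, the argument reduces entirely to the algebraic fact that cup product with a non-zero-divisor is injective, which is where the hypothesis on $e_\beta$ does all the work.
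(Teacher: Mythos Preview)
Your argument is correct and is precisely the standard Atiyah--Bott proof from the cited references. The paper itself does not give a proof of this lemma; it simply quotes it as a known result from \cite{atiyahbott} and \cite{francesthesis}, so there is nothing to compare against beyond noting that your write-up reproduces the original argument faithfully.

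One small point of phrasing worth tightening: the injectivity of the Thom--Gysin map $H^{i-d(\beta)}_G(S_\beta) \to H^i_G(U_\beta)$ is deduced not directly from the description of that map, but from the fact that its composite with the restriction $H^i_G(U_\beta) \to H^i_G(S_\beta)$ is cup product with $e_\beta$; since the composite is injective by the non-zero-divisor hypothesis, so is the first map. Your sentence conflates the Thom--Gysin map with this composite, which is harmless here but could be made more precise.
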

   
\subsection{Betti numbers for $\hU$ actions} Let $X$ be a smooth projective variety endowed with a well-adapted action of $\hU$ for which semistability coincides with stability in the sense of Definition \ref{cond star}. We aim to prove that the simple stratification 
\[X_{\min}^{0}=X^{s,\hU} \cup \hU Z_{\min}(X)\]
is $\hU$-equivariantly perfect. We start with some technical lemmas. Recall that $p:X^0_{\min} \to Z_{\min}(X)$ sends $x \in X^0_{\min}$ to $\lim_{t\to 0} \l(t)x$.

\begin{lemma} \label{lemmaB} Suppose that $\hU$-semistability coincides with $\hU$-stability in the sense of Definition \ref{cond star} for the action of $\hU$ on $X$. Then 
\begin{enumerate}[(i)]
\item the $U$-sweep $U  Z_{\min}(X)=\hat{U} Z_{\min}(X)$ is a closed subvariety of $X_{\min}^0$; 
\item the equivariant Euler class of the normal bundle to $UZ_{\min}(X)$ in $X_{\min}^0$ is not a zero divisor in $H_{S^1}^*(UZ_{\min}(X))$. 
\end{enumerate}
\end{lemma}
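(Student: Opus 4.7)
For part (1), my plan is to combine Lemma \ref{lemmaA} with the non-reductive GIT description of the stable locus. The equality $UZ_{\min} = \hat{U}Z_{\min}$ is immediate because $\lambda(\GG_m)$ fixes $Z_{\min}$ pointwise, so $\hat{U} Z_{\min} = U \lambda(\GG_m) Z_{\min} = U Z_{\min}$. Similarly $UZ_{\min} \subseteq X_{\min}^0$ follows from Lemma \ref{lemmaA}, since $p(uz) = z \in Z_{\min}$. For closedness I would invoke Definition \ref{def min stable}, which identifies $X^{s,\hat{U}}_{\min+} = X_{\min}^0 \setminus UZ_{\min}$; since this stable locus is open in $X$ (and hence in $X_{\min}^0$) by non-reductive GIT (Theorem \ref{mainthm}), the subset $UZ_{\min}$ is closed in $X_{\min}^0$.

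For part (2), the first step is to produce a concrete global model of $UZ_{\min}$ and compute its $S^1$-equivariant cohomology. Consider the multiplication map $\phi: U \times Z_{\min} \to UZ_{\min}$, $(u,z) \mapsto uz$. Condition (\ref{star}) combined with Lemma \ref{lemmaA} gives injectivity, and its differential at $(e,z)$ sends $(\xi, \eta) \in \lieu \oplus T_zZ_{\min}$ to $\xi \cdot z + \eta \in T_zX$; this is injective because $\lieu \cdot z$ lies in the strictly positive $\lambda$-weight part of $T_zX$ (and $\lieu \to \lieu \cdot z$ is itself injective by (\ref{star})) while $T_zZ_{\min}$ is the zero-weight part. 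By $\hat{U}$-equivariance of $\phi$ the differential is injective everywhere, so $\phi$ is a bijective morphism of smooth varieties of the same dimension with everywhere injective differential, hence an isomorphism onto its (smooth) image. Transporting the $S^1 = \lambda(S^1)$ action via $\phi^{-1}$ gives $s \cdot (u,z) = (sus^{-1}, z)$. Because $\lambda$ grades $\lieu$ with strictly positive weights, the homotopy $H_\tau(u,z) = (\lambda(\tau) u \lambda(\tau)^{-1}, z)$ for $\tau \in (0,1]$ extends continuously by $H_0(u,z) = (e, z)$, and since $\lambda(\GG_m)$ is abelian this homotopy is $S^1$-equivariant. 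Thus $UZ_{\min}$ deformation-retracts $S^1$-equivariantly onto the section $Z_{\min}$, so pullback along $i: Z_{\min} \hookrightarrow UZ_{\min}$ induces an isomorphism
\[
H_{S^1}^*(UZ_{\min}) \;\cong\; H_{S^1}^*(Z_{\min}) \;=\; H^*(Z_{\min}; \QQ) \otimes \QQ[t].
\]

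Finally, to analyse the normal bundle $N = N_{UZ_{\min}/X_{\min}^0}$ I would use the short exact sequence
\[
0 \;\to\; N_{Z_{\min}/UZ_{\min}} \;\to\; N_{Z_{\min}/X_{\min}^0} \;\to\; i^*N \;\to\; 0
\]
associated with the chain $Z_{\min} \subset UZ_{\min} \subset X_{\min}^0$. The trivialization above identifies $N_{Z_{\min}/UZ_{\min}}$ with the trivial bundle $Z_{\min} \times \lieu$, carrying the adjoint $S^1$-action (all weights strictly positive); the Bialynicki-Birula decomposition identifies $N_{Z_{\min}/X_{\min}^0}$ with the strictly positive $\lambda$-weight part of $TX|_{Z_{\min}}$. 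Hence $i^*N$ is a vector bundle on $Z_{\min}$ carrying an $S^1$-action with all weights strictly positive. Decomposing into $S^1$-weight summands, its equivariant Euler class has the form
\[
e_{S^1}(i^*N) \;=\; c\, t^r + (\text{lower powers of } t) \;\in\; H^*(Z_{\min}; \QQ)[t],
\]
where $r$ is the rank of $i^*N$ and $c$ equals the product of the positive weights, a nonzero constant on every connected component of $Z_{\min}$. Because the leading coefficient is invertible, $e_{S^1}(i^*N)$ is not a zero divisor in $H^*(Z_{\min}; \QQ)[t]$; transporting through the isomorphism $H_{S^1}^*(UZ_{\min}) \cong H_{S^1}^*(Z_{\min})$ completes the argument. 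The main technical obstacle is verifying that $\phi$ is a variety isomorphism under (\ref{star}) and cleanly identifying the quotient $i^*N$ so that the strict positivity of its $S^1$-weights is transparent; both ultimately rest on $\lambda$ grading $\lieu$ with strictly positive weights combined with the triviality of stabilisers on $Z_{\min}$.
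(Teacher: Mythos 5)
Your part (2) strategy is the same as the paper's (retract $UZ_{\min}$ equivariantly onto $Z_{\min}$, then observe that all $S^1$-weights on the normal directions along the fixed locus $Z_{\min}$ are nonzero, so the leading $t$-coefficient of the equivariant Euler class is invertible), and your leading-coefficient argument is in fact more carefully spelled out than the paper's. But there is a genuine gap in the step you lean on to set this up: the claim that $\phi\colon U\times Z_{\min}\to UZ_{\min}$ is an isomorphism onto its image \emph{because} it is bijective with everywhere injective differential is not a valid implication. A bijective unramified morphism need not be an isomorphism onto its image: take the normalisation of a nodal cubic and delete one of the two preimages of the node; the resulting map is bijective with injective differential everywhere, yet the image is singular and the map is not an isomorphism (the inverse is not even continuous). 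To make your $\phi$ an isomorphism you need an extra input such as properness of $\phi$ onto its image, or Zariski's main theorem together with normality of the image, or a direct coordinate/slice argument as in Lemma \ref{lemmaA}. The gap is repairable without $\phi$ at all: the retraction you want is intrinsically the downward $\GG_m$-flow, $H_\tau(x)=\l(\tau)\cdot x$ for $\tau\in(0,1]$ extended by $H_0=p$, which is continuous on the $\GG_m$-invariant set $UZ_{\min}$, commutes with $\l(S^1)$, and retracts onto $Z_{\min}$; and the positivity (indeed nonvanishing) of the $S^1$-weights on $i^*N$ follows from $T_z(UZ_{\min})\supseteq \lieu\cdot z+T_zZ_{\min}$ together with the fact that $Z_{\min}$ is a union of fixed components, without needing the product decomposition. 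This intrinsic route is essentially what the paper does, phrased via the affine-cone structure of $p|_{UZ_{\min}}$ over $Z_{\min}$.

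On part (1), your route differs from the paper's: you quote Theorem \ref{mainthm} (from \cite{BDHK2}) for the openness of $X^{s,\hU}_{\min+}$ and combine it with Definition \ref{def min stable} identifying this locus with $X^0_{\min}\setminus UZ_{\min}$. Since that theorem is imported as a black box this is formally admissible, but be aware that it outsources exactly the geometric content the lemma records (closedness of $UZ_{\min}$ in $X^0_{\min}$ is equivalent to that openness), so it is close to assuming the point. The paper instead argues directly: by Lemma \ref{lemmaA} the fibre of $p|_{UZ_{\min}}$ over $z\in Z_{\min}$ is $U\cong\A^{\dim U}$ with strictly positive $\GG_m$-weights, so $(UZ_{\min}\setminus Z_{\min})/\GG_m\to Z_{\min}$ is a weighted projective (hence proper) bundle, which embeds as a closed subvariety of $(X^0_{\min}\setminus Z_{\min})/\GG_m$, and closedness of $UZ_{\min}$ in $X^0_{\min}$ follows by passing to affine cones. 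That self-contained argument has the added benefit of producing the cone/fibration structure that the paper then reuses in part (2), precisely where your proposal instead needed the problematic isomorphism $\phi$.
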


\proof (i) follows from Proposition \ref{sliceU}, but we give another argument here which we also use in proving the second statement. It is well known (going back to Bialynicki-Birula \cite{BB} and beyond) that the restriction to $X_{\min}^0 \setminus  Z_{\min}(X)$ of the $\GG_m$-invariant morphism $p:  X_{\min}^0 \to  Z_{\min}(X)$ factors through a projective morphism from a geometric quotient $(X_{\min}^0 \setminus  Z_{\min}(X))/\GG_m$ to $ Z_{\min}(X)$, and the fibres of $p$ can be identified with the affine cone associated to the fibre of the morphism from this geometric quotient to $Z_{\min}(X)$; when $X$ is nonsingular then the fibres of $p$ are affine spaces and the fibres of the geometric quotient $(X_{\min}^0 \setminus  Z_{\min}(X))/\GG_m$ over $Z_{\min}(X)$ are weighted projective spaces. 

Lemma \ref{lemmaA} tells us that the fibre over $x\in  Z_{\min}(X)$ of the restriction of $p$ to $UZ_{\min}(X)$ is isomorphic to $U/\stab_U(x)=U$. This is an affine space of dimension $\dim U$ on which $\GG_m$ acts with strictly positive weights, so the induced morphism
$$ (U  Z_{\min}(X) \setminus  Z_{\min}(X))/\GG_m  \to  Z_{\min}(X)
$$
has fibres which are  weighted projective spaces. In particular it follows that the embedding $(U  Z_{\min}(X) \setminus  Z_{\min}(X))/\GG_m  \to ( X_{\min}^0 \setminus  Z_{\min}(X))/\GG_m$ is closed, and hence so is the corresponding embedding of affine cones $U  Z_{\min}(X) \to  X_{\min}^0$. 

(ii) We want to show that the equivariant Euler class of the normal bundle $N$ to $UZ_{\min}(X)$ in $X_{\min}^0$ is not a zero-divisor in $H_{S^1}^*(UZ_{\min}(X),\QQ)$. We saw in the proof of (i) that $UZ_{\min}(X)$ is an affine cone over $Z_{\min}(X)$ and therefore the latter is a deformation retract of the former, giving us the isomorphism 
\[H_{S^1}^*(UZ_{\min}(X),\QQ) \simeq H_{S^1}^*(Z_{\min}(X),\QQ).\] 
Under this isomorphism the equivariant Euler class of $N$ is identified with the equivariant Euler class of its restriction to $Z_{\min}(X)$. However, $Z_{\min}(X)$ is a union of fixed points components of $S^1$ and therefore the $S^1$-weights to the normal directions are all nonzero, proving that the $S^1$-equivariant Euler class, which is the product of these normal weights, is nonzero. 
\qed

Lemma \ref{equivariantperfect} and Lemma \ref{lemmaB} give us  

\begin{corollary}
The stratification $X_{\min}^0=X^{s,\hU} \cup UZ_{\min}(X)$ is equivariantly perfect over $\QQ$.
\end{corollary}
We are ready to prove 

\begin{theorem}\label{thm:main}  Let $X$ be a smooth projective variety endowed with a well-adapted action of $\hU$ such that semistability coincides with stability in the sense of Definition \ref{cond star} for the action of $\hU$ on $X$. Then 
\[P_t(X/\!/\hU)=P_t(Z_{\min}(X))\frac{1-t^{2d}}{1-t^2}.\]
where $d=\dim(X)-\dim(U)-\dim(Z_{\min}(X))$. 
\end{theorem}

\proof We apply the Morse equalities for the $S^1$-equivariant Morse-stratification $X_{\min}^0=X^{s,\hU} \cup UZ_{\min}(X)$, where the codimension of the closed stratum $UZ_{\min}(X)$ in $X_{\min}^0$ is $d=\dim(X)-\dim(U)-\dim(Z_{\min}(X))$. Both strata retract equivariantly onto $Z_{\min}(X)$, and hence we get
\[P_t^{S^1}(X^{s,\hU})=P_t^{S^1}(Z_{\min}(X))(1-t^{2d}).\]
Finally we use that $P_t^{S^1}(Y)=P_t(Y)P_t(BS^1)=P_t(Y)\cdot \frac{1}{1-t^2}$ holds for any $Y$ with a trivial $S^1$ action to get the final formula. 
\qed

\subsection{Stratifications and Betti numbers for $H$ actions} 
Suppose that we have a well-adapted linear action of $H=U \rtimes R$ with internally graded unipotent radical $U$ and grading one-parameter subgroup $\l:\CC^* \to Z(R)$ on a nonsingular connected projective variety $X$ satisfying the strong version of $H$-stability=$H$-semistability in the sense of Definition \ref{weaks=ss}. Let $\bar{R}=R/\l(\CC^*)$ as in \S \ref{Hactions}. The GIT quotient of $X$ by this linear action of $H$ is given by an iterated quotient 
\[X\env H=(X\env \hU)/\!/\bar{R},\] 
and hence one way to compute the Betti numbers of $X/\!/H$ is to apply the machinery of \cite{francesthesis} to $X/\!/\hU$ acted on by the reductive group $\bar{R}=R/\l(\CC^*)$. Another method is to generalise the simple Morse stratification of $X/\!/\hU$ described in the previous section to the $H$-action. Let $Z_{\min}(X)^{s,\bar{R}}$ and $Z_{\min}(X)^{ss,\bar{R}}$
denote the stable locus and the semistable locus for the residual $\bar{R}$ action on $Z_{\min}(X)$; here we assume that the linearisation of the $R$-action has been twisted by a character so that the action of $\l(\CC^*)$ on the line bundle restricted to $Z_{\min}(X)$ is trivial. 

\begin{lemma}\label{lemma:fibrationH} Assume that the strong version of $H$-stability=$H$-semistability holds in the sense of Definition \ref{def:s=ss} for the well-adapted $H$ action on $X$, so that $Z_{\min}(X)^{s,\bar{R}}=Z_{\min}(X)^{ss,\bar{R}}$. Then
\begin{enumerate}[(i)]
\item $p^{-1}(Z_{\min}(X)^{s,\bar{R}})\setminus UZ_{\min}(X)^{s,\bar{R}}$ is an open $H$-invariant subset of $X^{s,H}$; 
\item $p: p^{-1}(Z_{\min}(X)^{s,\bar{R}}) \to Z_{\min}(X)^{s,\bar{R}}$ is $R$-equivariant and $U$-invariant, and the induced map 
\[\bar{p}: (p^{-1}(Z_{\min}(X)^{s,\bar{R}})\setminus UZ_{\min}(X)^{s,\bar{R}}(\CC^*))/H \to Z_{\min}(X)/\!/\bar{R}\] is a weighted projective bundle; 
\item $X/\!/H=(p^{-1}(Z_{\min}(X)^{s,\bar{R}})\setminus UZ_{\min}(X)^{s,\bar{R}})/H$;
\item the $R$-equivariant Euler class of the normal bundle to $UZ^{s,\bar{R}}_{\min}$ in $p^{-1}(Z_{\min}(X)^{s,\bar{R}})$ is not a zero divisor in $H_{R}^*(UZ^{s,\bar{R}}_{\min})$. 
\end{enumerate}
\end{lemma}
\proof This follows from the proofs of Theorem \ref{mainthmextended} and Lemma \ref{lemmaB}. 
\qed

\begin{theorem}\label{thm:mainH}
Let $X$ be a smooth projective variety endowed with a well-adapted action of $H=U \rtimes R$ such that $H$-stability=$H$-semistability holds in the strong sense of Definition \ref{def:s=ss}, and $Z_{\min}(X)^{s,\bar{R}}=Z_{\min}(X)^{ss,\bar{R}}$. Then 
\[P_t(X\env H)=P_t(Z_{\min}(X)/\!/\bar{R})\frac{1-t^{2d}}{1-t^2}\]
where $d=\dim(X)-\dim(U)-\dim(Z_{\min}(X))$. 
\end{theorem}

\proof 
By Theorem \ref{mainthmextended} 
 ${X}^{{s},H}=p^{-1}(Z_{\min}(X)^{s,\bar{R}})\setminus UZ_{\min}(X)^{s,\bar{R}}$. We have $R$-equivariant Morse equalities for the stratification $p^{-1}(Z_{\min}(X)^{s,\bar{R}})={X}^{{s},H} \cup UZ^{s,\bar{R}}_{\min}$, where the codimension of the closed stratum $UZ^{s,\bar{R}}_{\min}$ in $p^{-1}(Z_{\min}(X)^{s,\bar{R}})$ is $d=\dim(X)-\dim(U)-\dim(Z_{\min}(X))$. Both strata retract onto $Z^{s,\bar{R}}_{\min}$,  and so
\[P_t^{R}({X}^{{s},H})=P_t^{R}(Z^{s,\bar{R}}_{\min})(1-t^{2d}).\]
Note that the classifying space of $R$ decomposes as $BR=B\l(\CC^*) \times B(\bar{R})$, and thus $P_t^R(Y)=P_t^{\bar{R}}(Y)P(BS^1)=P_t^{\bar{R}}(Y)\frac{1}{1-t^2}$ holds for any $Y$ acted on by $\bar{R}$. Therefore 
\[P_t^{R}({X}^{{s},H})=P_t^{\bar{R}}(Z^{s,\bar{R}}_{\min})\frac{1-t^{2d}}{1-t^2}=P_t(Z_{\min}(X)/\!/(\bar{R}))\frac{1-t^{2d}}{1-t^2}.\]
On the other hand, by Theorem \ref{mainthmextended} , 
$X\env H$ is a geometric quotient of $X^{s,H}$ by $H$ acting with only finite stabilisers, and $H = U \rtimes R$ is homotopy equivalent to $R$, so 
\[P_t^{R}({X}^{{s},H})=P_t({X}^{s,H}/H)=P_t(X\env H).\]
\qed

\begin{remark} \label{surjectivity}
The proof of this theorem shows that the restriction map on equivariant cohomology
$$ H_R^*( X,\QQ) \to H^*_R({X}^{s,H},\QQ) \cong H^*(X\env H,\QQ)$$
is surjective.
\end{remark}

\section{Cohomology and intersection pairings on non-reductive quotients}\label{sec:martin}

This section gives a non-reductive version of  results of Martin \cite{SM}.

\subsection{Reductive abelianisation} We start with a brief summary of Martin's results \cite{SM}.
Suppose that $X$ is a compact symplectic manifold acted on by a compact Lie group $K$ with maximal torus $T$ in a Hamiltonian fashion, with moment map $\mu_K: X \to \liek^* = \mathrm{Lie}(K)^*$ for the action of $K$ and induced moment map $\mu_T: X \to \liet^*$ for the action of $T$. In the algebraic (or more generally K\"ahler) situation 
 the symplectic quotient $\mu_K^{-1}(0)/K$ can be identified with  the GIT (or K\"ahler) quotient $X/\!/G$ of $X$ by $G=K_\CC$ \cite{francesthesis}. Let $T$ be a maximal torus of $K$ and $T_\CC \subset G=K \otimes \CC$ the corresponding complex maximal torus in $G$. 

\begin{definition} Let $\a \in \liet^*$ be a weight of $T$, and let $\CC_\a$  denote the corresponding $1$-dimensional representation of $T$. Let $L_\a \to X/\!/T_\CC$ denote the associated bundle
\[L_\a:=\mu_T^{-1}(0)\times_T \CC_\a \to X/\!/T_{\CC},\]
whose Euler class is denoted by $e(\a) \in H^2(X/\!/T_\CC)\simeq H_T^2(X)$.  
The set of roots of $G$, that is, the set of nonzero weights of the adjoint action on $\lieg$, is denoted by $\Delta$. We fix a choice $\Delta^+ \subset \Delta$ of the set of positive roots, and denote by $\Delta^-$ the corresponding  set of negative roots.
\end{definition}
The following diagram of Martin \cite{SM} relates $X/\!/G$ and $X/\!/T_{\CC}$ through a fibration and an inclusion: 
\begin{equation}\label{diagrammartin}
\xymatrix{\mu_K^{-1}(0)/T \ar@{^{(}->}[r]^-{i} \ar[d]^{\pi} & \mu_T^{-1}(0)/T=X/\!/T_{\CC} \\
X/\!/G=\mu_K^{-1}(0)/K & } 
\end{equation}

Note that  $X/\!/G$ and $X/\!/T_{\CC}$ are symplectic manifolds, and hence possess compatible almost complex structures, unique up to homotopy. The following proposition is the main technical result of \cite{SM}.
\begin{proposition} 
 Suppose that 0 is a regular value of the moment map $\mu_K: X \to \mathrm{Lie}(K)^*$.
\begin{enumerate}[(i)]
\item The vector bundle $\oplus_{\a \in \Delta^-}L_\a \to X/\!/T_\CC$ has a section $s$, which
is transverse to the zero section, and whose zero set is the submanifold
$\mu_K^{-1}(0)/T \subset X/\!/T_{\CC}$. Therefore the normal bundle is
\[\mathcal{N}(\mu_K^{-1}(0)/T \subset X/\!/T_{\CC})\simeq \oplus_{\a \in \Delta^-}L_{\a}|_{\mu_K^{-1}(0)/T} \]
\item Let $\mathrm{vert}(\pi) \to \mu_K^{-1}(0)/T$ denote the relative tangent bundle for $\pi$. Then 
\[\mathrm{vert}(\pi)\simeq \oplus_{\a \in \Delta^+} L_\a |_{\mu_K^{-1}(0)/T}.\]
\item There is a complex orientation of $\mu_K^{-1}(0)/T$ such that the above isomorphisms are isomorphisms of complex-oriented spaces and vector bundles, with respect to the complex orientations of $X/\!/G$ and $X/\!/T_{\CC}$ induced by their symplectic forms.
\end{enumerate}
\end{proposition}
\begin{remark}
Recall that 0 is a regular value of $\mu_K$ if and only if the stabiliser in $K$ of every $x \in \mu_K^{-1}(0)$ is finite, and in the algebraic situation this happens if and only if semistability coincides with stability for the action of $G=K_\CC$. If 0 is a regular value of $\mu_K$ it does not follow that 0 is a regular value of $\mu_T$, but it does follow that the derivative of $\mu_T$ is surjective in a neighbourhood of $\mu_K^{-1}(0)$, and so the normal bundle to 
$\mu_K^{-1}(0)/T$ in  $X/\!/T_{\CC}$ is well defined, at least as an orbi-bundle.
\end{remark}

This proposition leads to the following results relating the topology of $X/\!/G$ and $X/\!/T_{\CC}$. 

\begin{theorem}[\textbf{Cohomology rings} \cite{SM} Theorem A] 
There is a natural ring isomorphism
\[H^*(X/\!/G,\QQ)\simeq \frac{H^*(X/\!/T,\QQ)^W}{ann(e)}.\]
Here $W$ denotes the Weyl group of $G$ which acts naturally on $X/\!/T$; the class $e \in H^*(X/\!/T)^W$ is the product of all roots $e=\prod_{\a \in \Delta} e(\a)$ and $\mathrm{ann}(e) \subset H^*(X/\!/T,\QQ)^W$ is the annihilator ideal 
\[\mathrm{ann}(e)=\{c \in H^*(X/\!/T,\QQ)^W| c \cup e=0\}.\]
\end{theorem}
Diagram \eqref{diagrammartin} provides a natural concept of a lift of a cohomology class on $X/\!/G$ to a class on $X/\!/T$, 
compatible with the above isomorphism: we say that $\tilde{a}\in H^*(X/\!/T)$ is a lift of $a\in H^*(X/\!/G)$ if $\pi^*a=i^*\tilde{a}$.
\begin{theorem}[\textbf{Integration formula}, \cite{SM} Theorem B]  Suppose that $K$ acts freely on $\mu_K^{-1}(0)$. Given a cohomology class $a \in H^*(X/\!/G)$ with lift $\tilde{a} \in H^*(X/\!/T)$ , then 
\[\int_{X/\!/G}a =\frac{n_0}{|W|}\int_{X/\!/T}\tilde{a} \cup e,\]
where $|W|$ is the order of the Weyl group of $G$, and $e$ is the cohomology class defined in Theorem A. Here $n_0>0$ depends on the generic sizes of stabilisers in $G$ and in $T$ of points of $X$, and $n_0=1$ if $K$ acts freely on $\mu_K^{-1}(0)$. 
\end{theorem}

\subsection{A nonabelian localisation formula}

Suppose as before that $X$ is a compact symplectic manifold acted on by a compact Lie group $K$  in a Hamiltonian fashion, with moment map $\mu_K: X \to \mathrm{Lie}(K)^*$. 
Recall that if the action of $K$ on the zero level set $\mu_K^{-1}(0)$ is free (or has finite stabilisers) there is a surjective map 
\begin{equation} \label{kmap} \kappa: H_K^*(X;\QQ) \to H^*(\mu_K^{-1}(0)/K;\QQ)\end{equation}
 from the equivariant cohomology of $X$ to the ordinary cohomology of $\mu^{-1}_K(0)/K$ given by composing the restriction map on rational equivariant cohomology to $\mu_K^{-1}(0)$ with an isomorphism between the rational equivariant cohomology of the latter and the ordinary rational cohomology of $\mu^{-1}_K(0)/K$ \cite{francesthesis}. In \cite{jeffreykirwan}  a formula is given  for the evaluation on the fundamental class of $\mu^{-1}_K(0)/K$ of $\kappa(\eta)$ for $\eta \in H_K^*(X;\QQ) $ whose degree is the dimension of $\mu^{-1}_K(0)/K$, in terms of the fixed point locus of a maximal torus $T$ of $K$ on $X$.

\begin{theorem}[\textbf{Localisation on reductive GIT quotients}, \cite{DuisHeckman}, \cite{jeffreykirwan},\cite{guilleminkalkman},\cite{vergne}]
\label{jeffreykirwan} Suppose that $0$ is a regular value of $\mu_K$.
Given any equivariant cohomology class $\eta$ on $X$ represented by an equivariant differential form
$\eta(z)$ whose degree is the dimension of $\mu^{-1}_K(0)/K$, we have 
\[\int_{\mu^{-1}_K(0)/K} \kappa(\eta) =n_K\mathrm{JKRes}^\Lambda \left(e^2(z)\sum_{F \subset \mathcal{F}} \int_F \frac{i_F^* (\eta(z))}{\mathrm{Euler}^T(\mathcal{N}_F)(z)}[dz]\right)\]
In this formula $z$ is a variable in $\liet_\CC$ so that a $T$-equivariant cohomology class can be evaluated at $z$, and $e(z)=\prod_{\gamma \in \Delta^+}\gamma(z)$ is the product of the positive roots. $\mathrm{JKRes}^\Lambda$ is a residue map which depends on a choice of a cone $\Lambda \subset \liet$, $\mathcal{F}$ is the set of components of the fixed point set of the maximal torus $T$ on $X$. If $ F\in \mathcal{F}$, $i_F$ is the inclusion of $F$ in $X$ and $\mathrm{Euler}^T(\mathcal{N}_F)$ is the $T$-equivariant Euler class of the normal bundle to $F$ in $X$. Finally, $n_K$ is a rational constant depending on $K$ and the size of the stabiliser in $K$ of a generic $x \in X$. 
\end{theorem}
The precise definition of this  residue map, whose domain is a suitable class of meromorphic differential forms on $\mathrm{Lie}(T)\otimes \CC$), is given in Definition 8.5 of \cite{jeffreykirwan}. Here we give the description of Jeffrey and Kogan \cite{jeffreykogan}. 

Let $V$ be an $r$-dimensional real vector space and let $A = \{\a_1,\ldots ,\a_n\}$ be a collection of (not necessarily distinct) non-zero vectors in $V^*$. We consider $\a_i$s as linear functions on $V$ . Let $\Lambda$ be a connected component of $V\setminus \bigcup_{i=1}^n \a_i^\perp$, where $\a_i^\perp=\{v \in V| \a_i(v)=0\}$. By this choice we have for all $i$ that either $\a_i \in \Lambda^v$ or $-\a_i \in \Lambda^v$, where $\Lambda^{v}=\{\beta \in V^* |\b(v)>0, \forall v\in \Lambda\}$ is the dual cone of $\Lambda$. Let $\xi \in \Lambda$ and choose a basis $\{z_1,\ldots ,z_r\}$ of $V^*$ such that $z_1(\xi)=1$ and $z_2(\xi)=\ldots =z_r(\xi)=0$. Let $\varepsilon=\varepsilon_1z_1+\ldots+\varepsilon_rz_r \in V^*$ and let $P\in \RR[V]$ be a polynomial.
\begin{definition}{\textbf{J-K residue} \cite{jeffreykirwan,jeffreykogan,szilagyi,szenes}}  We define   \label{defn:jeffreykirwan} 
\[\mathrm{Res}_{z_1}^+=\frac{P(\bz)e^{\varepsilon(\bz)}}{\prod_{i=1}^n\a_i(\bz)}dz_1=\begin{cases} \mathrm{Res}_{z_1=\infty} \frac{P(\bz)e^{\varepsilon(\bz)}}{\prod_{i=1}^n\a_i(\bz)}dz_1 & \text{if } \varepsilon_1\ge 0 \\ 0 & \text{if } \varepsilon<0\end{cases}\]
where $z_2,\ldots, z_r$ are taken to be constants when we take the residue with respect to $z_1$. The residue map is then defined as
\[\mathrm{JKRes}^\Lambda \frac{P(\bz)e^{\varepsilon(\bz)}}{\prod_{i=1}^n\a_i(\bz)}d\bz=\frac{1}{\det[(z_i,z_j)]_{i,j=1}^r}\mathrm{Res}_{z_r}^+\left(\ldots \left(\mathrm{Res}_{z_1}^+\frac{P(\bz)e^{\varepsilon(\bz)}}{\prod_{i=1}^n\a_i(\bz)}dz_1\right)\ldots \right)dz_r\]
where $\det[(x_i,x_j)]_{i,j=1}^r$ is the Gram determinant with respect to a fixed scalar product on $V^*$.
\end{definition}

Thus, after fixing coordinates and an inner product on $V=\liet$, and a cone $\Lambda \subset \liet$, the J-K residue in Theorem \ref{jeffreykirwan} can be interpreted as an iterated residue over those components of the torus-fixed point locus whose images under the torus moment map lie in $\Lambda$.

We will not need the definition in full generality in the following examples, just the simple case when $G=\GL(k,\CC)$ with maximal compact $K=U(k)$ acting on $X=\Hom(\CC^k,\CC^n)$ (or slightly  more generally on $X=\Hom(\CC^k,TX)$). 

\begin{example}[Localisation on $\grass_k(\CC^n)$, see also \S7 of \cite{SM}]
The Grassmannian $\grass_k(\CC^n)$ can be described as the symplectic quotient of the set of complex matrices $\Hom(\CC^k,\CC^n)$ with $n$ rows and $k$ columns by the unitary group $U(k)$ where $g\in U(k)$ acts on a matrix $A \in \Hom(\CC^k,\CC^n)$ by $A \circ g^{-1}$. The moment map $\Hom(\CC^k,\CC^n) \to u(k)$ is given by 
\[\mu(A)=\bar{A}^TA-\mathrm{Id}_{n\times n}\]
if we identify the dual of the Lie algebra $u(k)$ with Hermitian matrices via the pairing $\langle H,\xi \rangle=\frac{i}{2}\mathrm{trace}(H\bar{\xi}^T )$. 

The $k$ column vectors of the matrix $A\in \Hom(\CC^k, \CC^n)$ define vectors $v_1,\ldots, v_k\in \CC^n$, and the $(i, j)$-entry of $\bar{A}^TA$ is the Hermitian inner product $(v_j,v_i)$. Hence $\mu^{-1}(0)$ consists
of the unitary $k$-frames in $\CC^n$, and taking the quotient by $U(k)$ gives the Grassmannian:
\[\grass_k(\CC^n)=\mu^{-1}(0)/U(k).\]
Let $u_1,\ldots, u_k$ denote the weights of the maximal torus $T \subset U(k)$ acting on $\CC^k$. The Weyl group of $U(k)$ is the symmetric group on $k$ elements $S_k$, the roots of $U(k)$ can be enumerated by pairs of positive integers $(i, j)$ with $1\le i, j\le k$ and $i \neq j$, and the cohomology class corresponding to the root $(i, j)$ is $z_j-z_i$. Let $V$ denote the tautological rank $k$ vector bundle over $\grass_k(\CC^n)$. There is only one fixed point, namely the origin, so Theorem \ref{jeffreykirwan} gives us (see also Proposition 7.2 in \cite{SM})
\[\int_{\grass_k(\CC^n)} \phi(V)=\frac{1}{k!}\mathrm{Res}_{\bz=0} \frac{\phi(\bz) \prod_{i \neq j} (z_i-z_j) d\bz}{z_1^n\ldots z_k^n}.\]
\end{example}

\begin{example}[Localisation on $\flag_k(\CC^n)$]\label{example:flag}
We expect a similar residue formula for Chern numbers of the tautological bundle over the complex flag variety $\flag_k(\CC^n)=\Hom^\reg(\CC^k,\CC^n)/B_k$ where $B_k$ is the upper Borel in $\GL(k,\CC)$. The symplectic quotient description of $\flag_k(\CC^n)$ is a bit more delicate though, and our starting point is Proposition 1.2 of \cite{SM} and the corresponding diagram \eqref{diagrammartin}, which connects the different quotients $\mu_T^{-1}(0)/T, \mu_K^{-1}(0)/T$ and $\mu_K^{-1}(0)/K$. 

Take again $X=\Hom(\CC^k, \CC^n)$, $K=U(k), T=(S^1)^k$ as above, then $\mu_K^{-1}(0)$ consists of the unitary $k$-frames in $\CC^n$ and a $k$-tuple $(v_1,\ldots , v_k)\in \Hom(\CC^k, \CC^n)$ lies in $\mu^{-1}_T(0)$ precisely when each $v_i$ has length $1$. Therefore 
\[\mu_K^{-1}(0)/K=\grass_k(\CC^n), \mu_K^{-1}(0)/T=\flag_k(\CC^n), \mu_T^{-1}(0)/T=(\CC\PP^{n-1})^k\]
and according to Proposition 1.2 of \cite{SM} the equivariant Euler class of $\mu_K^{-1}(0)/T$ in $\mu_T^{-1}(0)/T$ is the product of the negative roots $\prod_{1\le i<j \le k}(z_i-z_j)$ of $U(k)$. This combined with Theorem \ref{jeffreykirwan} applied to $\mu_T^{-1}(0)/T=(\CC\PP^{n-1})^k$ gives us the following result.
\begin{proposition}\label{propflag} Let $V\to \flag_k(\CC^n)$ denote the tautological rank $k$ bundle over the flag manifold. Then 
\[\int_{\flag_k(\CC^n)} \phi(V)=\mathrm{Res}_{\bz=0} \frac{\phi(\bz) \prod_{1\le i<j\le k} (z_i-z_j) d\bz}{z_1^n\ldots z_k^n}.\]
\end{proposition}
In \cite{bsz} a different proof of this proposition is given (see \cite{bsz} Proposition 5.4) which uses the Atiyah-Bott localisation instead of the  localisation theorem from \cite{jeffreykirwan}. 
\end{example}

\subsection{Nonabelian localisation for non-reductive actions}

Suppose that a nonsingular connected projective variety $X$ has a well-adapted linear action of a linear algebraic group $H=U\rtimes R$ with internally graded unipotent radical $U$, satisfying H-stability=H-semistability in the strong sense of Definition \ref{def:s=ss}. Let $\Omega$ be an $H$-equivariant K\"ahler structure  on $X$
constructed using Fubini--Study K\"ahler forms 
 as in 
Example \ref{remarkFS}, where the $H$-equivariant K\"ahler structure $\Omega \subseteq \Omega_{G,Y}$ is defined using a nonsingular projective variety $Y$ acted on by a complex reductive group $G=K_\CC \geqslant H$ with Lie algebra $\lieg=\liek \oplus i\liek$ such that $Q=K\cap H$ is a maximal compact subgroup of $R$. Let $\l:\CC^* \to R$ 
be a central one-parameter subgroup  of $R$ grading $U$, with $\hat{U}=U\rtimes \CC^* \leqslant H$.  

Let $(K,\omega) \in \Omega$ and let $\newmmxh: \Omega \times X \to {\lieh}^*$ be an {$\Omega$-moment map} for the $H$-action on $X$ in the sense of Definition \ref{defn:omegaH}. Let $\mu^{H}_{(K,\omega)}: X \to {\lieh}^*$ be the   restriction 
 of $\newmmxh:\Omega \times X \to {\lieh}^*$ to $X$ identified with $\{(K,\omega)\} \times X$. The well-adaptedness of the linearisation means that if $\delta_0 >0$ is chosen as in the slice theorems of $\S$5.3 (Propositions \ref{sliceU}, \ref{sliceUhat}, \ref{sliceH}) then we can assume that $$\weight_{\min} < 0 < \weight_{\min} + \delta_0.$$
 We assume also that 
 $(\mu^{H}_{(K,\omega)})^{-1}(0) \neq \emptyset$.

Let $T^H$ be a maximal torus for $Q$ (and thus a maximal compact torus for $H$) with Lie algebra $\liet^H$, and let $U_{\max}^R$ be a maximal unipotent subgroup of $R=Q_\CC$, with Lie algebra $\lieu_{\max}^R$, which is normalised by $T^H$ and corresponds to a positive Weyl chamber $\liet_{+}^H \subseteq \liet^H$. Then $U_{\max}^H = U \rtimes U_{\max}^R$ is a maximal unipotent subgroup of $H$ which is normalised by the maximal complex torus $T^H_\CC$ of $R$ (and of $H$). Since $\l(\CC^*)$ is central in $R$ and its adjoint action on $\lieu$ has all weights strictly positive, there is a small perturbation of the point it represents in $\liet^H$ which acts on $\lieu_{\max}^H = \Lie(U_{\max}^H)$ with all weights strictly positive. This means that we can choose a maximal torus $T$ of $K$ containing $T^H$ and  a positive Weyl chamber $\liet_+ \subseteq \liet$ such that the corresponding maximal unipotent subgroup $U_{\max}$ of $G$ contains $U_{\max}^H$, the corresponding Borel subgroup $B = U_{\max} T_\CC$ contains the maximal solvable subgroup $B^H = U_{\max}^H T^R$ of $H$, and $G = KB = K \exp(i\liet) \exp(\lieu_{\max})$.

Let $\lier^+$ denote the Lie algebra of the maximal unipotent subgroup $U_{\max}^R$ of $R$, and let $\lier^-$ denote the Lie algebra of the opposite maximal unipotent subgroup of $R$. We have  
 the Cartan decomposition\[\Lie(R)=\lier^- \oplus \liet^H_\CC  \oplus \lier^+ 
 \]
 and the corresponding decomposition
$$\Lie(H) = \lieu \oplus \lier^- \oplus \liet^H_\CC  \oplus \lier^+.$$

Recall from Theorem \ref{quotientshomeoH} that 
 $H (\mu_{(K,\omega)}^{H})^{-1}(0)$ coincides with the $H$-stable locus $X^{s,H}_{\min+}$, and  the inclusion of $(\mu_{(K,\omega)}^{H})^{-1}(0)$ 
    in $X^{s,H}_{\min+}$ induces a 
     diffeomorphism of orbifolds  from $(\mu_{(K,\omega)}^{H})^{-1}(0)/\l(K\cap H)$ 
     to the GIT quotient $X \env H$.
This quotient fits into a diagram analogous to \eqref{diagrammartin}: 
\begin{equation}\label{diagrammartin2}
\xymatrix{(\mu_{(K,\omega)}^{H})^{-1}(0)/T^H  \ar@{^{(}->}[r]^-{i} \ar[d]^{\pi} & (\mu_{(K,\omega)}^{T^H})^{-1}(0)/T^H=X/\!/T^H_\CC  \\ 
 X/\!/H}
\end{equation}
where $\pi: (\mu_{(K,\omega)}^{H})^{-1}(0)/T^H \to X\env H$ is the composition of the natural map $(\mu_{(K,\omega)}^{H})^{-1}(0)/T^H \to (\mu_{(K,\omega)}^{H})^{-1}(0)/H$ with the  diffeomorphism of orbifolds   $(\mu_{(K,\omega)}^{H})^{-1}(0)/\l(K\cap H) \to X \env H$ induced by  the inclusion of $(\mu_{(K,\omega)}^{H})^{-1}(0)$ 
    in $X^{s,H}_{\min+}$.

The description of the relationships between (orbifold) tangent bundles is just as in Martin's paper \cite{SM}, using the line bundles corresponding to the weights for the action of $T^H$ on $\Lie(H)/\liet^H_\CC$.
\begin{definition}\label{def:bundles}

(i) For a weight $\a$ of $T_\CC^H$ let $\CC_\a$  denote the corresponding $1$-dimensional complex representation of $T^H$ and let 
\[L_\a:=(\mu_{(K,\omega)}^{T^H})^{-1}(0)\times_{T^H} \CC_\a \to X/\!/T^H_\CC,\]
denote the associated complex line bundle whose Euler class is denoted by $e(\a) \in H^2(X/\!/T^H_{\CC})\simeq H_{T^H}^2(X)$.  
(ii) For a complex $T^H_\CC$-module $\mathfrak{a}$  let  $V_{\mathfrak{a}}=(\mu_{(K,\omega)}^{T^H})^{-1}(0) \times_{T^H_\CC} \mathfrak{a} \to X/\!/T^H_{\CC}$
denote the corresponding complex vector bundle. Let $V^*_{\mathfrak{a}}=V_{\mathfrak{a}^*}$ denote the dual bundle.
\end{definition}
\begin{proposition}\label{propmartin} Suppose that $X$  is a nonsingular projective variety endowed with a well-adapted linear action of $H$ such that semistability coincides with stability for the action of $H$ in the strong sense of Definition \ref{def:s=ss}. 
\begin{enumerate}[(i)]
\item The vector bundle $V^*_{\lieu \oplus \lier^-} \to X/\!/T^H_\CC$ has a $C^{\infty}$ section $s$ which
is transverse to the zero section 
 and whose zero set is the submanifold
$(\mu_{(K,\omega)}^{H})^{-1}(0)/T^H \subset X/\!/T^H_\CC$, so the $T^H_{\CC}$-equivariant normal bundle is
\[\mathcal{N}(i)\simeq V^*_{\lieu \oplus \lier^-}. \]
\item Let $\mathrm{vert}(\pi) \to (\mu_{(K,\omega)}^{H})^{-1}(0)/T^H$ denote the relative tangent bundle for $\pi$. Then 
\[\mathrm{vert}(\pi)\simeq V^*_{\lier^+}|_{(\mu_{(K,\omega)}^{H})^{-1}(0)/T^H}.\]
\end{enumerate}
\end{proposition}
\proof
The argument of \cite{SM} works with minor changes. For (i) and (ii) we use the diagram
\begin{equation}\label{Hmomentmap}
\xymatrix{X \ar[r]^-{\mu_{(K,\omega)}^{G}} \ar[rd]^{\mu_{(K,\omega)}^{H}} \ar[rdd]_{\mu_{(K,\omega)}^{T^H}} &  \lieg^*=\Lie(K)^* \oplus i\Lie(K)^* \ar[d]^{p^*}  \\
& \ \ \ \ \ \  \Lie(H)^*=\lieu^* \oplus \Lie(R)^*  \ar[d]^{q^*}   \\
& \Lie(T_\CC^H)^*=\liet^H \oplus i \liet^H }
\end{equation}
where $q^*$ is the projection and $p^*$ is the dual of the inclusion
\[p: \Lie(H) \hookrightarrow \liek,\ A \mapsto A-\bar{A}^T,\]
see \S\ref{Hactions}. 
Let $W=(q^*)^{-1}(0)$ denote the complex $T^H$-invariant subspace of $\lieg^*$ sent to $0$ by $q^*$. Note that $\mu_{(K,\omega)}^{T^H}$ is a $T^H$-equivariant map and the coadjoint action of $T^H$ on $H$ preserves the subspace $W$. Therefore $W$ is isomorphic to $ \lieu^* \oplus (\lier^-)^*$. 
Then $(\mu_{(K,\omega)}^{T^H})^{-1}(0)=(\mu_{(K,\omega)}^{H})^{-1}(W)$, and the fact semistability coincides with stability for the $H$-action tells us that $0$ is a regular value for $\mu_{(K,\omega)}^{H}$, which implies that the subspace $W$ is transverse to the map $\mu_{(K,\omega)}^{H}$.  The restriction of $\mu_{(K,\omega)}^{H}$ to $(\mu_{(K,\omega)}^{T^H})^{-1}(0)$ defines an $T^H$-equivariant map $\tilde{s}: (\mu_{(K,\omega)}^{T^H})^{-1}(0)\to W$ whose quotient by $T^H$ defines a section $s$ of the associated bundle $V^*_{\lieu \oplus (\lier^-)}=(\mu_{(K,\omega)}^{T^H})^{-1}(0)\times_{T^H} W \to X/\!/T_{\CC}^H$. Since $0 \in \lieg^*$ is a regular value of $\mu_{(K,\omega)}^{H}$ it follows that $0 \in W$ is a regular value of $\tilde{s}$, and hence $s$ is transverse to the zero section, as required for (i).

(ii) follows from  Proposition 1.2 of \cite{SM}.

\qed

The next results relating the topology of $X/\!/H$ and $X/\!/T^H_\CC$ follow from Proposition \ref{propmartin} using Remark \ref{surjectivity} in the same way that Theorems A and B of \cite{SM} follow from Proposition 1.2 of \cite{SM}. In fact, just as Theorem B is proved first in \cite{SM} and then is used to prove Theorem A,  Theorem \ref{thm:integration}  follows directly from Proposition \ref{propmartin} and Theorem \ref{thm:cohomologyrings} can then be deduced
using Remark \ref{surjectivity}.
\begin{theorem}[\textbf{Cohomology rings}]\label{thm:cohomologyrings} Let $X$ be a smooth projective variety endowed with a well-adapted action of $H=U \rtimes R$ such that $H$-stability=$H$-semistability holds in the strong sense of Definition \ref{def:s=ss}. Then there is a natural ring isomorphism
\[H^*(X/\!/H,\QQ)\simeq \frac{H^*(X/\!/T^H_\CC,\QQ)^W}{ann(\mathrm{Euler}(V^*_{\lieu \oplus \lier^-\oplus \lier^+}))}.\]
Here $W$ denotes the Weyl group of $R$, which acts naturally on $X/\!/T_{\CC}^H$, while $\mathrm{Euler}(V^*_{\lieu \oplus \lier^- \oplus \lier^+}) \in H^*(X/\!/T_{\CC}^H)^W$ is the Euler class of the bundle $V_{\lieu \oplus \lier^- \oplus \lier^+}$ as in Definition \ref{def:bundles}, and  
\[\mathrm{ann}(\mathrm{Euler}(V^*_{\lieu \oplus \lier^-  \oplus \lier^+}))=\{c \in H^*(X/\!/T_{\CC}^*,\QQ)| c \cup \mathrm{Euler}(V_{\lieu \oplus \lier^- \oplus \lier^+})=0\} \subseteq H^*(X/\!/T_{\CC}^H,\QQ)\]
is the annihilator ideal. 
\end{theorem}

Again, diagram \eqref{diagrammartin2} provides a natural way to define a lift of a cohomology class on $X/\!/H$ to a class on $X/\!/T_{\CC}^H$: we say that $\tilde{a}\in H^*(X/\!/T_{\CC}^H)$ is a lift of $a\in H^*(X/\!/H)$ if $\pi^*a=i^*\tilde{a}$.

\begin{theorem}[\textbf{Integration formula}]\label{thm:integration} Let $X$ be a smooth projective variety endowed with a well-adapted action of $H=U \rtimes R$ such that $H$-stability=$H$-semistability holds in the strong sense of Definition \ref{def:s=ss}.  Given a cohomology class $a \in H^*(X/\!/H)$ with a lift $\tilde{a}\in H^*(X/\!/T_{\CC}^H)$, then 
\[\int_{X/\!/H}a=\frac{n_0}{|W|}\int_{X/\!/T^H_\CC}\tilde{a} \cup \mathrm{Euler}(V^*_{\lieu \oplus \lier^- \oplus \lier^+}),\]
where we use the notation of Theorem \ref{thm:cohomologyrings}, and $n_0 > 0$ is the ratio of the sizes of generic  stabilisers in $H$ and $T^H$ of points in $X$. 
\end{theorem}

\subsection{Localization for quotients by non-reductive groups with internally graded unipotent radicals}
Let $X$ be a smooth projective variety with a well-adapted linear action of $H=U \rtimes R$ such that $H$-stability=$H$-semistability holds in the strong sense of Definition \ref{def:s=ss}. 
We follow the notation and conventions of the previous section. We have two surjective ring homomorphisms 
\[\kappa_{T^H_\CC}: H_{T^H_\CC}^*(X) \to H^*(X/\!/T^H_\CC) \text{  and  }  \kappa_{H}: H_{R}^*(X)=H_{T^H_\CC}^*(X)^W \to H^*(X/\!/H)\]
from the equivariant cohomology of $X$ to the ordinary cohomology of the corresponding GIT quotient. The fixed points of the maximal torus $T^H_\CC$ on $X\subset \PP^n$ correspond to the weights of the $T^H_\CC$ action on $X$, and since this action is well-adapted, these weights satisfy
\[\omega_{\min}=\omega_0<0<\omega_1 <\ldots <\omega_{r}.\]

We can now combine Theorems \ref{jeffreykirwan} and \ref{thm:integration} to obtain the following result.

\begin{theorem}[\textbf{Localisation for non-reductive quotients}]
\label{jeffreykirwannonred} Let $X$ be a smooth projective variety endowed with a well-adapted linear  action of $H=U \rtimes R$ such that $H$-stability=$H$-semistability holds in the strong sense of Definition \ref{def:s=ss}.
For any 
equivariant cohomology class $\eta$ on $X$ represented by an equivariant differential form $\eta(z)$ 
 whose degree is the dimension of $X/\!/H$, we have (in the notation of Theorem \ref{jeffreykirwan} and Definitions \ref{defn:jeffreykirwan} and \ref{def:bundles})
\[\int_{X/\!/H} \kappa_H(\eta)=n_H\mathrm{JKRes}^\Lambda \left(\sum_{F \subset \mathcal{F}} \int_F \frac{i_F^* (\eta(z))\mathrm{Euler}(V_{\lieu \oplus \lier^- \oplus \lier^+})(z)}{\mathrm{Euler}^T(\mathcal{N}_F)(z)}[dz]\right).\]
In this formula $z$ is a variable in $\liet^H_\CC$ so that a $T^H$-equivariant cohomology class can be evaluated at $z$, while $\mathrm{JKRes}^\Lambda$ is the JK residue map which depends on a choice of a cone $\Lambda$ in the dual of the Lie algebra of the maximal compact subgroup of $T^H$, and $\calf$ is the set of connected components of the fixed point set of the $T^H$ action on $X$. Here $n_H$ depends only on $H$ and the size of the stabiliser in $H$ of a generic $x \in X$.
\end{theorem}

\begin{remark}
The general definition of the residue map $\mathrm{JKRes}^\Lambda$ is given in $\S$8 of \cite{jeffreykirwan}. It can be expressed as a multivariable residue whose domain is a class of meromorphic differential forms on the Lie algebra of $T^H$, and whose value on this domain is independent of the choice of the cone $\Lambda$. However in order to apply it to individual terms in the  formula given in Theorem \ref{jeffreykirwannonred}, a choice of cone $\Lambda$ has to be made which does not affect the residue of the whole sum. After this choice has been made, the contribution of any $F \in \calf$ such that $\mu_{T^H}(F)$ does not lie in the cone $\Lambda$ is zero.
\end{remark}

In the special case when $H=\hU=U \rtimes \lambda(\CC^*)$ 
 the theorem has a particularly nice form when an appropriate choice of $\Lambda$ has been made. In this case $T^H = \lambda(\CC^*)$ and its maximal compact subgroup is the circle $\lambda(S^1)$, whose Lie algebra is usually identified with $\RR$ via $x \mapsto 2\pi i x$ for $x \in \RR$. Then it is traditional to take the cone $\Lambda$ to consist of the  positive reals, which means that $F \in \calf$ only contributes to the residue formula if its corresponding weight is strictly positive (see for example the version of Theorem \ref{jeffreykirwan} for a one-dimensional torus, given by \cite{JKintersection} 
Corollary 3.2 and Remark 3.4). For a well-adapted linearisation, however, it is more convenient to take $\Lambda$ to consist of the negative reals, so that only $F_{\min} = Z_{\min}(X)$ contributes to the residue formula. This gives us a version of the special case of Theorem \ref{jeffreykirwan} for a one-dimensional torus which can be obtained from \cite{JKintersection} Corollary 3.2 by substituting $-x$ for $x$, and in combination with Theorem \ref{thm:integration} gives us the following result.

\begin{corollary}[\textbf{Localisation for $\hU$ quotients}]
\label{jeffreykirwannonredb}
Let $X$ be a smooth projective variety with a well-adapted action of $\hU=U \rtimes \lambda(\CC^*)$ such that $\hU$-stability=$\hU$-semistability holds in the sense of Definition \ref{cond star}. Let $z$ be the standard coordinate on the Lie algebra of $\CC^*$. For any 
equivariant cohomology class $\eta$ on $X$ represented by an equivariant differential form $\eta(z)$ 
whose degree is the dimension of $X/\!/\hU$ we have 
\[\int_{X/\!/\hU}\kappa_{\hU}(\eta)=n_{\hU}\mathrm{Res}_{z=0} \int_{F_{\min}}\frac{i_{F_{\min}}^* (\eta(z)\cup \mathrm{Euler}(V_\lieu)(z))}{\mathrm{Euler}^T(\mathcal{N}_{F_{\min}})(z)}dz\]
where $F_{\min}=Z_{\min}(X)$ is the union of those connected components of the fixed point locus $X^{\lambda(\CC^*)}$ on which the $\lambda(S^1)$ moment map takes its minimum value $\omega_{\min}$, and $n_{\hU}$ is a strictly positive rational number which  depends only on $\hU$ and the size of the stabiliser in $\hU$ of a generic $x \in X$. 
\end{corollary}

\bibliographystyle{abbrv}
\bibliography{BercziKirwanCohomology.bib}

\begin{thebibliography}{10}

\bibitem{arnold}
V.~I. Arnold, V.~V. Goryunov, O.~V. Lyashko, and V.~A. Vasilliev.
\newblock {\em Singularity theory. {I}}.
\newblock Springer-Verlag, Berlin, 1998.

\bibitem{ad07}
A.~Asok and B.~Doran.
\newblock On unipotent quotients and some {$\Bbb A^1$}-contractible smooth
  schemes.
\newblock {\em Int. Math. Res. Pap.}, 51, 2007.

\bibitem{atiyah}
M.~F. Atiyah.
\newblock Convexity and commuting {H}amiltonians.
\newblock {\em Bulletin of the London Mathematical Society}, 14(1):1--15, 1982.

\bibitem{atiyahbott}
M.~F. Atiyah and R.~Bott.
\newblock The moment map and equivariant cohomology.
\newblock {\em Topology}, 23(1):1--28, 1984.

\bibitem{berczithom}
G.~B\'erczi.
\newblock Non-reductive geometric invariant theory and {T}hom polynomials.
\newblock arXiv:2012.06425.

\bibitem{berczitau2}
G.~B\'erczi.
\newblock Tautological integrals on {H}ilbert scheme of points {I}.
\newblock arXiv:2303.14807.

\bibitem{berczitau3}
G.~B\'erczi.
\newblock Tautological integrals on {H}ilbert scheme of points {II}:
  {G}eometric subsets.
\newblock arXiv:2303.14812.

\bibitem{berczitau4}
G.~B\'erczi.
\newblock Tautological integrals on {H}ilbert scheme of points {III}:
  Applications.
\newblock In preparation.

\bibitem{b0}
G.~B\'erczi.
\newblock Tautological integrals on curvilinear {H}ilbert schemes.
\newblock {\em Geom. Topol.}, 21(5):2897--2944, 2017.

\bibitem{BDHK0}
G.~B{\'e}rczi, B.~Doran, T.~Hawes, and F.~Kirwan.
\newblock Projective completions of graded unipotent quotients.
\newblock arXiv:1607.04181.

\bibitem{BDHK}
G.~B\'erczi, B.~Doran, T.~Hawes, and F.~Kirwan.
\newblock Constructing quotients of algebraic varieties by linear algebraic
  group actions.
\newblock In {\em Handbook of Group Actions}, Advanced Lectures in Mathematics.
  Higher Education Press and International Press, 2018.

\bibitem{BDHK2}
G.~B\'erczi, B.~Doran, T.~Hawes, and F.~Kirwan.
\newblock Geometric invariant theory for graded unipotent groups and
  applications.
\newblock {\em Journal of Topology}, 11(3):826--855, 2018.

\bibitem{BJK}
G.~B\'erczi, J.~Jackson, and F.~Kirwan.
\newblock Variation of non-reductive geometric invariant theory.
\newblock {\em Surveys in Differential Geometry}, 22:49--69, 2017.

\bibitem{bkGGL}
G.~B\'erczi and F.~Kirwan.
\newblock Non-reductive geometric invariant theory and hyperbolicity.
\newblock arXiv:1909.11417, to appear in Invent. Math.

\bibitem{toappear}
G.~B{\'e}rczi and F.~Kirwan.
\newblock Non-reductive {H}ilbert--{M}umford, {K}empf--{N}ess and slice
  theorems.
\newblock in preparation.

\bibitem{bsz3}
G.~B\'erczi and A.~Szenes.
\newblock Multiple-point residue formulas for holomorphic maps.
\newblock arXiv:2112.15502.

\bibitem{bsz}
G.~B{\'e}rczi and A.~{Sz}enes.
\newblock Thom polynomials of {M}orin singularities.
\newblock {\em Ann. of Math. (2)}, 175(2):567--629, 2012.

\bibitem{BB}
A.~Bialynicki-Birula.
\newblock Some theorems on actions of algebraic groups.
\newblock {\em Ann. Math.}, 2:480--497, 1973.

\bibitem{brotbek}
D.~Brotbek.
\newblock On the hyperbolicity of general hypersurfaces.
\newblock {\em Publ. Math. Inst. Hautes Etudes Sci.}, 126:1--34, 2017.

\bibitem{dem}
J.-P. Demailly.
\newblock Algebraic criteria for {K}obayashi hyperbolic projective varieties
  and jet differentials.
\newblock In {\em Algebraic geometry---{S}anta {C}ruz 1995}, volume~62 of {\em
  Proc. Sympos. Pure Math.}, pages 285--360. Amer. Math. Soc., Providence, RI,
  1997.

\bibitem{demsurvey}
J.-P. Demailly.
\newblock Recent results on the {K}obayashi and {G}reen-{G}riffiths-{L}ang
  conjectures.
\newblock {\em Contribution to the 16th Takagi lectures in celebration of the
  100th anniversary of K.Kodaira}, 2018.

\bibitem{deng}
Y.~Deng.
\newblock {On the Diverio-Trapani Conjecture}.
\newblock {\em {Annales Scientifiques de l'{\'E}cole Normale Sup{\'e}rieure}},
  2018.

\bibitem{dk08}
H.~Derksen and G.~Kemper.
\newblock Computing invariants of algebraic groups in arbitrary characteristic.
\newblock {\em Advances in Mathematics}, 217:2089--2129, 2008.

\bibitem{dmr}
S.~Diverio, J.~Merker, and E.~Rousseau.
\newblock Effective algebraic degeneracy.
\newblock {\em Invent. Math.}, 180(1):161--223, 2010.

\bibitem{dr}
S.~Diverio and E.~Rousseau.
\newblock {\em A survey on hyperbolicity of projective hypersurfaces}.
\newblock IMPA Mathematical Publications. Instituto de Matem\'atica Pura e
  Aplicada (IMPA), Rio de Janeiro, 2011.
\newblock [On the title page: A survey on hiperbolicity of projective
  hypersurfaces].

\bibitem{dix96}
J.~Dixmier.
\newblock {\em Enveloping algebras}.
\newblock Graduate Studies in Mathematics. American Mathematical Society,
  Providence, RI, 1996.
\newblock Revised reprint of the 1977 translation.

\bibitem{Dolg}
I.~Dolgachev.
\newblock {\em Lectures on invariant theory}, volume 296 of {\em London
  Mathematical Society Lecture Note Series}.
\newblock Cambridge University Press, 2003.

\bibitem{dorankirwan}
B.~Doran and F.~Kirwan.
\newblock Towards non-reductive geometric invariant theory.
\newblock {\em Pure Appl. Math. Q.}, 3(1, part 3):61--105, 2007.

\bibitem{DuisHeckman}
J.~Duistermaat and G.~Heckman.
\newblock On the variation in the cohomology of the symplectic form of the
  reduced phase space.
\newblock {\em Invent. Math.}, 69:259--268, 1982.

\bibitem{F2}
A.~Fauntleroy.
\newblock Categorical quotients of certain algebraic group actions.
\newblock {\em Illinois J. Math.}, 27(1):115--124, 03 1983.

\bibitem{F1}
A.~Fauntleroy.
\newblock Geometric invariant theory for general algebraic groups.
\newblock {\em Compositio Mathematica}, 55(1):63--87, 1985.

\bibitem{fujiki}
A.~Fujiki.
\newblock On automorphism groups of compact {K}\"ahler manifolds.
\newblock {\em Invent. Math.}, page 225–258, 1978.

\bibitem{gottsche}
L.~G\"ottsche.
\newblock A conjectural generating function for numbers of curves on surfaces.
\newblock {\em Comm.Math. Phys.}, 196(3):523--533, 1998.

\bibitem{grebmiebach}
D.~Greb and C.~Miebach.
\newblock Hamiltonian actions of unipotent groups on compact {K}\"ahler
  manifolds.
\newblock {\em \'Epijournal de G\'eom\'etrie Alg\'ebrique}, 2, 2018.
\newblock epiga:4962.

\bibitem{gg}
M.~Green and P.~Griffiths.
\newblock Two applications of algebraic geometry to entire holomorphic
  mappings.
\newblock In {\em The {C}hern {S}ymposium 1979 ({P}roc. {I}nternat. {S}ympos.,
  {B}erkeley, {C}alif., 1979)}, pages 41--74. Springer, New York-Berlin, 1980.

\bibitem{GP1}
G.-M. Greuel and G.~Pfister.
\newblock Geometric quotients of unipotent group actions.
\newblock {\em Proc. Lond. Math. Soc.}, 67(3):75--105, 1993.

\bibitem{GP2}
G.-M. Greuel and G.~Pfister.
\newblock Geometric quotients of unipotent group actions.
\newblock In {\em Singularities (Oberwolfach 1996)}, volume 162 of {\em
  Progress in Math}, pages 27--36. Birkhauser, Basel, 1998.

\bibitem{gro76}
F.~Grosshans.
\newblock Localization and invariant theory.
\newblock {\em Advances in Mathematics}, 21:50--60, 1976.

\bibitem{implosion}
V.~Guillemin, L.~Jeffrey, and R.~Sjamaar.
\newblock Symplectic implosion.
\newblock {\em Transformation Groups}, 7:155--185, 2001.

\bibitem{guilleminkalkman}
V.~Guillemin and J.~Kalkman.
\newblock The {J}effrey-{K}irwan localization theorem and residue operations in
  equivariant cohomology.
\newblock {\em J. Reine Angew. Math.}, 470:123--142, 1996.

\bibitem{hamilton}
E.~Hamilton.
\newblock Stratifications and quasi-projective coarse moduli spaces for the
  stack of {H}iggs bundles.
\newblock arXiv:1911.13194.

\bibitem{HHSWZ}
W.~Hazod, K.~H. Hofmann, H.-P. Scheffler, M.~W\"ustner, and H.~Zeuner.
\newblock Normalizers of compact subgroups, the existence of commuting
  automorphisms, and applications to operator semistable measures.
\newblock {\em Journal of Lie theory}, 8:189--209, 1998.

\bibitem{jeffreykogan}
L.~Jeffrey and M.~Kogan.
\newblock Localization theorems by symplectic cuts.
\newblock In {\em The Breadth of Symplectic and Poisson Geometry}, volume 232
  of {\em Progr. Math.}, pages 303--326. Birkh\"aser, Boston, MA, 2005.

\bibitem{jeffreykirwan}
L.~C. Jeffrey and F.~C. Kirwan.
\newblock Localization for nonabelian group actions.
\newblock {\em Topology}, 34(2):291 -- 327, 1995.

\bibitem{JKintersection}
L.~C. Jeffrey and F.~C. Kirwan.
\newblock Intersection theory on moduli spaces of holomorphic bundles of
  arbitrary rank on a {R}iemann surface.
\newblock {\em Annals of Mathematics}, 148(1):109--196, 1998.

\bibitem{katz}
S.~Katz.
\newblock Iteration of multiple point formulas and applications to conics.
\newblock {\em Lecture Notes in Math.}, 1311:147--155, 1988.

\bibitem{kazarian}
M.~Kazarian.
\newblock Multisingularities, cobordisdms and enumerative geometry.
\newblock {\em Russian Mathematical Surveys}, 58(4), 2003.

\bibitem{kazarian2}
M.~Kazarian.
\newblock Thom polynomials for {L}agrange, {L}egendre, and critical point
  function singularities.
\newblock {\em Proc. London Math. Soc. (3)}, 86(3):707--734, 2003.

\bibitem{francesthesis}
F.~Kirwan.
\newblock Cohomology of quotients in symplectic and algebraic geometry.
\newblock In {\em Mathematical Notes}, number~31. Princeton University Press,
  Princeton, NJ, 1984.

\bibitem{K2}
F.~Kirwan.
\newblock Partial desingularisations of quotients of nonsingular varieties and
  their {B}etti numbers.
\newblock {\em Annals of Mathematics}, 122(1):41--85, 1985.

\bibitem{KPEN}
F.~Kirwan.
\newblock Quotients by non-reductive algebraic group actions.
\newblock In {\em Moduli Spaces and Vector Bundles}. Cambridge University
  Press, 03 2008.

\bibitem{kleiman1}
S.~L. Kleiman.
\newblock Multiple-point formulas {I}: Iteration.
\newblock {\em Acta Math.}, 147:13--49, 1981.

\bibitem{kleiman2}
S.~L. Kleiman.
\newblock Multiple-point formulas {II}: The {H}ilbert scheme.
\newblock In S.~Xamb{\'o}-Descamps, editor, {\em Enumerative Geometry}, pages
  101--138, Berlin, Heidelberg, 1990. Springer Berlin Heidelberg.

\bibitem{kob2}
S.~Kobayashi.
\newblock {\em Hyperbolic manifolds and holomorphic mappings}, volume~2 of {\em
  Pure and Applied Mathematics}.
\newblock Marcel Dekker, Inc., New York, 1970.

\bibitem{lang}
S.~Lang.
\newblock Hyperbolic and {D}iophantine analysis.
\newblock {\em Bull. Amer. Math. Soc. (N.S.)}, 14(2):159--205, 1986.

\bibitem{MarsdenWeinstein}
J.~Marsden and A.~Weinstein.
\newblock Reduction of symplectic manifolds with symmetry.
\newblock {\em Reports on Mathematical Physics}, 5(1):121--130, 1974.

\bibitem{SM}
S.~Martin.
\newblock Symplectic quotients by a nonabelian group and by its maximal torus.
\newblock arxiv:0001.002.

\bibitem{mcquillan}
M.~McQuillan.
\newblock Diophantine approximations and foliations.
\newblock {\em Inst. Hautes \'Etudes Sci. Publ. Math.}, 87:121--174, 1998.

\bibitem{merker2}
J.~Merker and T.-A. Ta.
\newblock Degrees {$(d\ge \sqrt{n}\log n)^n$} and {$d\ge (n \log n)^ n$} in the
  conjectures of {G}reen-{G}riffiths and of {K}obayashi.
\newblock arXiv:1901.04042.

\bibitem{GIT}
D.~Mumford, J.~Fogarty, and F.~Kirwan.
\newblock {\em Geometric invariant theory}, volume~34 of {\em Ergebnisse der
  Mathematik und ihrer Grenzgebiete (2) [Results in Mathematics and Related
  Areas (2)]}.
\newblock Springer-Verlag, Berlin, third edition, 1994.

\bibitem{nakajima}
H.~Nakajima.
\newblock {\em Lectures on {H}ilbert schemes of points on surfaces}, volume~18
  of {\em University Lecture Series}.
\newblock American Mathematical Society, Providence, RI, 1999.

\bibitem{Ness}
L.~Ness.
\newblock A stratification of the null cone via the moment map.
\newblock {\em American Journal of Mathematics}, 106:1281, 1984.

\bibitem{Newstead}
P.~Newstead.
\newblock {\em Introduction to moduli problems and orbit spaces}, volume~17 of
  {\em Tata Institute of Fundamental Research Publications}.
\newblock 2011.

\bibitem{yikun}
Y.~Qiao.
\newblock {GIT} for $\hat{U}$-actions on algebraic $\mathbf{C}$-schemes.
\newblock arXiv:2204.13884.

\bibitem{riedl}
E.~Riedl and D.~Yang.
\newblock Applications of a {G}rassmannian technique in hypersurfaces.
\newblock {\em J. Algebraic Geom.}, 2021.

\bibitem{rimanyi}
R.~Rim{\'a}nyi.
\newblock Thom polynomials, symmetries and incidences of singularities.
\newblock {\em Invent. Math.}, 143(3):499--521, 2001.

\bibitem{rimanyi2}
R.~Rim\'anyi.
\newblock Multiple-point formulas - a new point of view.
\newblock {\em Pacific Journal of Mathematics}, 202:475--489, 02 2002.

\bibitem{ser58}
J.-P. Serre.
\newblock Espaces fibr\'es alg\'ebriques.
\newblock {\em S\'eminaire {C}laude {C}hevalley}, 3, 1958.
\newblock talk:1.

\bibitem{siu1}
Y.-T. Siu.
\newblock Some recent transcendental techniques in algebraic and complex
  geometry.
\newblock In {\em Proceedings of the {I}nternational {C}ongress of
  {M}athematicians, {V}ol. {I} ({B}eijing, 2002)}, pages 439--448. Higher Ed.
  Press, Beijing, 2002.

\bibitem{siu2}
Y.-T. Siu.
\newblock Hyperbolicity in complex geometry.
\newblock In {\em The legacy of {N}iels {H}enrik {A}bel}, pages 543--566.
  Springer, Berlin, 2004.

\bibitem{siu4}
Y.-T. Siu.
\newblock Hyperbolicity of generic high-degree hypersurfaces in complex
  projective space.
\newblock {\em Invent. Math.}, 202(3):1069--1166, 2015.

\bibitem{siu3}
Y.-T. Siu and S.-k. Yeung.
\newblock Hyperbolicity of the complement of a generic smooth curve of high
  degree in the complex projective plane.
\newblock {\em Invent. Math.}, 124(1-3):573--618, 1996.

\bibitem{Sjamaar}
R.~Sjamaar and E.~Lerman.
\newblock Stratified symplectic spaces and reduction.
\newblock {\em Ann. Math.}, 134(2):375--422, 1991.

\bibitem{szenes}
A.~Szenes.
\newblock Iterated residues and multiple {B}ernoulli polynomials.
\newblock {\em Internat. Math. Res. Notices}, 18:937--956, 1998.

\bibitem{szilagyi}
Z.~Szil\'agyi.
\newblock Computation of {J}effrey-{K}irwan residue using {G}r\"obner basis.
\newblock {\em Journal of Symbolic Computation}, 79:327--341, 2017.

\bibitem{Thaddeus}
M.~Thaddeus.
\newblock Geometric invariant theory and flips.
\newblock {\em J. AMS}, pages 691--729, 1996.

\bibitem{thom}
R.~Thom.
\newblock Les singularit\'es des applications diff\'erentiables.
\newblock In {\em S\'eminaire {B}ourbaki, {V}ol.\ 3}, pages Exp.\ No.\ 134,
  357--369. Soc. Math. France, Paris, 1995.

\bibitem{vakil}
R.~Vakil.
\newblock Murphy's law in algebraic geometry: badly-behaved deformation spaces.
\newblock {\em Invent. Math.}, 164(3):569--590, 2006.

\bibitem{vergne}
M.~Vergne.
\newblock A note on the {J}effrey-{K}irwan-{W}itten localisation formula.
\newblock {\em Topology}, 35(1):243 -- 266, 1996.

\bibitem{W}
J.~Winkelmann.
\newblock Invariant rings and quasiaffine quotients.
\newblock {\em Math. Z.}, 244:163--174, 2003.

\end{thebibliography}

\end{document}